\renewcommand{\int}{\operatorname{int}} 
\newcommand{\Ker}{\operatorname{Ker}}
\newcommand{\Arf}{\mathrm{Arf}}
\newcommand\W{{\sf W}} 
\newcommand\sL{{\sf L}}
\newcommand\sD{{\sf D}}
\newcommand\sK{{\sf K}}
\newcommand{\N}{\mathbb{N}} 
\newcommand{\bW}{\mathbb{W}}
\newcommand{\cG}{\mathcal{G}}
\newcommand{\cT}{\mathcal{T}} 
\newcommand{\cW}{\mathcal{W}}
\newcommand{\hra}{\hookrightarrow} 
\newcommand{\imra}{\looparrowright} 
\newcommand{\sra}{\twoheadrightarrow}
\newcommand{\iinfty}{{\mathchoice
{\begin{minipage}{.15in}\includegraphics[width=.12in]{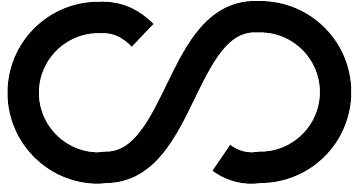}\end{minipage}}
{\begin{minipage}{.10in}\includegraphics[width=.10in]{Figures/infty2.pdf}\end{minipage}}
{\begin{minipage}{.08in}\includegraphics[width=.08in]{Figures/infty2.pdf}\end{minipage}}
{\begin{minipage}{.08in}\includegraphics[width=.08in]{Figures/infty2.pdf}\end{minipage}}
}}
\newtheorem{thm}{Theorem}[section]
\newtheorem{lem}[thm]{Lemma}
\newtheorem{defn}[thm]{Definition}
\newtheorem{rem}[thm]{Remark}
\newtheorem{cor}[thm]{Corollary}
\newtheorem{conj}[thm]{Conjecture}
\newtheorem{prop}[thm]{Proposition}
\newcommand{\introthmname}{}
\newtheorem{introthminn}{\introthmname}
\newenvironment{introthm}[1]
  {\renewcommand{\introthmname}{#1}\begin{introthminn}}
  {\end{introthminn}}
\theoremstyle{plain}
\theoremstyle{definition}
\newcommand{\Z}{\mathbb{Z}}
\newcommand{\cC}{\mathcal{C}}
\newcommand{\cI}{\mathcal{I}}
\title{Clasper Concordance, Whitney towers \\ and repeating Milnor invariants}
\author[J. Conant]{James Conant}
\email{jim.conant@gmail.com}
\address{GIA, Carlsbad, CA}
\author[R. Schneiderman]{Rob Schneiderman}
\email{robert.schneiderman@lehman.cuny.edu}
\address{Dept. of Mathematics, Lehman College, City University of New York, Bronx, NY}
\author[P. Teichner]{Peter Teichner}
 \email{teichner@mac.com}
\address{Max-Planck-Institut f\"ur Mathematik, Bonn, Germany}
\begin{document}
\maketitle

\begin{abstract}
We show that for each $k\in \N$, a link $L\subset S^3$ bounds a degree $k$ Whitney tower in the 4-ball if and only if it is \emph{$C_k$-concordant} to the unlink. This means that $L$ is obtained from the unlink by a finite sequence of concordances and degree $k$ clasper surgeries. In our construction the trees associated to the Whitney towers coincide with the trees associated to the claspers.

As a corollary to our previous obstruction theory for Whitney towers in the 4-ball, it follows that the $C_k$-concordance filtration of links is classified in terms of Milnor invariants, higher-order Sato-Levine and Arf invariants.

Using a new notion of $k$-repeating twisted Whitney towers, we also classify a natural generalization of the notion of link homotopy, called twisted \emph{self $C_k$-concordance}, in terms of $k$-repeating Milnor invariants and $k$-repeating Arf invariants. 
\end{abstract}



\section*{Introduction}\label{sec:intro}

Two links $L,L'\subset S^3$ are \emph{clasper concordant} if $L$ is related to $L'$ by a finite sequence of concordances (in $S^3 \times [0,1]$) and clasper surgeries (in $S^3$). For the latter we only allow {\em simple} claspers associated to {\em trivalent trees}, see Section~\ref{sec:claspers}. Recall also that a link $L\subset S^3$ \emph{bounds a Whitney tower} if the components of $L$ bound generic disks in $B^4$ together with iterated stages of Whitney disks, pairing the arising intersections. The unpaired intersections in the top layers of the Whitney tower are again organized by trivalent trees, see Section~\ref{sec:w-towers}. 

\begin{introthm}{Theorem}\label{thm:clasper-concordance}

A link $L\subset S^3$ bounds a Whitney tower $\cW$ with associated trees $t(\cW)$ if and only if $L$ is clasper concordant to the unlink with $t(\cC)=t(\cW)$, where $t(\cC)$ denotes the trees associated to the clasper surgeries $\cC=(C_1,\dots,C_r)$ that are part of the clasper concordance. 

\end{introthm}
The ``only if'' direction of this theorem is new and was previously announced and (used!) in \cite[Thm. 3.1]{CST7}. It will follow from the more precise Theorems~\ref{thm:w-tower-forest-implies-clasper-forest} and~\ref{thm:clasper-forest-implies-w-tower-forest}.
 
The effect of Goussarov--Habiro clasper surgery \cite{GGP}, \cite[Fig.34]{Hab00} on links corresponds to ``iterated Bing doubling along a tree'' introduced by Tim Cochran \cite{C} to realize Milnor invariants. We have  shown in \cite{CST,CST1} that if $L$ is the result of clasper surgery on the unlink along a collection $\cC$ of tree claspers, then $L$ bounds a Whitney tower $\cW$ into the $4$--ball such that the associated collections of trees $t(\cW)$ and $t(\cC)$ are identical (Figure~\ref{3-component-link-clasper-tower-fig}), implying the ``if'' direction of Theorem~\ref{thm:clasper-concordance}. The proof of the new ``only if'' direction involves decomposing a given $\cW$ into local Whitney towers around each tree in $t(\cW)$ which are bounded by iterated Bing doubles and can be expressed as clasper surgeries and concordances. 

\begin{figure}[ht!]
        \centerline{\includegraphics[scale=.35]{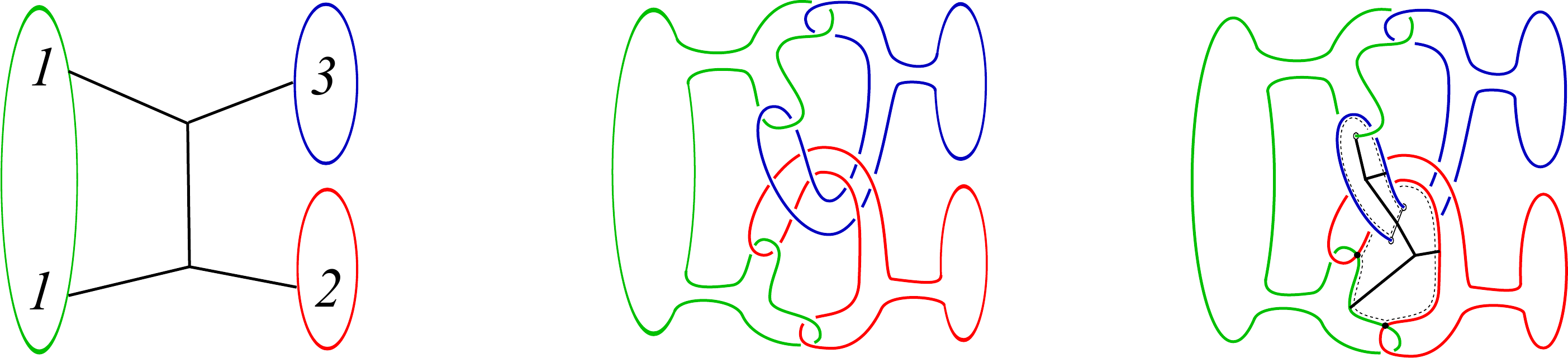}}
        \caption{Left: A tree $^3_1>\!\!\!-\!\!\!\!-\!\!\!\!\!-\!\!\!<^{\,2}_{\,1}\,=t(C)$ associated to a clasper $C$ on the $3$-component unlink. Center: The link $L$ resulting from surgery along $C$. Right: A Whitney tower $\cW$ bounded by $L$ with $t(\cW)=\,^3_1>\!\!\!-\!\!\!\!-\!\!\!\!\!-\!\!\!<^{\,2}_{\,1}$ can be constructed from the track of a null-homotopy of $L$ which changes two red-green crossings.}
        \label{3-component-link-clasper-tower-fig}
\end{figure}

Two links are \emph{$C_k$-equivalent} (cf.~Habiro--Goussarov) if they are connected by a finite sequence of simple tree clasper surgeries of degree $\geq k$, where the \emph{degree} of a clasper is half the number of vertices of the associated tree.
It has been conjectured by Habiro and Goussarov that finite type $\leq k$ invariants classify string links up to $C_k$-equivalence, as has been proved for knots \cite{Gu,Hab00}. 

Two links are \emph{$C_k$-concordant} if they are connected by a finite sequence of $C_k$-equivalences and concordances. In other words, the links are clasper concordant in a way that each arising clasper has degree $\geq k$. This notion was introduced by Meilhan and Yasuhara in \cite{MY}.

Similarly, a Whitney tower has degree $k$ if all its associated trees are of degree $\geq k$, see Definition~\ref{def:w-tower-order}. The following result holds for all $k\in\N$.

\begin{introthm}{Corollary}\label{cor:order-n-w-tower-equals-Ck-conc}

A link is $C_k$-concordant to the unlink iff it bounds a  Whitney tower of degree~$k$.
\end{introthm}
It remains an open problem whether two links $L$ and $L'$ are $C_k$-concordant provided they cobound a Whitney tower concordance of degree $k$. Our argument requires the unlink on one side of the Whitney concordance.

To connect more directly to our previous results, we will now switch to \emph{order} for measuring the complexity of a Whitney tower (section~\ref{subsec:w-tower-order}).  The order of a tree is the number of trivalent vertices, hence order is just one less than degree. For example, if a knot $K\subset S^3$ bounds an immersed disk $\Delta\imra B^4$ together with framed, immersed Whitney disks for all self-intersections of $\Delta$, these data constitute a Whitney tower of order 1, or degree 2, and the associated trees would correspond to interior intersections between the Whitney disks and $\Delta$, with each tree having the shape of a Y. Such a Whitney tower exists if and only if Arf$(K)=0$.

In \cite[Thm.1.1]{CST1} we showed that a link $L$ bounds an order $n$ Whitney tower if and only if $L$ has all vanishing Milnor invariants, higher-order Sato--Levine invariants and higher-order Arf invariants through order $n$. Here order is two less than \emph{length} in traditional Milnor invariant terminology (see section~\ref{subsec:intro-Milnor-review}), and the new Sato--Levine invariants appear in all odd orders 
as certain projections of Milnor invariants in the next higher even order 
\cite[Sec.5]{CST1}.
A version of the higher-order Arf invariants will be defined in Section~\ref{subsec:intro-higher-order-arf}.

Combining \cite[Thm.1.1]{CST1} with Corollary~\ref{cor:order-n-w-tower-equals-Ck-conc}, we get the following algebraic characterization of $C_{k}$-concordance to the unlink:

\begin{introthm}{Corollary}\label{cor:Milnor-SL-Arf-equals-tower-equals-Ck}
For any $k\in\N$ and any link $L\subset S^3$, the following statements are equivalent: 
\begin{enumerate}
\item
$L$ is $C_{k}$-concordant to the unlink.
\item
$L$ bounds a Whitney tower of order $(k-1)$ (aka degree $k$).
\item
$L$ has vanishing Milnor, Sato--Levine and Arf invariants in orders $< k$.
\end{enumerate}
\end{introthm}

In fact, the results of \cite{CST1,CST2} led to the discovery that the notion of order $k$ \emph{twisted} Whitney towers provided a complexity that better expressed the geometry of Milnor invariants, and in particular the higher-order Sato-Levine invariants are not needed.
As detailed in Section~\ref{sec:w-towers}, twisted Whitney towers of order $k$ are allowed to contain certain Whitney disks of order $\geq k/2$ which are not framed,
and in Section~\ref{sec:claspers} we define analogous \emph{twisted claspers} along with a corresponding notion of \emph{twisted $C_k$-concordance}. The leaves of a {\em twisted} clasper on a link $L$ are all zero-framed meridians to the components of $L$ except for one leaf which bounds an embedded disk in the complement of $L$ which has non-zero framing. The special $\iinfty$-trees associated to twisted Whitney disks and twisted claspers are also covered by
Theorem~\ref{thm:clasper-concordance}, as explained in Section~\ref{sec:Ck-conc-and-w-towers}.  
Combining Theorem~\ref{thm:clasper-concordance} with \cite[Cor.1.16]{CST1}, we have:

\begin{introthm}{Corollary}\label{cor:Milnor-Arf-equals-twisted-tower-equals-twisted-Ck}
For any $k\in\N$ and any link $L\subset S^3$, the following statements are equivalent: 
\begin{enumerate}
\item
$L$ is twisted $C_{k}$-concordant to the unlink.
\item
$L$ bounds a twisted Whitney tower of order $(k-1)$.
\item
$L$ has vanishing Milnor and Arf invariants in orders $< k$.
\end{enumerate}
\end{introthm}


\textbf{Twisted self $C_k$-concordance, $k$-repeating Milnor invariants.}
A \emph{self $C_k$-equivalence} is a $C_k$-equivalence which only allows surgery on claspers whose leaves are all meridians to the \emph{same} link component \cite{SY}. That is, each tree $t$ associated to a clasper in a self $C_k$-equivalence is \emph{mono-labeled}, meaning that all univalent vertices of $t$ have the same label (but two trees associated to two different claspers in the same $C_k$-equivalence can each be mono-labeled by a different label). 

A \emph{self $C_k$-concordance} is a finite sequence of self $C_k$-equivalences and concordances. 
Self $C_k$-equivalence and self $C_k$-concordance both generalize Milnor's notion of \emph{link homotopy}. A link homotopy is realized by a sequence of self-crossing changes which is exactly a self $C_1$-equivalence. Moreover, since concordance implies link homotopy \cite{Gi,Go} we see that the notions of self $C_1$-equivalence, self $C_1$-concordance and link homotopy all coincide. 

 Recall that for an $m$-component link $L$, the multi-indices $\cI$ which parametrize the Milnor concordance invariants $\mu_\cI(L)$ take their entries from the set $\{1,2,\ldots,m\}$ indexing the components of $L$. For a multi-index $\cI$, denote by $r_i(\cI)$ the number of occurrences of the label $i$ in $\cI$, and denote by $r(\cI)$ the maximum of $r_i(\cI)$ over the index set. Then $\mu_\cI(L)$ is a \emph{$k$-repeating Milnor invariant} if $r(\cI)=k$. The case $k=1$ recovers the original ``non-repeating'' link-homotopy $\mu$-invariants, with each label occurring at most once in $\cI$. 

The $k$-repeating $\mu_\cI$ are invariants of self $C_k$-equivalence by \cite{FY}, and hence also of self $C_k$-concordance. Milnor showed that a link is self $C_1$-equivalent (link homotopic) to the unlink if and only if all of its $1$-repeating $\mu$-invariants vanish \cite{M1}, and Yasuhara showed that a link is self $C_2$-equivalent (also called \emph{self $\Delta$-equivalent}) to the unlink if and only if all of its $1$-repeating and $2$-repeating $\mu$-invariants vanish \cite{Ya1}.

Understanding self $C_k$-equivalence in general is a difficult problem, due to the multitude of finite type isotopy invariants which are not Milnor invariants, but it is natural to ask about the role of $k$-repeating Milnor invariants in characterizing self $C_k$-concordance.

Self $C_2$-concordance was studied in \cite{Shib1,Shib2} (where it was called $\Delta$-cobordism), and the classification of string links up to self $C_2$-concordance was given by Yasuhara in \cite{Ya2}. In \cite[Thm.6.1]{MY} Meilhan and Yasuhara classified self $C_3$-concordance of $2$-component string links in terms of the classical Arf invariants of the components, the $k$-repeating $\mu$-invariants for $k\leq 3$, and the reductions modulo $2$ of the $4$-repeating $\mu$-invariants (which are an example of the higher-order Sato-Levine invariants from Corollary~\ref{cor:Milnor-SL-Arf-equals-tower-equals-Ck}).
However, in the setting of \emph{twisted} self $C_k$-concordance (twisted $C_k$-concordance restricted to mono-labeled claspers, see section~\ref{subsec:twisted-(self)-Ck-conc-def}) the Sato-Levine invariants do not appear, and from Theorem~\ref{thm:clasper-concordance}, together with a $k$-repeating analogue of Corollary~\ref{cor:Milnor-Arf-equals-twisted-tower-equals-twisted-Ck} in the form of Corollary~\ref{cor:k-repeating-mu-arf-classify-k-repeating-twisted}, we have for any $k\in \N$:

\begin{introthm}{Theorem}\label{thm:twisted-self-Ck-equals-Milnor-Arf-equals-tower}
For an $m$-component link $L$ the following statements are equivalent: 
\begin{enumerate}
\item
$L$ is twisted self $C_k$-concordant to the unlink.
\item
$L$ bounds a $k$-repeating twisted Whitney tower of degree $m\cdot k$.
\item
For each $r$, $1\leq r \leq k$, $L$ has vanishing $r$-repeating Milnor  and
Arf invariants.
\end{enumerate}

\end{introthm}
Theorem~\ref{thm:twisted-self-Ck-equals-Milnor-Arf-equals-tower} is proved in Section~\ref{sec:proof-thm:twisted-self-Ck-equals-Milnor-Arf-equals-tower}, where we introduce twisted $k$-repeating Whitney towers and show that the associated order-raising obstruction theory corresponds to the $k$-repeating Milnor invariants and  $k$-repeating Arf invariants. Note that implicitly this order is bounded, which is easiest expressed in terms of length: If each of the $m$ indices repeats exactly $k$ times then the length of the last possibly non-trivial invariant is $m \cdot k$, or equivalently of degree $m \cdot k-1$. This is also the last invariant that vanishes for links which bound $k$-repeating twisted Whitney towers of degree $m \cdot k$.\vspace{4mm}

{\bf Acknowledgments:} This paper was partially written while the first two authors were visiting the third author at the Max-Planck-Institut f\"ur Mathematik in Bonn. They all thank MPIM for its stimulating research environment. The second author was supported by a Simons Foundation \emph{Collaboration Grant for Mathematicians}.

\tableofcontents

\section{Trees}\label{sec:trees}

This section fixes notation and terminology for the decorated trees that will be associated to
Whitney towers, claspers, Lie brackets and iterated commutators.

In this paper a \emph{tree} is a finite unitrivalent tree, equipped with cyclic orientations at trivalent vertices, and with each univalent vertex (usually)
labeled by an element from the index set $\{1,2,3,\ldots,m\}$. A \emph{rooted tree} has a designated preferred univalent vertex called the \emph{root}. The root of a rooted tree is usually indicated by being the only univalent vertex which is not labeled by an element of the index set.
A \emph{twisted tree}, or $\iinfty$-tree, has one univalent vertex labeled by the \emph{twist} symbol~``$\iinfty$'' and all other univalent vertices labeled from from the index set (so
a twisted tree is gotten by labeling the root of a rooted tree by $\iinfty$).

When the context is clear, the word ``trees'' may refer to all these types of trees.

The adjectives ``un-rooted'', or ``non-$\iinfty$'', may also be occasionally applied to trees for emphasis.

Un-rooted non-$\iinfty$ trees are sometimes referred to as \emph{framed} trees.

(The terminology ``twisted'' and ``framed'' will be explained by the association of trees with Whitney disks and their intersections -- see the paragraph after Definition~\ref{def:split-w-tower}.)

The following definition applies to all three of the above types of trees: framed, twisted, and rooted.
\begin{defn}\label{def:tree-order}
The \emph{order} of a tree is defined to be the number of trivalent vertices.
\end{defn}\label{def:tree-order}

We adopt the convention that the trivalent orientations of trees illustrated in figures are induced by a fixed orientation of the plane.

\begin{defn}\label{def:Tree-ops}
Let $I$ and $J$ be two rooted trees.
\begin{enumerate} 
\item\label{def-item:rooted-product}
 The \emph{rooted product} $(I,J)$ is the rooted tree gotten
by identifying the root vertices of $I$ and $J$ to a single vertex $v$ and sprouting a new rooted edge at $v$.
This operation corresponds to the formal (non-associative) bracket of elements from the index set (Figure~\ref{inner-product-trees-fig} upper right). 

\begin{figure}[ht!]
        \centerline{\includegraphics[scale=.40]{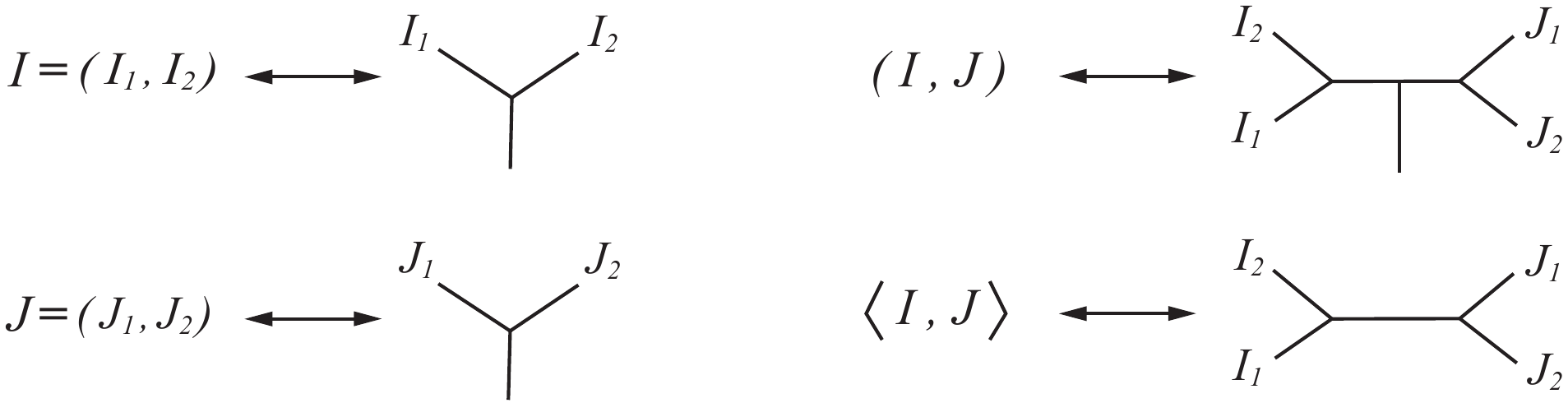}}
        \caption{The \emph{rooted product} $(I,J)$ and \emph{inner product} $\langle I,J \rangle$ of rooted trees $I=(I_1,I_2)$ and $J=(J_1,J_2)$.
        Here $I_1,I_2,J_1,J_2$ each represent rooted trees which are contained as subtrees of $I$ and $J$.}
        \label{inner-product-trees-fig}
\end{figure}
\item\label{def-item:inner-product}
 The \emph{inner product}  $\langle I,J \rangle $ is the
unrooted tree gotten by identifying the roots of $I$ and $J$ to a single non-vertex point.
(Figure~\ref{inner-product-trees-fig} lower right.)

\end{enumerate}
\end{defn}

Other than drawing explicit pictures of trees, we have the following descriptive conventions: 
Rooted trees are usually denoted by upper-case letters, and indexed by (identified with) formal non-associative bracketings of elements from
the index set as in item~(\ref{def-item:rooted-product}) of Definition~\ref{def:Tree-ops}. 
Twisted trees are denoted by adding a $\iinfty$-superscript to a bracketing for the corresponding rooted tree, for instance $J^\iinfty$.
Framed trees are usually denoted by lower case letters, especially ``$t$'', or by the inner product as in item~(\ref{def-item:inner-product}) of Definition~\ref{def:Tree-ops}.


\section{Whitney towers}\label{sec:w-towers}
This section sketches some basic background material on (twisted) Whitney towers.
Additional Whitney tower material relevant to the proof of Theorem~\ref{thm:twisted-self-Ck-equals-Milnor-Arf-equals-tower} will be sketched in Section~\ref{sec:proof-thm:twisted-self-Ck-equals-Milnor-Arf-equals-tower}. 
See e.g.~\cite{CST1} for more details.

\subsection{Whitney disks and their twistings}\label{subsec:twisted-w-disks}

Let $A$ and $B$ be oriented connected immersed surfaces in an oriented $4$--manifold $X$, and
suppose there exists a pair of oppositely signed transverse intersection points $p$ and $q$ in $A\cap B$. 
Joining these points by any embedded pair of arcs lying in the interiors of $A$ and $B$ and avoiding any other intersection points forms a loop, and any generically immersed disk $W$ in (the interior of) $X$ bounded by such a loop is called a \emph{Whitney disk}.

The normal disk-bundle of a Whitney disk $W$ in $X$ is isomorphic to $D^2\times D^2$,
and comes equipped with a nowhere-vanishing \emph{Whitney section} over the boundary $\partial W$ given by pushing $\partial W$  tangentially along one surface and normally along the other, avoiding the tangential direction of $W$.

The
relative Euler number $\omega(W)\in\Z$ of the Whitney section represents the obstruction to extending
the Whitney section to a nowhere-vanishing section over $W$. Following traditional terminology, when $\omega(W)$ vanishes $W$ is said to be \emph{framed}. 
The value of $\omega(W)$ is called the \emph{twisting} of $W$,
and when $\omega(W)$ is non-zero we say that $W$ is \emph{twisted}.


\subsection{Definition of Whitney towers}\label{subsec:w-towers-def}

\begin{defn}\label{def:w-tower}
Let $A$ be an oriented properly immersed surface in an oriented $4$--manifold. 
A \emph{Whitney tower} supported by $A$ is defined by:
\begin{enumerate}
\item
$A$ itself is a Whitney tower.
\item
If $\cW$ is a Whitney tower and $W$ is a Whitney disk pairing intersections in $\cW$, then
the union $\cW\cup W$ is a Whitney tower.
\end{enumerate}
\end{defn}
If $\cW$ is a Whitney tower supported by $A$, then we also say that $\cW$ is a Whitney tower ``on'' $A$.

We say that a link $L\subset S^3$ ``bounds a Whitney tower $\cW$'' if the components of $L$ bound properly immersed disks into $B^4$ which support $\cW$.  And a \emph{Whitney tower concordance} between two links is a Whitney tower on immersed annuli in $S^3\times [0,1]$ cobounded by the links in $S^3\times \{0,1\}$.

If a Whitney tower $\cW$ on $A$ contains no unpaired intersections, and if all the Whitney disks in $\cW$ are framed, then $A$ is regularly homotopic to an embedding by performing Whitney moves along the Whitney disks in $\cW$. So constructing a Whitney tower on $A$ can be thought of as an attempt to ``approximate'' a homotopy to an embedding.
On the other hand, the unpaired intersections and Whitney disk twistings in $\cW$ can represent obstructions to the existence of a homotopy of $A$ to an embedding.

\subsection{Trees in Whitney towers}\label{subsec:trees-in-towers}
The following definition describes the association of trees to Whitney disks and unpaired intersections in a Whitney tower.
As illustrated in Figure~\ref{WdiskIJandIJKint-fig} these trees can be considered to be immersed in the Whitney tower,
and
we adopt the convention that tree orientations correspond to Whitney disk orientations via this point of view.
However, specific orientation choices and conventions will usually be suppressed from notation/discussion.

\begin{defn}\label{def:int-and-Wdisk-trees}
Let $A=W_1,\ldots,W_m\looparrowright X$ be an oriented properly immersed surface supporting a Whitney tower $\cW$ in an oriented $4$--manifold $X$, with $W_i$ the connected components of $A$.

\begin{enumerate}

\item
To each $W_i$ is associated
the order zero rooted tree consisting of an edge with one vertex labeled by $i$. Recursively, the rooted tree $(I,J)$ is associated to any Whitney disk $W_{(I,J)}$ in $\cW$ pairing intersections
between $W_I$ and $W_J$ (Figure~\ref{WdiskIJandIJKint-fig}, left). 

\item
To each transverse intersection $p\in W_{(I,J)}\cap W_K$ between any $W_{(I,J)}$ and
$W_K$ in $\cW$ is associated the un-rooted tree $t_p:=\langle (I,J),K \rangle$  (Figure~\ref{WdiskIJandIJKint-fig}, right).

\end{enumerate}
\end{defn}

\begin{figure}[ht!]
        \centerline{\includegraphics[width=150mm]{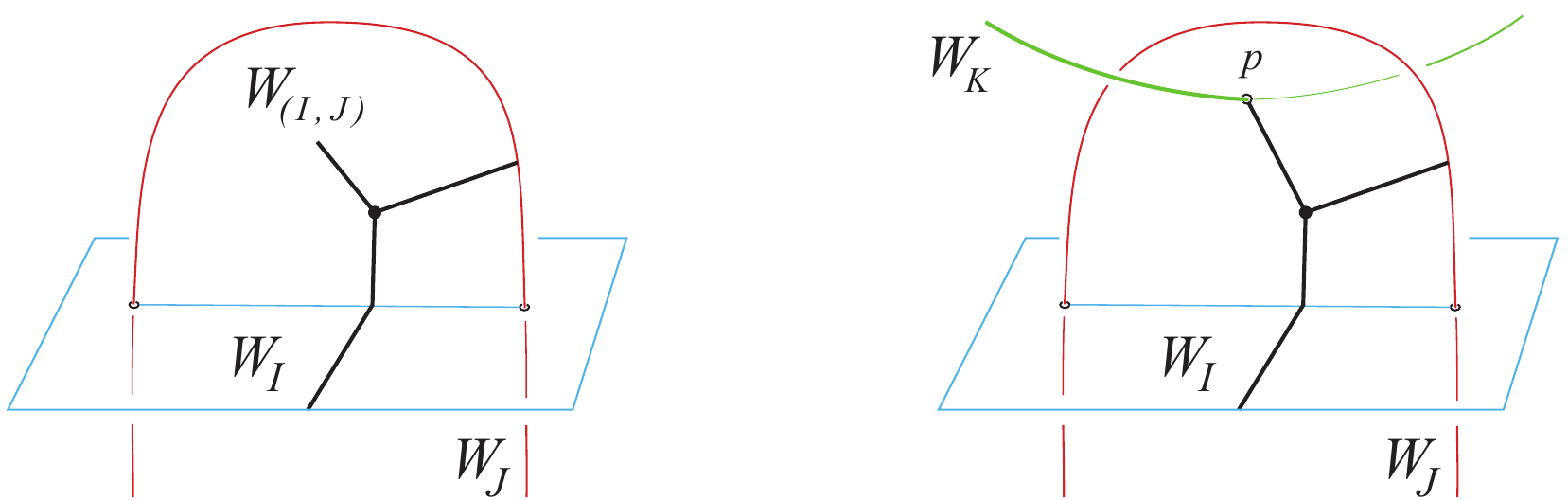}}
        \caption{}
        \label{WdiskIJandIJKint-fig}
\end{figure}
The left side of Figure~\ref{WdiskIJandIJKint-fig} shows (part of) the rooted tree $(I,J)$ associated to a Whitney disk $W_{(I,J)}$, with the rooted edge and adjacent trivalent vertex embedded in the interior of $W_{(I,J)}$. The rest of the edges of $(I,J)$ are sheet-changing paths across Whitney disk boundaries between the other trivalent vertices of $(I,J)$ in the interiors of each Whitney disk that has non-empty intersection with the trees $I$ and $J$ associated to $W_I$ and $W_J$. The right side of the same figure shows (part of) the unrooted tree $t_p=\langle (I,J),K \rangle$ associated to an intersection $p\in W_{(I,J)}\cap W_K$. Note that $p$ corresponds to where the roots of $(I,J)$ and $K$ are identified to a (non-vertex) point in $\langle (I,J),K \rangle$.
Also, $I$, $J$ and $K$ need not be distinct.

From these associated trees come the gradings by \emph{order} of the connected components and Whitney disks, and their transverse intersections:
\begin{defn}\label{def:w-disk-and-int-order}
For all $W_I$ and $W_J$ in $\cW$:
\begin{enumerate}
\item
The \emph{order} of $W_I$ is defined to be the order of the rooted tree $I$.
\item
The \emph{order} of a transverse intersection $p\in W_I\cap W_J$ is defined to be the order of the tree $t_p=\langle W_I,W_J\rangle$.
\end{enumerate}
\end{defn}

If $W_{J'}$ contains a vertex of the tree $J$ associated to a Whitney disk $W_J$, then we say that $W_{J'}$ \emph{supports} $W_J$.

\subsection{Whitney tower intersection forests}\label{subsec:int-forests}
The trees associated to unpaired intersections and twisted Whitney disks, together with signs and twistings, capture the essential geometric information contained in a Whitney tower:
\begin{defn}\label{def:intersection forests}
The (oriented) \emph{intersection forest} $t(\cW)$ of a Whitney tower $\cW$ is the disjoint union of signed trees associated to all unpaired intersections $p$ in $\cW$ and integer-coefficient $\iinfty$-trees associated to all twisted Whitney disks $W_J$ in $\cW$:
\[
t(\cW)=\amalg \ \epsilon_p \cdot  t_p \,\, + \amalg \ \omega(W_J)\cdot  J^\iinfty
\]
with $\epsilon_p\in\{+,-\}$ the usual sign of the transverse intersection point $p$, and $\omega(W_J)\in\Z$ the twisting of $W_J$. 
\end{defn}
More precisely, $t(\cW)$ is the \emph{multiset} of signed oriented trees and integer-coefficient $\iinfty$-trees associated to the un-paired intersections and non-trivially twisted Whitney disks in $\cW$. For cosmetic convenience, $t(\cW)$ will often be written as a formal linear combination of trees, e.g.~the ``$+$'' sign in the equation defining $t(\cW)$.

\subsection{Whitney towers of order $n$}\label{subsec:w-tower-order}

Intersection forests can be used to organize Whitney towers by \emph{order} as in the following definition, which will be used in Section~\ref{sec:proof-thm:twisted-self-Ck-equals-Milnor-Arf-equals-tower}. (This also defines the \emph{degree} of a Whitney tower as one more than the order.)
\begin{defn}\label{def:w-tower-order}\
\begin{enumerate}
\item
$\cW$ is an \emph{order $n$ (framed) Whitney tower} if every framed tree in $t(\cW)$ is of order~$\geq n$,
and every twisted tree in $t(\cW)$ is of order~$\geq n+1$. 
\item
$\cW$ is an \emph{order $n$ twisted Whitney tower} if every framed tree in $t(\cW)$ is of order~$\geq n$, and every twisted tree in $t(\cW)$ is of order~$\geq \frac{n}{2}$. 
\end{enumerate}
\end{defn}
So an order $n$ framed Whitney tower is also an order $n$ twisted Whitney tower,
and the difference between the weaker notion of order for twisted Whitney towers is that an order $n$ twisted Whitney
tower is allowed to have twisted Whitney disks of order as low as $n/2$, whereas an order $n$ framed Whitney tower is required to have only framed Whitney disks through order $n$. Motivation for allowing order $n/2$ twisted Whitney disks in an order $n$ twisted Whitney tower will appear in sections~\ref{subsec:W-tower-int-trees} and~\ref{subsec:eta-map}.

Using standard manipulations of Whitney towers (as in the proof of Lemma~\ref{lem:collapse-univalent-edges} of section~\ref{subsec:k-rep-w-towers-and-self-Ck-conc}), any order $n$ Whitney tower $\cW$ can always be modified so that $t(\cW)$ contains only framed trees of order exactly $n$,
and any order $n$ twisted Whitney tower $\cW$ can be modified so that $t(\cW)$ contains only framed trees of order exactly $n$ and twisted trees of order exactly $n/2$.

\subsection{Split Whitney towers}\label{subsec:split-w-towers}
The \emph{singularities} of a Whitney tower are the transverse intersections and self-intersections, as well as the arcs of intersections between the boundary of each Whitney disk and the interiors of the lower-order surfaces containing the boundary arcs.

A Whitney disk $W$ in a Whitney tower $\cW$ is \emph{clean} if the interior of $W$ contains no singularities (i.e.~is embedded and disjoint from the rest of $\cW$).

\begin{defn}\label{def:split-w-tower}
A Whitney tower $\cW$ is \emph{split} if both the following hold:
\begin{enumerate}
\item\label{lem-item:Bing-Hopf-split-framed}
The set of singularities in the interior
of any Whitney disk in $\cW$ consists of either a single point, or a single boundary arc of a Whitney disk, or is empty.
\item\label{lem-item:Bing-Hopf-split-twisted}
All non-trivially twisted Whitney disks in $\cW$ are clean and have twisting $\pm 1$.
\end{enumerate}
\end{defn}

So if $p\in W_I\cap W_J$ is an unpaired intersection in a split Whitney tower, then $W_I$ and $W_J$ are framed, explaining
why the associated trees $t_p=\langle I,J\rangle$ are called ``framed'' trees. Note that any framed Whitney disks which are clean can be eliminated by Whitney moves (on themselves), and this is usually tacitly assumed during manipulations of Whitney towers. The following lemma is useful for proofs involving manipulations of Whitney towers, and will be invoked frequently in subsequent sections:

\begin{lem}\label{lem:split-w-tower}
If $A$ supports a Whitney tower $\cW$ in a $4$-manifold $X$, then $A$ is homotopic (rel $\partial$) by finger-moves to 
$A'$ supporting a split
Whitney tower $\cW'$ with $t(\cW')= t(\cW)$.$\hfill\square$
\end{lem}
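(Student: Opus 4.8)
\textbf{Proof plan for Lemma~\ref{lem:split-w-tower}.}
The plan is to achieve the two conditions of Definition~\ref{def:split-w-tower} one at a time, by purely local modifications (finger-moves) that do not change the intersection forest. First I would establish condition~(\ref{lem-item:Bing-Hopf-split-framed}): given a Whitney disk $W$ in $\cW$ whose interior contains several singularities (interior intersection points, self-intersections, and boundary arcs of higher Whitney disks), I would push a small collar of $W$ off itself by finger-moves to subdivide $W$ into a union of parallel copies, each carrying exactly one singularity. Concretely, one replaces $W$ by a sequence $W^{(1)},W^{(2)},\dots$ of parallel Whitney disks joined by thin tubes (the ``multiplying'' operation on Whitney disks), arranged so that the $j$-th singularity lies in the interior of $W^{(j)}$ and nowhere else; the tubing does not create new unpaired intersections or affect framings, and each $W^{(j)}$ inherits the same rooted tree as the original $W$, so the associated trees (and their signs) are unchanged. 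Here one must be careful that a Whitney disk $W'$ whose boundary arc runs through $W$ is carried along correctly — the boundary arc lands in exactly one of the pieces $W^{(j)}$ — and that this reorganization can be performed coherently through all levels of the tower, working from the top down so that modifications at one level respect the already-split structure above.

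Next I would address condition~(\ref{lem-item:Bing-Hopf-split-twisted}): any non-trivially twisted Whitney disk $W_J$ with $\omega(W_J)=n$ must be made clean and given twisting $\pm 1$. To reduce the twisting, I would use the standard ``boundary-twisting'' and ``transfer'' moves: an interior twist changes $\omega$ by $\pm 1$ at the cost of creating a single self-intersection of $W_J$ whose tree is $\langle J,J\rangle$ — but this interior intersection is then paired off, or more efficiently one uses the identity $\omega(W_J)\cdot J^\iinfty = (\pm1)\cdot J^\iinfty + \langle J,J\rangle + \cdots$ appropriately; the point is that repeatedly applying these moves converts $n\cdot J^\iinfty$ into $\mathrm{sign}(n)\cdot J^\iinfty$ together with framed trees that are already present, leaving $t(\cW)$ unchanged as a multiset. (This is exactly the relation underlying the ``$\mathbb{Z}$ versus $\mathbb{Z}_2$'' behavior of $\iinfty$-trees; since the lemma only asks that $t(\cW')=t(\cW)$ as the multiset of Definition~\ref{def:intersection forests}, and twisted trees there carry integer coefficients, I would instead simply note that a twisting of $n$ can be split into $|n|$ parallel clean twisted Whitney disks each of twisting $\pm1$ by the same multiplying-by-parallel-copies trick as above, so that the integer coefficient $n\cdot J^\iinfty$ is realized as $n$ separate $(\pm1)$-coefficient copies, matching the multiset.) To make a twisted $W_J$ clean, push any interior singularities off it by finger-moves into the lower strata exactly as in the first step; a boundary arc of a higher Whitney disk lying on a twisted $W_J$ can likewise be pushed across into a neighboring framed Whitney disk.

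The order in which I would carry this out: (i) split all Whitney disks so each carries a single singularity (condition~(\ref{lem-item:Bing-Hopf-split-framed})), working downward through the tower levels; (ii) among the resulting disks, separate the twisted ones into parallel copies of twisting $\pm1$; (iii) clean each twisted disk by finger-moving its lone singularity downward. Throughout, the invariance $t(\cW')=t(\cW)$ follows because every move used — parallel-copy tubing, finger-moves, boundary-arc pushes — is a local homotopy rel $\partial$ that neither creates nor destroys unpaired intersections and preserves the rooted trees, signs, and twistings recorded in the forest. The main obstacle I anticipate is bookkeeping rather than conceptual: ensuring that when a Whitney disk is subdivided, the boundary arcs of all Whitney disks resting on it, and the entire sub-tower that those disks support, get carried along consistently, and that performing this level-by-level does not undo the splitting already achieved at higher levels. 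This is handled by always working from the top of the tower downward and by the standard observation that finger-moves on a surface $A$ at the bottom only push singularities \emph{upward into} new Whitney disks without disturbing the combinatorics of the existing ones.
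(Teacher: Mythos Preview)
The paper does not prove this lemma in-text; it defers to \cite[Lem.~2.18]{CST1} and (for the framed case) \cite[Lem.~3.5]{S1}, \cite[Lem.~13]{ST2}. Your overall plan---split each Whitney disk so it carries a single singularity, then deal with twistings---matches theirs, but the local mechanism you describe is not the one that works. ``Parallel copies of $W$ joined by thin tubes'' does not produce several Whitney disks: parallel pushoffs of $W_{(I,J)}$ all encircle the \emph{same} intersection pair $\{p,q\}\subset W_I\cap W_J$, tubing them yields a single surface, and there is no mechanism for redistributing the pre-existing interior singularities among such copies. The move used in the cited proofs is a finger move on a \emph{lower} sheet: choose an arc $\alpha$ in $W$ running between its two boundary arcs and separating one singularity from the rest, then push $W_I$ through $W_J$ along $\alpha$. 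This creates a new cancelling pair $p',q'\in W_I\cap W_J$, and $W\setminus\nu(\alpha)$ becomes two honest Whitney disks (one pairing $\{p,p'\}$, the other $\{q',q\}$), each inheriting the rooted tree $(I,J)$ and carrying fewer singularities. Iterating from the top of the tower downward gives condition~(\ref{lem-item:Bing-Hopf-split-framed}).

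The same splitting handles condition~(\ref{lem-item:Bing-Hopf-split-twisted}): the relative Euler number is additive under this cut, so a clean twisted $W_J$ with $\omega(W_J)=n$ splits into clean pieces whose twistings sum to $n$, and iterating yields $|n|$ clean pieces each of twisting $\operatorname{sign}(n)$. You were right to abandon interior/boundary twists (those alter $t(\cW)$), but ``parallel copies of twisting $\pm 1$'' is the same misconception recurring---a parallel copy of a disk with twisting $n$ again has twisting $n$. Your step~(iii) is then also unnecessary: if a twisted disk still carries a singularity, one further split along an arc puts the singularity on a piece with twisting~$0$ and leaves the twisting on a clean piece. So the outline is sound and the bookkeeping you anticipate is exactly the content of the cited lemmas, but the central local operation needs to be the finger move on the underlying sheets rather than any ``multiplying'' construction.
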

A proof of this result can be found in Lemma~2.18 of \cite{CST1}; and in the
case where all Whitney disks are framed in \cite[Lem.3.5]{S1} or \cite[Lem.13]{ST2}.



\section{Claspers}\label{sec:claspers}
This section fixes terminology, notation and conventions on (twisted) claspers, and summarizes the relevant relationship between clasper surgery and grope cobordism. 
 
\subsection{Clasper conventions}\label{subsec:clasper-conventions}

For details on Habiro's {\em clasper surgery} techniques see \cite{CT1,CT2,Hab00}. We adopt the terminology of \cite{CT1,CT2}, together with that of \cite{CST7} for twisted claspers. Although claspers are surfaces, unless otherwise specified we follow the customary identification of a clasper with its $1$-dimensional spine, which is a framed unitrivalent graph. All of our claspers will be \emph{tree claspers}, which means that collapsing each leaf to a point yields a unitrivalent tree, sometimes referred to as ``the underlying tree'' of the clasper.

\begin{defn}\label{def:claspers}
For claspers and links in $S^3$ we have the following terminology:
\begin{enumerate}

\item
A clasper $\Gamma$ is \emph{capped} if the leaves of $\Gamma$ bound disjointly embedded disks (the \emph{caps}) into $S^3\setminus \Gamma$. (So each leaf of a capped clasper is unknotted.) 

\item
A cap for a clasper on a link $L$ is called a \emph{simple cap} if it is $0$-framed and intersects $L$ in a single point.

\item
A \emph{simple clasper} $\Gamma$ is a capped tree clasper on a link such that each cap of $\Gamma$ is simple.  

\item
A \emph{twisted clasper} $\Gamma^\iinfty$ is a capped tree clasper on a link $L$ such that all caps are simple except for one 
$\omega$-framed cap, for some $\omega\neq 0$, whose interior is disjoint from $L$. 
If $\Gamma^\iinfty$ is a twisted tree clasper, this \emph{twisting} $\omega\in\Z\setminus\{0\}$ of $\Gamma^\iinfty$ will be denoted $\omega(\Gamma^\iinfty)$.

\item
The tree $t(\Gamma)$ associated to a simple clasper $\Gamma$ is the underlying tree of $\Gamma$ with each univalent vertex labeled by the index of the link component that intersects the corresponding cap of $\Gamma$.

\item
The twisted tree $t(\Gamma^\iinfty)$ associated to a twisted clasper $\Gamma^\iinfty$ is the underlying tree of $\Gamma^\iinfty$ with each univalent vertex corresponding to simple cap of $\Gamma^\iinfty$ labeled by the index of the link component that intersects the cap, and the univalent vertex corresponding to the twisted cap labeled by the twist symbol $\iinfty$. 

\end{enumerate}
\end{defn}





\subsection{Clasper intersection forests}\label{subsec:clasper-forest}
Let $\Gamma$ be a simple tree clasper on an oriented link $L$, with a choice of orientation for $\Gamma$ as a surface. This orientation induces a cyclic orientation on the trivalent vertices of the tree $t(\Gamma)$, and also defines a normal direction to the surface $\Gamma$. Define the sign 
$\epsilon_{\Gamma}\in\{+,-\}$ of $\Gamma$ to be the product of the signs of all the leaves of $\Gamma$, where the sign of a leaf is $\pm 1$ according to whether or not the link component passes through the leaf in the same or opposite direction as the normal direction to $\Gamma$.
Then $\epsilon_{\Gamma}$ does not depend on the choice of orientation on $\Gamma$ (see \cite[Lem.25]{CST}).

\begin{defn}\label{def:clasper-forests}
The (oriented) \emph{intersection forest} $t(\cC)$ of a collection $\cC=\Gamma_i\amalg\Gamma^\iinfty_j$ of simple claspers $\Gamma_i$ and twisted claspers $\Gamma^\iinfty_j$ is the disjoint union of signed trees and integer-coefficient $\iinfty$-trees:
\[
t(\cC)=\amalg \ \epsilon_{\Gamma_i}\cdot t(\Gamma_i) \,\, + \amalg \ \omega(\Gamma_j^\iinfty)\cdot  t(\Gamma_j^\iinfty).
\] 
\end{defn}
More precisely, $t(\cC)$ is the \emph{multiset} of signed framed trees and integer-coefficient $\iinfty$-trees associated to the simple and twisted claspers in $\cC$ (compare with Definition~\ref{def:intersection forests}). For cosmetic convenience, $t(\cC)$ will often be written as a formal linear combination of trees, e.g.~the ``$+$'' sign in the equation defining $t(\cC)$. 

\subsection{Twisted $C_k$-concordance and twisted self $C_k$-concordance}\label{subsec:twisted-(self)-Ck-conc-def}

Links $L$ and $L'$ are \emph{twisted $C_k$-concordant} if $L$ and $L'$ are related by a sequence of concordances and/or clasper surgeries along collections
$\cC_i$ of claspers such that $t(\cC_i)$ contains only framed trees of degree $\geq k$ and/or twisted trees of degree $\geq k/2$ for each~$i$.

Links $L$ and $L'$ are \emph{twisted self $C_k$-concordant} if $L$ and $L'$ are twisted $C_k$-concordant via claspers whose intersection forests contain only \emph{mono-labeled trees}. Here a mono-labeled framed tree has all univalent vertices labeled by the same element of the index set, and a mono-labeled twisted tree has all univalent vertices labeled by the same element of the index set except for the univalent vertex which is labeled by the twist symbol $\iinfty$.

\subsection{Claspers and gropes}\label{subsec:claspers-and-gropes}
We refer the reader to e.g.~\cite{CT1,CST} for details on gropes and their associated trees, as well as the notions of capped grope cobordism and grope concordance of links. Briefly, a \emph{capped grope cobordism} between links is a grope embedded in $3$-space whose bottom stage is a collection of annuli each cobounded by the corresponding link components, such that
the tips of the grope bound embedded caps whose interiors each only intersect a single component of the bottom stage annuli. 
And a \emph{grope concordance} between links is a grope embedded in $S^3\times I$ cobounded by the links in each end of $S^3\times I$.
By ``pushing down'' intersections and ``cap splitting'', the tips of a grope concordance can always be arranged to bound caps whose interiors each only intersect a single component of the bottom stage annuli, so both capped grope cobordisms and grope concordances have associated trees whose labels come from the bottom stage components cobounded by the link components. 
One gets \emph{twisted} notions of capped grope cobordism and grope concordance by allowing each grope branch to have at most one twisted clean cap.

The following special case of the main result of \cite{CT1} adapted to links (as in \cite[Thm.23]{CST}) will be used during the proof of Theorem~\ref{thm:w-tower-forest-implies-clasper-forest}:

\begin{thm}\label{thm:clasper-surgery-equals-grope-cobordism}
A link $L$ is the result of clasper surgery along a collection $\cC$ of claspers on the unlink $U$ if and only if 
$L$ and $U$ are capped grope cobordant by a capped grope $G$ with $t(G)=t(\cC)$.
\end{thm}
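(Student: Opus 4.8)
The plan is to obtain this as the link version of the Conant--Teichner equivalence between clasper surgery on a knot (or string link in a ball) and capped grope cobordism, proved in \cite{CT1} and already reformulated for links in \cite[Thm.23]{CST}; the point is to localize both sides of the statement around a single clasper (resp.\ a single grope branch) and to check that the decorated trees match throughout. The guiding principle is that a simple tree clasper and a capped grope branch with the same labeled underlying tree implement the same surgery, so everything should reduce to a one-clasper/one-branch dictionary together with bookkeeping for the twisted (non-zero framed) caps.

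For the ``only if'' direction, begin with a collection $\cC=\Gamma_i\amalg\Gamma_j^\iinfty$ of simple and twisted tree claspers on $U$, which by definition have pairwise disjoint spines. In a small regular neighborhood of each spine, fatten the clasper into an embedded capped grope branch with the same underlying tree -- the operation dual to collapsing a grope to its spine -- so that each simple cap meets the bottom stage in a single point of the component indexed by the corresponding leaf label, while the single $\omega$-framed cap of a twisted clasper $\Gamma_j^\iinfty$ becomes a twisted clean cap of framing $\omega$. Since surgery along each $\Gamma_i$ is supported in its (disjoint) neighborhood, assembling these branches along annuli on $U$ exhibits $U$ and $L$ (the result of surgery on $U$ along $\cC$) as capped grope cobordant via a capped grope $G$ whose branches carry exactly the trees $t(\Gamma_i)$ and the $\iinfty$-trees $\omega(\Gamma_j^\iinfty)\cdot t(\Gamma_j^\iinfty)$; hence $t(G)=t(\cC)$.

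For the converse, given a capped grope cobordism $G$ between $U$ and $L$, first apply the standard moves recalled in Section~\ref{subsec:claspers-and-gropes} -- pushing down intersections, cap-splitting, and the grope-splitting of \cite{CT1} -- to put $G$ into a disjoint union of ``model'' capped grope branches, one for each tree $t_j$ appearing in $t(G)$ (each twisted branch retaining a single twisted clean cap of the prescribed framing), with all caps meeting the bottom annuli in single points. Inside a regular neighborhood of each model branch sits a dual simple (resp.\ twisted) tree clasper $C_j$ whose surgery pushes the link across that branch and whose associated tree, with labels read off from the bottom-stage components the caps meet, is exactly $t_j$. Performing these surgeries in the disjoint neighborhoods turns $U$ into $L$, and the resulting collection $\cC=(C_1,\dots,C_r)$ satisfies $t(\cC)=t(G)$.

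The substantive content -- the part genuinely imported from \cite{CT1,CST} rather than re-derived here -- is the verification that the fattening and splitting operations are mutually inverse on the level of surgered links (``surgery along $C_j$'' and ``the capped grope move across the model branch'' produce the same link) and that they respect the complexity gradings (class of the grope versus degree of the clasper). The remaining ingredients are routine: arranging that the localized manipulations take place in disjoint regions so the moves do not interact, and the elementary identification of an $\omega$-framed clean cap on a branch with the integer-coefficient $\iinfty$-tree $\omega\cdot t_j^\iinfty$, so that the full multisets of signed framed trees and $\Z$-weighted $\iinfty$-trees defining $t(G)$ and $t(\cC)$ agree.
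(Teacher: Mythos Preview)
The paper does not give its own proof of this statement: it is stated as a citation, introduced as ``the following special case of the main result of \cite{CT1} adapted to links (as in \cite[Thm.23]{CST}).'' Your proposal is an accurate summary of how those cited arguments run --- localizing to a single tree clasper/grope branch, using the fattening/collapsing dictionary of \cite{CT1}, and invoking cap-splitting and grope-splitting to reduce to model branches --- and you correctly flag that the substantive content is imported from \cite{CT1,CST} rather than re-derived. So your approach is exactly what the paper is pointing to; there is no separate argument in the paper to compare against.
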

Here $t(G)$ is the collection of signed trees associated to $G$, see \cite[Sec.3.4]{CST}.

Links $L$ and $L'$ are \emph{clasper concordant} if $L$ and $L'$ are related by a sequence of clasper surgeries and/or concordances. 
The following theorem follows from pushing the capped grope cobordisms from Theorem~\ref{thm:clasper-surgery-equals-grope-cobordism} into $4$-space as in \cite[Sec.3.5]{CST}:
\begin{thm}\label{thm:clasper-concordance-equals-grope-concordance}
If $\cC_1,\cC_2,\ldots,\cC_r$ are the collections of claspers in a clasper concordance from $L$ to $L'$, then there exists a grope concordance $G$ from $L$ to $L'$ such that $t(G)=\amalg^r_{i=1} t(\cC_i)$. 
\end{thm}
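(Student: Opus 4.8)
The plan is to reduce the statement to Theorem~\ref{thm:clasper-surgery-equals-grope-cobordism} by treating each step of the clasper concordance separately and then concatenating. First I would set up the situation: a clasper concordance from $L$ to $L'$ is by definition a finite sequence $L=L_0, L_1, \dots, L_r=L'$ of links in $S^3$, where each consecutive pair $L_{i-1}, L_i$ is related either by a concordance in $S^3\times[0,1]$ or by clasper surgery along the collection $\cC_i$ (with the convention that $\cC_i=\emptyset$ on the steps that are concordances, so that $t(\cC_i)=\emptyset$ there and the total collection of trees is $\amalg_{i=1}^r t(\cC_i)$). I would handle a single clasper-surgery step first, and here the cited result \cite[Sec.3.5]{CST} does the work: a capped grope cobordism in $S^3$ between $L_{i-1}$ and $L_i$ can be pushed into $S^3\times[0,1]$ to produce a grope \emph{concordance} $G_i$ with the same associated trees. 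Theorem~\ref{thm:clasper-surgery-equals-grope-cobordism} applies on the nose only when one of the two links is the unlink, so the first substantive point is to observe that the ``only if'' direction of that theorem localizes: clasper surgery along $\cC_i$ is performed in a ball disjoint from the rest of $L_{i-1}$, so one may replace the relevant sublink by an unlink, build the capped grope cobordism there, and glue in product annuli on the untouched components. This yields, for each clasper step, a capped grope cobordism between $L_{i-1}$ and $L_i$ with $t(G_i)=t(\cC_i)$, which then pushes into a grope concordance.

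Second, for the steps that are honest concordances, the concordance annuli themselves form a (trivial, capless, tree-less) grope concordance, contributing nothing to the tree collection. So after this step I have a grope concordance $G_i$ from $L_{i-1}$ to $L_i$ for each $i$, with $t(G_i)=t(\cC_i)$ (empty when step $i$ is a concordance).

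Third, I would stack these grope concordances: identify the top $S^3\times\{1\}$ of $G_i$ with the bottom $S^3\times\{0\}$ of $G_{i+1}$, reparametrize the resulting $r$ copies of $S^3\times[0,1]$ as a single $S^3\times[0,1]$, and check that the union $G=G_1\cup G_2\cup\cdots\cup G_r$ is again a grope concordance from $L$ to $L'$. The associated trees simply accumulate: $t(G)=\amalg_{i=1}^r t(G_i)=\amalg_{i=1}^r t(\cC_i)$, since the grope pieces are supported in disjoint sub-intervals of $[0,1]$ and the tree of a disjoint union of gropes is the disjoint union of the trees. The one point of care here is that a grope concordance is by definition a single embedded grope whose bottom stage is a collection of annuli; stacking produces something whose ``bottom stage'' in each sub-interval is an annulus cobounded by $L_{i-1}$ and $L_i$, and one should note that the concatenated bottom-stage annuli (from the intermediate links) can be absorbed into the body of the grope, or alternatively that we only need the endpoint links to cobound the resulting grope concordance, which they manifestly do.

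The step I expect to be the main obstacle is the localization of Theorem~\ref{thm:clasper-surgery-equals-grope-cobordism} away from the unlink: making precise that clasper surgery between two arbitrary links $L_{i-1}$ and $L_i$ (which agree outside a ball) produces a capped grope cobordism with the prescribed trees, by excising the ball, applying the unlink case there, and gluing back product annuli. This is morally routine — it is exactly the standard ``clasper surgeries happen locally'' principle — but it requires one to be careful that the caps of the grope built in the ball have interiors meeting only the bottom-stage annuli and not the product annuli glued in from outside, which follows because the construction of \cite{CT1} keeps everything inside the surgery ball. Everything else is bookkeeping with disjoint unions of trees and reparametrization of the interval.
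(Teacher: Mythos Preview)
Your proposal is correct and follows the same route as the paper: convert each clasper-surgery step into a capped grope cobordism via Theorem~\ref{thm:clasper-surgery-equals-grope-cobordism}, push into $S^3\times[0,1]$ as in \cite[Sec.3.5]{CST}, treat the concordance steps as tree-less grope concordances, and concatenate. The paper's proof is a one-line reference to exactly these two ingredients, so you have simply unpacked what the paper leaves implicit; your concern about localizing Theorem~\ref{thm:clasper-surgery-equals-grope-cobordism} away from the unlink is well-founded as stated, but note that the paper presents that theorem as a \emph{special case} of the main result of \cite{CT1} (adapted to links in \cite[Thm.23]{CST}), and the general result there already covers clasper surgery between arbitrary links, so the localization argument, while correct, is not strictly needed.
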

Theorem~\ref{thm:clasper-concordance-equals-grope-concordance} will be used in the proof of Theorem~\ref{thm:clasper-forest-implies-w-tower-forest}.


\section{Iterated Bing-doubles}\label{sec:bing-doubles}
This section describes relationships among Bing-doubling, Whitney towers and clasper surgery that will be used in Section~\ref{sec:Ck-conc-and-w-towers}.

The following generalization of the usual indexing for links and Whitney towers will be useful for describing links and Whitney towers that temporarily appear during intermediate steps in constructions. 
\subsection{Colored links and Whitney towers}\label{subsec:colored-links-and-towers}

A \emph{colored link} is a link $L$ together with a ``coloring'' map from the components of $L$ to the set $\{1,2,\ldots,m\}$
which is not necessarily a bijection (cf.~\cite{FT1}). A coloring is indicated by subscripts, eg.~$L_i$ denotes a component of $L$ in the preimage of $i$. 

Similarly, a \emph{colored Whitney tower} $\cW$ is a Whitney tower on a properly immersed surface $A$ equipped with a map from the connected components of $A$ to the set $\{1,2,\ldots,m\}$ which is not necessarily a bijection.

\subsection{Bing-doubling along trees}\label{subsec:Bing-along-trees}

See \cite[sec.3.2]{CST1} for details 
on the following association of trees to iterated Bing-doubles, which is based on Tim Cochran's ``Bing-doubling along a tree'' construction \cite{C}.

Let $L$ be a colored iterated untwisted Bing-double of the $\pm$-Hopf link.
A signed framed tree $t(L)$ corresponds to such an $L$ as follows:
If $L$ is just the $\pm$-Hopf link $L_i\cup L_j$ (Bing-doubling zero times), then $t(L)=\pm\langle i,j\rangle$.
Inductively, if $L'$ is gotten from $L$ by Bing-doubling a component $L_r$ of $L$, then 
$t(L')$ is gotten from $t(L)$ by attaching a pair of new univalent edges to the univalent vertex of $t(L)$ that corresponds to $L_r$, and then labeling each of the two new univalent vertices by the elements of $\{1,2,\ldots,m\}$ which color the corresponding components of $L'$. 

This correspondence between $L$ and $t(L)$ is described by saying ``$L$ is gotten by Bing-doubling a Hopf link along $t(L)$''.


Now let $L$ be a colored iterated untwisted Bing-double of a twisted Bing-double of the unknot, i.e.~$L$ is constructed by starting with a $\omega$-twisted Bing-double of the unknot $L_i\cup L_j$, for some integer $\omega\neq 0$, and then performing finitely many untwisted Bing-doublings of $L_i$ and/or $L_j$.
A signed twisted tree $T^\iinfty(L)$ corresponds to such an $L$ as follows: If $L=L_i\cup L_j$ is the $\omega$-twisted Bing-double of the unknot, then $T^\iinfty(L)=\omega\cdot(i,j)^\iinfty$. 
The inductive step is the same as for framed trees: 
If $L'$ is gotten from $L$ by Bing-doubling a component $L_r$ of $L$, then 
$t(L')$ is gotten from $T^\iinfty(L)$ by attaching a pair of new univalent edges to the univalent vertex of $T^\iinfty(L)$ that corresponds to $L_r$, and then labeling each of the two new univalent vertices by the elements of $\{1,2,\ldots,m\}$ which color the corresponding components of $L'$.

This correspondence between $L$ and $T^\iinfty(L)$ is described by saying ``$L$ is gotten by Bing-doubling a $\omega$-twisted unknot along $T^\iinfty(L)$''.



\subsection{Iterated Bing-doubles in Whitney towers}\label{subsec:bing-in-w-towers}
The two following lemmas show how Whitney towers in $4$--manifolds can be decomposed into local standard Whitney towers in $4$--balls bounded by colored links which are iterated Bing-doubles of Hopf links or iterated Bing-doubles of twisted Bing-doubles of the unlink. These lemmas will be used in the proof of Theorem~\ref{thm:w-tower-forest-implies-clasper-forest} in Section~\ref{sec:Ck-conc-and-w-towers}.

\begin{lem}\label{lem:framed-tree-bing-link}
Let $\cW\subset X$ be a split Whitney tower in a $4$--manifold $X$, and let $\epsilon_p\cdot t_p\in t(\cW)$ be an order $n$ tree corresponding to $p=W_I\cap W_J$, with $t_p$ embedded in $\cW$. Then there exists a regular $4$--ball neighborhood $N\subset X$ of $t_p$ such that:
\begin{enumerate}
\item
$L^N:=\partial N\cap \cW$ is an $(n+2)$-component colored iterated Bing double of a Hopf link along $\epsilon_p\cdot t_p$, and
\item
$N\cap \cW$ is a colored split Whitney tower in $N$ on embedded disks bounded by $L^N$ with $t(N\cap \cW)=\epsilon_p\cdot t_p$.
\end{enumerate}
\end{lem}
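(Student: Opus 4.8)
The plan is to construct the neighborhood $N$ by "thickening" the embedded tree $t_p\subset\cW$ one Whitney disk at a time, following the recursive structure of the tree and building up the Bing-double correspondence from Section~\ref{subsec:Bing-along-trees} simultaneously. First I would recall that, because $\cW$ is split, the Whitney disks $W_I$ and $W_J$ meeting at $p$ are framed, and all the Whitney disks along $t_p$ are embedded, with their interiors containing at most one singularity (which, for the disks making up $t_p$ itself, is precisely a boundary arc or the point $p$). So the geometric tree $t_p$ sits inside $\cW$ as an embedded $1$-complex, and a regular neighborhood of it in $X$ is a $4$-ball $N$; the point is to identify $\partial N\cap\cW$.

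The key steps, in order: (1) Set up the recursion on the order of the subtree. At order zero, a neighborhood of a single edge meeting the order-zero sheet $W_i$ in one point has boundary a $3$-ball meeting $\cW$ in an unknotted arc — this is the base of the induction, and two such edges joined at the root of a bracket produce a Hopf link, matching $t(L^N)=\pm\langle i,j\rangle$. (2) The inductive step mirrors exactly the Bing-doubling construction: passing from a Whitney disk $W_{(I',J')}$ to the two lower-order disks $W_{I'}$, $W_{J'}$ it pairs corresponds to attaching two new univalent edges at a univalent vertex, i.e.\ to replacing one component of the link-so-far by its Bing double. Here I would use the standard local picture of a Whitney disk and its two paired intersection points (as in Figure~\ref{WdiskIJandIJKint-fig}) to check that the boundary of the enlarged neighborhood genuinely performs a Bing-doubling operation on the relevant component, with the correct coloring inherited from the labels on the univalent vertices. (3) Account for the sign $\epsilon_p$: the framings and crossing signs of the Whitney disks along $t_p$ determine the signs of the leaves of the resulting Bing double, so that $L^N$ is the iterated Bing double of a $\pm$-Hopf link along $\epsilon_p\cdot t_p$. (4) Finally, observe that $N\cap\cW$ is a split Whitney tower on disks bounded by $L^N$: the disks are the (framed, embedded) pieces of the $W_I$'s and $W_J$'s truncated to $N$, the only unpaired intersection inside $N$ is $p$ itself, and by construction $t(N\cap\cW)=\epsilon_p\cdot t_p$. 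That the disks bounded by $L^N$ are embedded follows from splitness together with choosing $N$ small enough to avoid all singularities of $\cW$ other than those lying on $t_p$.

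The main obstacle I expect is step (2): making precise that enlarging the neighborhood across one Whitney disk performs exactly a Bing-doubling, including getting the framings/signs and the cyclic orientations at trivalent vertices to match the conventions of Section~\ref{subsec:Bing-along-trees}. One has to choose the arcs in $A$ and $B$ defining each Whitney disk, and the pushed-in Whitney sections, compatibly along the whole tree so that the local models glue; and the twisting data must be tracked so that the "untwisted" Bing-doublings in the statement are genuinely untwisted (this uses that $W_I$ and $W_J$, and all Whitney disks strictly below the root pairing, are framed — a consequence of splitness). I would handle this by induction with a carefully stated inductive hypothesis that records not just the combinatorial link type of $\partial N'\cap\cW$ but also a chosen identification of $\partial N'\cap\cW$ with a standard model of the iterated Bing double, together with the identification of the truncated Whitney disks with the standard bounding disks, so that the next enlargement step is a purely local modification in a $4$-ball neighborhood of the next Whitney disk. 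A secondary, more routine, point is verifying the component count $n+2$, which follows immediately since an order $n$ tree has $n+2$ univalent vertices.
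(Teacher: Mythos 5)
Your proposal follows essentially the same strategy as the paper: take $N$ to be a regular $4$--ball neighborhood of the Whitney disks carrying $t_p$, and build the Bing-double structure of $L^N$ inductively by extending the neighborhood one Whitney disk at a time, using splitness to guarantee that each stage is a genuine untwisted Bing-doubling. The difficulty you flag in step (2) is resolved in the paper by a clean local model (the ``Observation''): the boundary $4$--ball link of a thickening of an embedded Whitney disk $W$ pairing $U\cap V$ is a band sum of two oppositely-signed Hopf links, hence an $\omega(W)$-twisted Bing double of the unknot $\partial W$, so that framedness of the intermediate Whitney disks gives untwistedness and the sign $\epsilon_p$ enters only through the Hopf link at $p$.
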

\begin{proof}
First consider the special case $n=0$, with $t_p=\,i-\!\!\!-\,j$ a small sheet-changing arc passing through $p\in W_i\cap W_j$: For $N$ a small $4$--ball around $p$ containing $t_p$, $N\cap \cW=D_i(p)\cup D_j(p)$ is the union of embedded disks $D_i(p)\subset W_i$ and $D_j(p)\subset W_j$ intersecting transversely in $p$, and $L^N:=\partial N\cap \cW$ is a Hopf link  $H_p=\partial D_i(p)\cup \partial D_j(p)$ (Bing-doubled zero times) colored by $i$ and $j$. Here the sign $\epsilon_p$ of $t_p$ 
is the sign of $H_p$, via the usual orientation conventions.

Now assume $t_p$ has positive order $n>0$.
Take the embedding of $t_p$ in $\cW$ to have each $i$-labeled univalent edge terminate at the order $0$ surface $W_i$ (rather than changing sheets into $W_i$), so if a Whitney disk $W_{(i,J)}$ contains a trivalent vertex of $t$, then the adjacent $i$-labeled univalent vertex of $t_p$ is in the boundary arc of $W_{(i,J)}$ that lies in $W_i$.

Take $N$ to be
a small regular $4$--ball neighborhood of the union of the Whitney disks containing the trivalent vertices of $t_p$ (and hence containing all of $t_p$ by the previous paragraph's choice of the embedding $t\subset\cW$). By taking $N$ small enough it may be arranged that $N$ intersects the order $0$ disks $W_i$ in embedded disk-neighborhoods $\Delta_i^r\subset W_i$ around each boundary arc $\partial W_{(i,J)}\subset W_i$, and around $p$ if $p\in W_i$. The superscript $r$ for $\Delta_i^r$ ranges over the number of univalent vertices labeled by $i$ in $t_p$. 


The link
$L^N=\cup_{i,r}\partial\Delta_i^r=\partial N\cap\cW$ has $n+2$ components, one component for each univalent vertex of $t_p$.  
To see that $L^N$ is an iterated Bing-double of a Hopf link along $t_p$, we start with the following general observation:

\textbf{Observation:}
Let $W$ be an embedded Whitney disk pairing oppositely-signed transverse intersections $p$ and $q$ between surface sheets $U$ and $V$ in a $4$--manifold $X$. The normal disk bundle $\nu_W\cong W\times D^2$ of $W$ in $X$ is diffeomorphic to a $4$--ball $B$; and via the identification of $\nu_W$ with a tubular neighborhood of $W$ in $X$, it may be assumed that $B\cap U$ is an embedded disk-neighborhood $\Delta_U\subset U$ of the boundary arc $\partial_U W\subset U$, and $B\cap V$ is an embedded disk-neighborhood $\Delta_V\subset V$ of $\partial_V W\subset V$.     
The disk $\Delta_U$ decomposes as the union of disks $D_U(p)$ and $D_U(q)$ around $p$ and $q$, together with a band $b_U$ running along $\partial_U W$.
Similarly, the disk $\Delta_V$ decomposes as the union of disks $D_V(p)$ and $D_V(q)$ around $p$ and $q$, together with a band $b_V$ running along $\partial_V W$. 

The link $L^B=\partial\Delta_U\cup\partial\Delta_V\subset \partial B$ is a band sum of two oppositely signed Hopf links $H_p=\partial D_U(p)\cup \partial D_V(p)$ and $H_q=\partial D_U(q)\cup \partial D_V(q)$. It follows that $L^B$ is a (possibly twisted) Bing-double of a knot; and this knot must be the unknot, since an unknotting disk is exhibited by a parallel of the embedded $W$ pushed into $\partial B$ (using the product structure $B\cong W\times D^2$). The twisting of the Bing-doubling of $L^B$ is 
equal to the twisting of $W$ relative to $U$ and $V$ around $\partial_U W\cup \partial_V W$, which is
exactly $\omega (W)$.

Returning to consideration of $L^N$: 

Since $t_p=\langle W_I,W_J\rangle$ has positive order, at least one of $W_I$ and $W_J$ is a Whitney disk, say $W_I=W_{(U,V)}$ (recall that all Whitney disks containing trivalent vertices of $t$ are framed and embedded since $\cW$ is split). Now applying the above Observation to a $4$--ball thickening $B\subset N$ of $W_{(U,V)}$,
we see in $\partial B\cap N$ an untwisted Bing-double $L^B=L^B_U\cup L^B_V$ of the unknotted boundary of $W_{(U,V)}$, with $L^B_U\subset W_U$ and $L^B_V\subset W_V$. If $W_U$ (resp. $W_V$) is order zero, then we may assume
$L^B_U=\partial\Delta_u^r$ (resp. $L^B_V=\partial\Delta_v^r$) for some $r$.  Otherwise, say $W_U=W_{(U_1,U_2)}$ is of positive order. Then $B$ can be extended to a $4$--ball $B'$ which also contains a thickening of $W_U$,
with $L^B_U=\partial W_U$, and the resulting link $L^{B'}=B'\cap N$ is gotten from $L^B$ by Bing-doubling the component $L^B_U$. This argument can be iterated until $B$ is extended to $N$, exhibiting the link $L^N=\cup_{i,r}\partial\Delta_i^r$ as
an iterated Bing double of the Hopf link along the 
tree $t_p=\langle W_I,W_J\rangle$. 
\end{proof}

\begin{lem}\label{lem:twisted-tree-bing-link}
Let $\cW\subset X$ be a split Whitney tower in a $4$--manifold $X$, and let $J^\iinfty\in t(\cW)$ be a twisted order $n$ tree corresponding to a clean twisted Whitney disk $W_J$, with $\omega(W_J)\in\Z$. Then there exists a regular $4$--ball neighborhood $N\subset X$ of $J^\iinfty$ such that:
\begin{enumerate}
\item
$L^N=\partial N\cap \cW$ is an $(n+2)$-component colored iterated untwisted Bing-double of the twisted-Bing double of the unknot along the tree $J^\iinfty$ with twisting $\omega(W_J)$, and
\item
$N\cap \cW$ is a colored split Whitney tower in $N$ on embedded disks bounded by $L^N$ with $t(N\cap \cW)=\omega(W_J)\cdot J^\iinfty$.
\end{enumerate}
\end{lem}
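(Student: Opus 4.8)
The plan is to follow the structure of the proof of Lemma~\ref{lem:framed-tree-bing-link} almost verbatim, modifying only the innermost step where the twisted Whitney disk $W_J$ is encountered. Since $\cW$ is split, $W_J$ is clean, embedded, and carries twisting $\omega(W_J)$, while all the lower-order Whitney disks supporting $W_J$ (those containing the other trivalent vertices of $J^\iinfty$) are framed and embedded. As before, I would first arrange the embedding of the tree $J^\iinfty$ in $\cW$ so that each $i$-labeled univalent edge terminates at the order-$0$ sheet $W_i$ in a boundary arc of an adjacent Whitney disk, and then take $N$ to be a small regular $4$--ball neighborhood of the union of Whitney disks containing trivalent vertices of $J^\iinfty$, shrunk enough that $N$ meets each $W_i$ in embedded disk-neighborhoods $\Delta_i^r$ around the relevant boundary arcs. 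The link $L^N=\partial N\cap\cW=\cup_{i,r}\partial\Delta_i^r$ then has $n+2$ components, one for each univalent vertex of $J^\iinfty$.

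The key difference is at the base of the induction. Instead of starting the $4$--ball expansion at some interior intersection point $p$, I would start at the twisted Whitney disk $W_J$ itself, which pairs intersections $p$ and $q$ between two sheets $W_U$ and $W_V$ (where $(U,V)$ is the rooted tree associated to $W_J$, possibly with $U=V$). Taking a $4$--ball thickening $B\cong W_J\times D^2$ of $W_J$ inside $N$, the link $L^B=\partial\Delta_U\cup\partial\Delta_V\subset\partial B$ is, by exactly the Observation in the proof of Lemma~\ref{lem:framed-tree-bing-link}, a band sum of two oppositely-signed Hopf links and hence a Bing-double of the unknotted boundary of $W_J$; but now, because $W_J$ has twisting $\omega(W_J)$ relative to $W_U$ and $W_V$ around $\partial_U W_J\cup\partial_V W_J$, this Bing-double is \emph{$\omega(W_J)$-twisted} rather than untwisted. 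So the base case of the induction produces the $\omega(W_J)$-twisted Bing-double of the unknot, i.e.~$T^\iinfty$ of an $\omega$-twisted unknot on the tree consisting of the single edge $(U,V)$-direction.

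From there the inductive step is identical to the framed case: whenever a sheet $W_U$ bounding a piece of the current link is itself a Whitney disk $W_{(U_1,U_2)}$ of positive order (necessarily framed and embedded, by splitness), I extend the $4$--ball $B$ to a larger $4$--ball $B'$ containing a thickening of $W_U$, which by the same Observation (applied to the framed, hence untwisted, case) replaces the corresponding link component by its untwisted Bing-double. Iterating until $B$ has been enlarged to all of $N$ exhibits $L^N$ as an iterated untwisted Bing-double of the $\omega(W_J)$-twisted Bing-double of the unknot, along the tree $J^\iinfty$ with twisting $\omega(W_J)$, which is statement~(1). Statement~(2) follows because $N\cap\cW$ is by construction a split Whitney tower on the embedded disks $\Delta_i^r\cup$(the framed Whitney disks in $N$)$\cup W_J$ bounded by $L^N$, and its only nontrivial intersection datum is the twisting $\omega(W_J)$ of the clean twisted disk $W_J$, so $t(N\cap\cW)=\omega(W_J)\cdot J^\iinfty$.

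I do not expect any serious obstacle: the only real content beyond Lemma~\ref{lem:framed-tree-bing-link} is the observation that, in the Observation's band-sum-of-Hopf-links picture, the twisting of the resulting Bing-double equals the relative twisting $\omega(W)$ of the Whitney disk — and this is already recorded (and used) in the proof of Lemma~\ref{lem:framed-tree-bing-link}, where it was simply specialized to $\omega(W)=0$. The mild bookkeeping point to be careful about is the definition of $T^\iinfty(L)$ for colored links and checking that the inductive labeling of new univalent vertices by the colors of the newly Bing-doubled components matches the labels on $J^\iinfty$; but this is routine and parallels the framed case exactly.
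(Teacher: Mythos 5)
Your proposal is correct and follows essentially the same route as the paper: begin the $4$--ball expansion at the clean twisted Whitney disk $W_J$, note that the Observation from the proof of Lemma~\ref{lem:framed-tree-bing-link} yields the $\omega(W_J)$-twisted Bing-double of the unknot there, and then run the framed inductive step outward exactly as before. The paper's proof is just a terser version of this same argument.
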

\begin{proof}
Starting with the Observation in the proof of Lemma~\ref{lem:framed-tree-bing-link}, a regular $4$--ball neighborhood $B$ of the embedded $W_J=W_{(J_1,J_2)}$ intersects $W_{J_1}$ and $W_{J_2}$ in an $\omega(W_J)$-twisted Bing double of the unknot. The rest of the proof of Lemma~\ref{lem:framed-tree-bing-link} applies essentially word for word to complete the proof of this lemma.
\end{proof}

\subsection{Iterated Bing-doubles and clasper surgery}\label{subsec:bing-clasper} 
It is well-known that the effect of tree clasper surgery on an unlink corresponds to iterated Bing doubling \cite[Sec.7.1]{Hab00}. 
This follows from the observation that a Y-clasper surgery ties three strands of a link into a local Borromean Rings which is a Bing double of a Hopf link, see e.g.~\cite[Fig.34]{Hab00}. And surgery on a twisted Y-clasper with one clean $\omega$-twisted leaf ties the other two strands into
an $\omega$-twisted Bing-double of the unknot, see e.g.~\cite[Move 10]{Hab00}.

For future reference we state this as a lemma:
\begin{lem}\label{lem:clasper-realizes-bing-double}
If a link $L$ is a band-sum of iterated Bing-doubles
along a linear combination $z$ of trees (possibly including some twisted trees), then $L$ is obtained by clasper surgery along a collection $\cC$ of claspers on an unlink such that $t(\cC)= z$.$\hfill\square$
\end{lem}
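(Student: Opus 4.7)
The plan is to reduce the statement to the case of a single (framed or twisted) tree summand and then induct on the order of that tree, using the order-$1$ cases recalled in the paragraph immediately preceding the lemma as base cases. Write $z = \sum_i \epsilon_i \cdot t_i + \sum_j \omega_j \cdot T_j^\iinfty$. Because $L$ is a band-sum of iterated Bing-doubles, each summand of $z$ contributes an iterated Bing-double localized in a disjoint $3$-ball inside $S^3$. If each such local iterated Bing-double can be realized by a single clasper surgery on the corresponding sub-unlink, then placing all these claspers in their disjoint $3$-balls yields a collection $\cC$ on the global unlink whose intersection forest is the disjoint union (formal sum) of the local intersection forests, so Definition~\ref{def:clasper-forests} gives $t(\cC) = z$. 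Signs $\epsilon_i$ are absorbed into clasper orientations via the discussion of $\epsilon_\Gamma$ in Section~\ref{subsec:clasper-forest}, and integer multiplicities $|\omega_j| > 1$ correspond to the twisting $\omega(\Gamma^\iinfty)$ of the single $\omega$-twisted leaf.

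This reduction leaves the single-tree case, which I prove by induction on tree order. The base case of order $1$ is precisely the content of the two Habiro facts quoted just before the lemma: a signed Y-clasper on the $3$-component unlink realizes the signed Bing-double of the Hopf link along $\pm\langle (i,j),k\rangle$, and a twisted Y-clasper with clean $\omega$-twisted leaf on the $2$-component unlink realizes the $\omega$-twisted Bing-double of the unknot along $\omega\cdot(i,j)^\iinfty$. For the inductive step with a tree $t$ of order $n+1 \geq 2$, locate a trivalent vertex $v$ of $t$ adjacent to two univalent vertices $u_1, u_2$ (such a \emph{cherry} exists in any finite trivalent tree of order $\geq 2$: take a trivalent vertex maximizing distance from any fixed basepoint, and its two outward neighbors must both be leaves). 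Let $t'$ be the order-$n$ tree obtained by deleting the edges at $u_1$ and $u_2$, so that $v$ becomes a new univalent vertex in $t'$. The iterated Bing-double along $t$ equals one further untwisted Bing-doubling, at the $v$-indexed component, of the iterated Bing-double along $t'$. By induction there is a simple tree-clasper $\Gamma'$ realizing the Bing-double along $t'$, and the additional Bing-doubling at $v$ is effected by an extra Y-clasper whose three leaves meet $u_1$, $u_2$, and the $v$-indexed strand. Habiro's fusion move merges $\Gamma'$ and this Y-clasper along the leaf at $v$ into a single simple tree clasper $\Gamma$ whose underlying tree is precisely $t$, closing the induction. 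The twisted case is identical, starting from a twisted Y-clasper and using untwisted Y-fusions in the induction.

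The main (and really only) technical ingredient is Habiro's clasper calculus \cite[Sec.~2.3]{Hab00}, specifically the fusion move used to merge two claspers sharing a leaf-interaction into a single tree clasper. The obstacle is not existence but bookkeeping: one must verify that the underlying trees combine by edge-identification, that signs multiply correctly (the sign $\epsilon_\Gamma$ is a product of leaf signs, each preserved by fusion), that leaf labels descend to the expected labels on $t$, and that the single twisted leaf in the twisted case remains untouched throughout the induction so that the twisting $\omega$ descends to $\omega(\Gamma^\iinfty)$. All of these are routine consequences of the standard calculus, completing the proof.
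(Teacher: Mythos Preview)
The paper does not give a detailed proof of this lemma; it is stated with a $\square$ immediately after the paragraph citing Habiro for the well-known correspondence between tree clasper surgery on an unlink and iterated Bing-doubling. Your reduction to a single tree and induction on order is the natural way to unpack that citation, and your base cases are exactly what the paper quotes.

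Your inductive step, however, does not go through as written. You place both the $v$-leaf of $\Gamma'$ and one leaf of the new $Y$-clasper as meridians to the same $v$-indexed strand, then invoke a ``fusion move'' to merge them. Habiro's Move~2 (the move that fuses or splits a tree clasper at an edge) requires the two leaves being merged to form a Hopf link \emph{with each other}; two parallel meridians to a common strand are unlinked from each other, so Move~2 does not apply. And even granting some fusion, the $v$-strand would survive as a split unknotted component, leaving $n+4$ components rather than the $n+3$ of the Bing-double along $t$. The repair is the step you are gesturing at but not stating precisely: replace the simple $v$-leaf of $\Gamma'$ by a \emph{node} carrying two new simple leaves on fresh unlink components $u_1,u_2$, and simultaneously delete the $v$-strand from the unlink. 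That this clasper modification corresponds on the surgery side to Bing-doubling the $v$-component is exactly Habiro's Move~2 read in the splitting direction: cut the new node to obtain $\Gamma'$ together with a $Y$-clasper whose third leaf now Hopf-links the $v$-leaf of $\Gamma'$ (not a link strand), and the $Y$-surgery produces the Borromean/Bing pattern with that Hopf-linked leaf standing in for the $v$-component. With this corrected inductive step your argument is complete and matches the content of \cite[Sec.~7.1]{Hab00}.
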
 

\section{Clasper concordance and Whitney towers}\label{sec:Ck-conc-and-w-towers}
Theorem~\ref{thm:clasper-concordance} in the introduction follows from Theorem~\ref{thm:w-tower-forest-implies-clasper-forest} and Theorem~\ref{thm:clasper-forest-implies-w-tower-forest} in this section.


\subsection{From Whitney towers to clasper concordance}\label{subsec:w-tower-to-clasper-concordance}

The following theorem implies the ``only if'' direction of Theorem~\ref{thm:clasper-concordance}:
\begin{thm}\label{thm:w-tower-forest-implies-clasper-forest}
If a link $L$ bounds a Whitney tower $\cW$ with intersection forest $t(\cW)$, then the unlink is clasper concordant to $L$
via a concordance, followed by clasper surgeries on a collection $\cC$ of claspers with $t(\cC)=t(\cW)$, followed by another concordance. 
\end{thm}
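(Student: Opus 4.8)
The plan is to reduce the statement to the local models established in Section~\ref{sec:bing-doubles}. First I would apply Lemma~\ref{lem:split-w-tower} to replace $\cW$ by a split Whitney tower $\cW'$ on immersed disks $A'$ bounded by $L$ with $t(\cW')=t(\cW)$; this costs nothing since $L=\partial A=\partial A'$. Next I would choose disjoint regular $4$--ball neighborhoods $N_1,\dots,N_r\subset B^4$, one for each unpaired intersection $p$ and one for each non-trivially twisted (clean) Whitney disk $W_J$ in $\cW'$, using Lemma~\ref{lem:framed-tree-bing-link} and Lemma~\ref{lem:twisted-tree-bing-link} so that the boundary link $L^{N_i}:=\partial N_i\cap\cW'$ is a colored iterated (possibly twisted) Bing double of a Hopf link (resp.\ of a twisted unknot) along the tree $\epsilon_p\cdot t_p$ (resp.\ $\omega(W_J)\cdot J^\iinfty$), and so that $N_i\cap\cW'$ is a colored split Whitney tower on embedded disks with the corresponding one-term intersection forest. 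After removing the interiors of the $N_i$ from $B^4$, the remaining piece $\cW'\setminus\bigcup\int N_i$ has no singularities and no twistings, so the surface $A'\setminus\bigcup\int N_i$ is a properly embedded collection of surfaces in $B^4\setminus\bigcup\int N_i$ whose boundary on $S^3$ is $L$ and whose boundary on each $\partial N_i$ is $L^{N_i}$.

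The heart of the argument is then to interpret this embedded cobordism. Collapsing each $N_i$ to a point (or viewing $B^4\setminus\bigcup\int N_i$ as a cobordism), the embedded surface exhibits $L$ as obtained, up to concordance in $S^3\times[0,1]$, from a split union (band sum) of the boundary links $L^{N_i}$. Here I would be careful about the colored-to-genuine bookkeeping: the coloring maps of Lemmas~\ref{lem:framed-tree-bing-link}--\ref{lem:twisted-tree-bing-link} record how the $n_i+2$ components of each local Bing double sit over the actual components of $L$, and the embedded surface in the complement precisely realizes the band sums that merge these colored components back into $L$. Since each $L^{N_i}$ is an iterated Bing double of a Hopf link or a twisted unknot along the single tree $\epsilon_p\cdot t_p$ or $\omega(W_J)\cdot J^\iinfty$, Lemma~\ref{lem:clasper-realizes-bing-double} says the band sum of all the $L^{N_i}$ is clasper surgery on an unlink along a collection $\cC$ of (simple and twisted) claspers with $t(\cC)=\amalg_i\epsilon_{p_i}\cdot t_{p_i}+\amalg_j\omega(W_{J_j})\cdot J_j^\iinfty=t(\cW)$.

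Assembling the pieces gives the asserted sequence: start from the unlink, do the clasper surgeries $\cC$ to produce the band sum $L_0:=\#_i L^{N_i}$ of local Bing doubles with $t(\cC)=t(\cW)$, then follow by the concordance in $S^3\times[0,1]$ coming from the embedded surface $A'\setminus\bigcup\int N_i$ carrying $L_0$ to $L$. To match the precise wording of the theorem (concordance, then clasper surgeries, then concordance) I would insert a trivial first concordance, or equivalently note that the unlink is isotopic to the unlink underlying $\cC$; the final concordance is the genuine content. The main obstacle I anticipate is the gluing step: verifying that the complement $B^4\setminus\bigcup\int N_i$ with the embedded, untwisted, singularity-free remainder of $\cW'$ really is a concordance between the band sum $\#_i L^{N_i}$ and $L$ — in particular checking that the $N_i$ can be chosen disjoint and with connected, standardly-embedded complement pieces, that the framings/twistings all localize into the $N_i$ as claimed by the Observation in the proof of Lemma~\ref{lem:framed-tree-bing-link}, and that the band-sum structure produced by the embedded surface is exactly the one to which Lemma~\ref{lem:clasper-realizes-bing-double} applies. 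The rest is bookkeeping of colors, signs and orientations, which the intersection-forest formalism is designed to track.
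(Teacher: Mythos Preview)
Your decomposition---split the tower, excise disjoint $4$-ball neighborhoods $N_i$ via Lemmas~\ref{lem:framed-tree-bing-link} and~\ref{lem:twisted-tree-bing-link}, and read off the embedded remainder as a cobordism---is exactly the paper's set-up (the paper tubes the $N_i$ together into a single $N$ so that $B^4\setminus\int N\cong S^3\times[0,1]$, which you should also do rather than ``collapsing each $N_i$ to a point'').

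The gap is in your last two paragraphs. The embedded remainder $P=A'\cap(S^3\times[0,1])$ is only a planar \emph{cobordism} from $L$ to $L^N=\amalg_iL^{N_i}$: it changes the number of components and will in general have local maxima. Converting this into the two honest concordances of the theorem is the actual content of the argument, and the first of those concordances is \emph{not} trivial. Concretely, the paper punctures the maxima of $P$ (producing extra unknotted boundary circles $U^k$ on the $\partial N$ side), factors the resulting maxima-free cobordism as a concordance $C^+\colon L\to L^+$ followed by saddles only, and then pushes those saddles into $\partial N$ to obtain a concordance from $L^+$ to an internal band sum $L'=\#_b(L^N\amalg U^k)$ along a specific collection of bands $b$ determined by $P$. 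Since the claspers $\Gamma$ realizing the local Bing doubles live inside the $N_i$ and are disjoint from $b$, surgery along $\Gamma$ carries $L'':=\#_b\,U^{n+2+k}$ to $L'$. But $L''$ is in general \emph{not} an unlink: the bands $b$ come from the arbitrary immersed order-$0$ disks and can be knotted or mutually linked. The required first concordance is the band-cutting concordance $C^-$ from $U^m$ to $L''$.

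So your sentence ``insert a trivial first concordance, or equivalently note that the unlink is isotopic to the unlink underlying~$\cC$'' is exactly where the argument breaks: the link on which $\cC$ acts is $L''$, which is only \emph{concordant} to $U^m$, not isotopic to it. Appealing to Lemma~\ref{lem:clasper-realizes-bing-double} does not close the gap unless you separately show that the claspers can be re-embedded so as to absorb the bands $b$ into their edges, making the underlying link genuinely $U^m$; that would require its own justification and is not the route the paper takes.
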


\begin{rem}
Theorem~\ref{thm:w-tower-forest-implies-clasper-forest} plays a fundamental role in the proof of the main result of  \cite{CST7} which describes Cochran's $\beta$-invariants in terms of Whitney disk twistings.
\end{rem}

\begin{figure}[ht!]
        \centerline{\includegraphics[scale=.325]{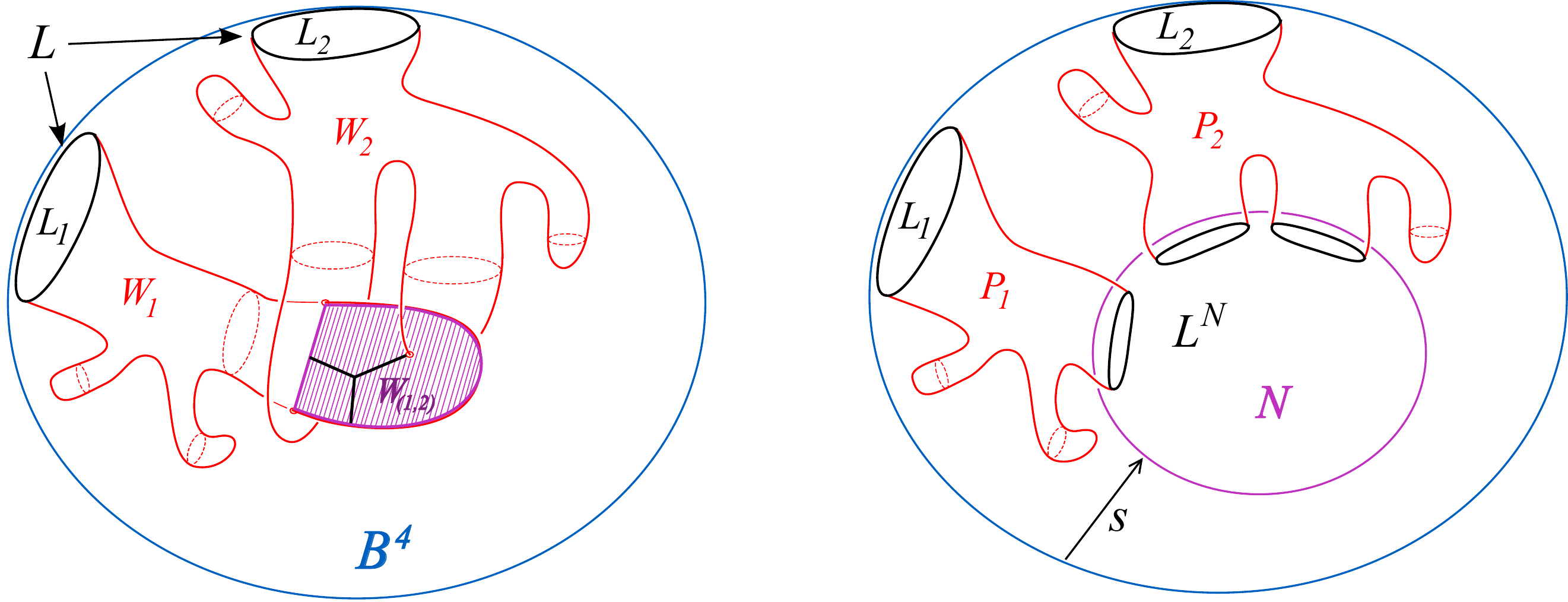}}
        \caption{Left: $L=L_1\cup L_2$ bounds $\cW$ with $t(\cW)=\pm\cdot t_p=\pm\langle (1,2),2\rangle$. Right: The link $L^N=\cW\cap \partial N$, where $N$ is a $4$--ball neighborhood of $t_p$ containing all singularities of $\cW$, and the planar surface cobordism $P=P_1\cup P_2\subset B^4\setminus \textrm{int}(N) \cong S^3\times[0,1]$ from $L$ to $L^N$. (The $s$-arrow indicates the interval factor of $S^3\times[0,1]$.)}
        \label{fig:w-tower-to-clasper-1AB}
\end{figure}

\begin{proof} 

We first consider the cases where $t(\cW)$ consists of a single framed or twisted tree.
The general case will follow easily from these cases.

\textbf{Case 1:}
Let $L$ be an $m$-component link bounding a Whitney tower $\cW$ which has just a single unpaired intersection 
point $p$, which is of order $n$, and all Whitney disks in $\cW$ are framed. So the intersection forest $t(\mathcal W)$ consists of a single signed tree $\epsilon_p\cdot t_p$ of order $n$.

As per Lemma~\ref{lem:split-w-tower}, we can assume that $\cW$ is split, so the Whitney disks of $\cW$ are each embedded, and the singularities in the interior of each Whitney disk consist of either a single boundary arc of a high-order Whitney disk or the unpaired intersection $p$.

By Lemma~\ref{lem:framed-tree-bing-link}
there exists
a regular $4$--ball neighborhood $N$ of the Whitney disks containing $t_p\subset\cW$ such that the link
$L^N=\partial N\cap\cW$ is an $(n+2)$-component colored iterated Bing-double of the Hopf link, bounding embedded disks which support the Whitney tower $N\cap\cW$ in $N$, with $t(N\cap\cW)=t_p$ (see schematic picture in Figure~\ref{fig:w-tower-to-clasper-1AB}).

\begin{figure}[ht!]
        \centerline{\includegraphics[scale=.35]{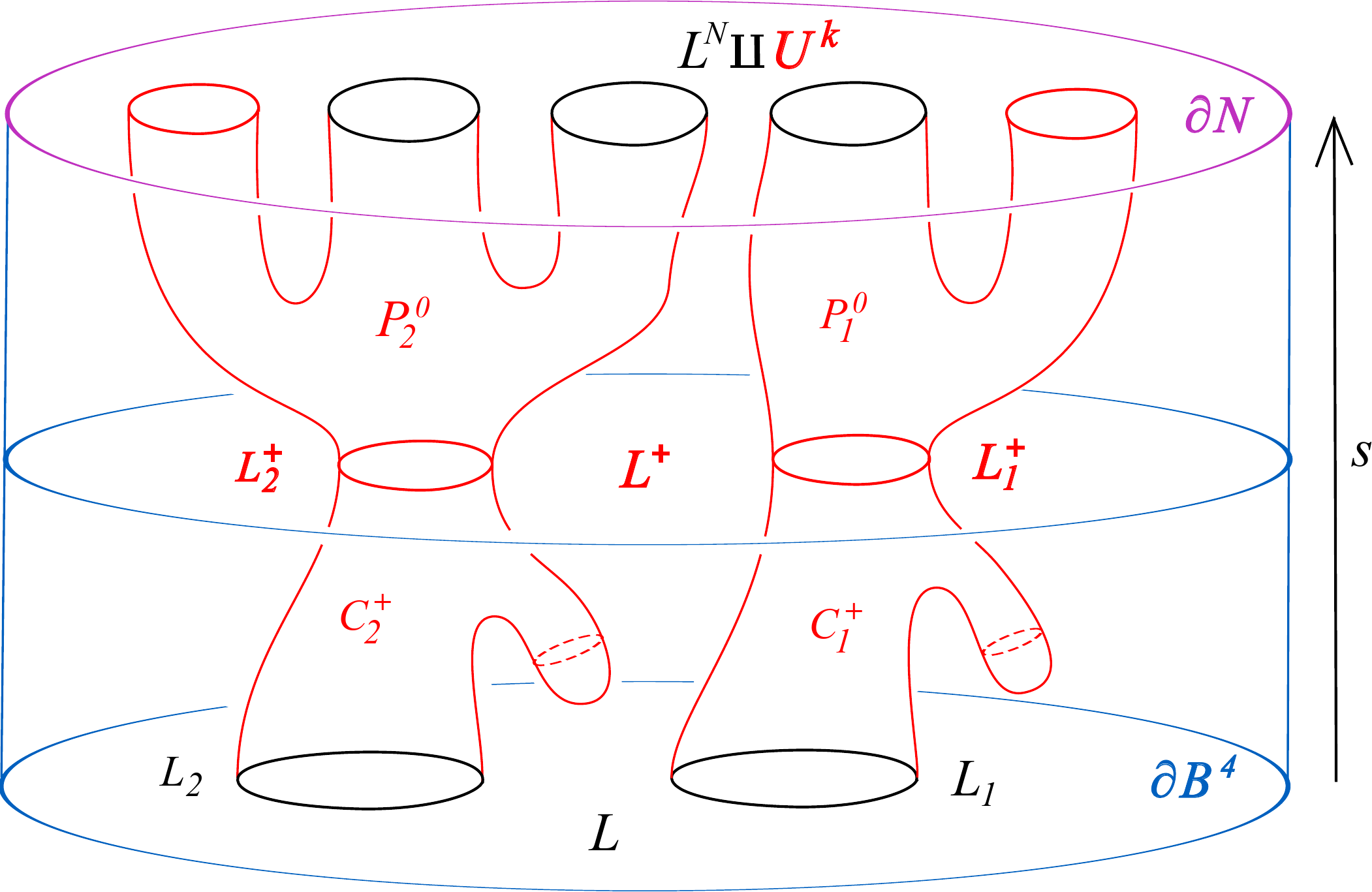}}
        \caption{The cobordism $C^+\cup P^0\subset S^3\times[0,1]$ from $L$ to $L^N\amalg U^k$.}
        \label{fig:w-tower-to-clasper-2AB}
\end{figure}
\begin{figure}[ht!]
        \centerline{\includegraphics[scale=.3]{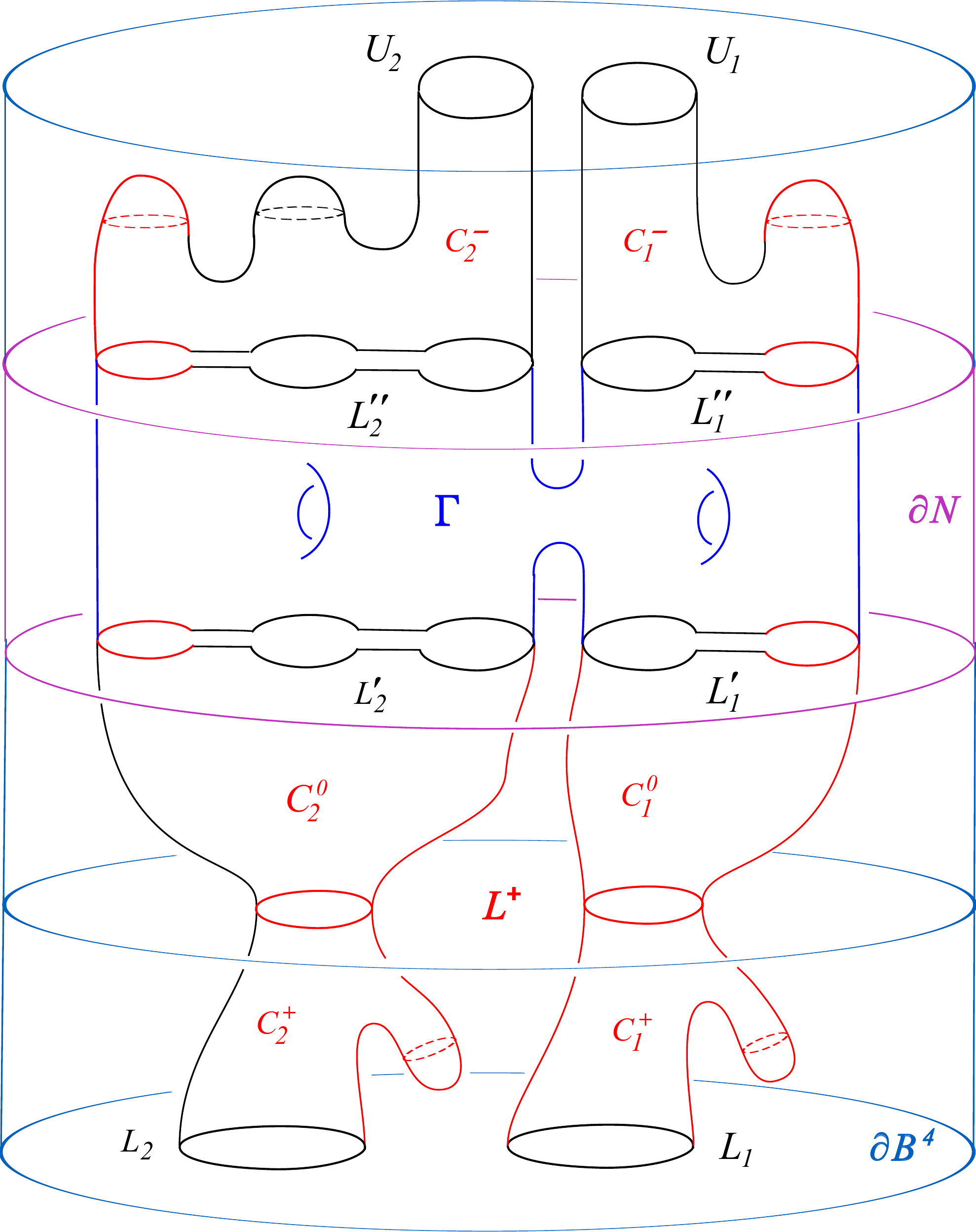}}
        \caption{The bottom section is the same as in Figure~\ref{fig:w-tower-to-clasper-2AB}. The second from bottom section shows the concordance $C^0$ from $L^+$ to $L'$ formed by pushing the punctured maxima and saddles of $P^0$ (Figure~\ref{fig:w-tower-to-clasper-2AB}) into $\partial N$ . The third section from bottom (purple) depicts clasper-surgery on $\Gamma$ as (the bottom stage of) a $3$-dimensional grope cobordism between $L'$ and $L''$ entirely contained in the $3$-dimensional slice $\partial N$. The top section shows the `band-cutting' concordance $C^-$ from $L''$ (an internal band sum of an $(n+2+k)$-component unlink) to an $m$-component unlink $U^m$.}
        \label{fig:w-tower-to-clasper-3A}
\end{figure}

Now $B^4\setminus \textrm{int}(N) \cong S^3\times[0,1]$, and $\mathcal W\cap (B^4\setminus \textrm{int}(N))$ is an $m$-component planar surface $P$ which is a cobordism from $L\subset S^3\times \{0\}$ to the iterated Bing-double $L^N\subset S^3\times \{1\}$. We can assume all minima come before all maxima. Alter $P$ by puncturing the local maxima and stretching them upward to meet $S^3\times\{1\}=\partial N$. Then we have a cobordism from $L$ to a split union $L^N\amalg U^k$, where $U^k$ is a $k$-component unlink (with $k$ the number of maxima of $P$), and this cobordism has no local maxima. We can factor this cobordism as a concordance $C^+$ from $L$ to some $m$-component link $L^+$, followed by a cobordism $P^0$ from $L^+$ to $L^N\amalg U^k$ which only has saddles (Figure~\ref{fig:w-tower-to-clasper-2AB}). Pushing the saddles of $P^0$ up into $\partial N$ yields a concordance $C^0$ from $L^+$ to an $m$-component link $L'=\#_b(L^N\amalg U^k)\subset \partial N$, which is an internal band sum of $L^N\amalg U^k$ along some collection $b$ of bands preserving components of $P^0$ (bottom two sections of Figure~\ref{fig:w-tower-to-clasper-3A}).


Let $\Gamma\subset \partial N$ be a capped tree clasper on an $(n+2+k)$-component unlink $U^{n+2+k}$ such that $\Gamma(U^{n+2})=L^N$ and $\Gamma(U^{n+2+k})=L^N\amalg U^k$, with $\epsilon_\Gamma\cdot t(\Gamma)=\epsilon_p\cdot t_p$. Such a $\Gamma$ exists because $L^N$ is the $(n+2)$-component iterated Bing-double of an $\epsilon_p$-Hopf link along $t_p$ (Lemma~\ref{lem:clasper-realizes-bing-double}). 

Since the bands $b$ and $\Gamma$ can be assumed to be disjoint, $\Gamma$ is also a clasper on the 
link $L''=\sharp_bU^{n+2+k}$, with the same associated tree $t(\Gamma)=\epsilon\cdot t_p=t(\cW)$ as $\cW$, and the result 
of clasper surgery by $\Gamma$ on $L''$ is equal to $L'$.


Now $L''$ is concordant to an $m$-component unlink $U^m$ via a concordance $C^-$ with $n+2+k-m$ minima and $n+2+k-m$ saddles which ``cuts the bands" (Figure~\ref{fig:w-tower-to-clasper-3A}). 

This completes the case where $t(\cW)=t_p$.

\textbf{Case 2:}
Now assume that $\cW$ has no unpaired intersections, and just a single twisted Whitney disk $W_J$ with twisting $\omega(W_J)\in\Z$.
This case only differs from Case 1 in that we apply Lemma~\ref{lem:twisted-tree-bing-link} instead of Lemma~\ref{lem:framed-tree-bing-link}.

\textbf{General case:}
For $t(\cW)$ consisting of multiple trees, by the above arguments there exist disjoint $4$--ball neighborhoods $N_r$ of the trees in $t(\cW)$ such that $\partial N_r\cap\cW$ are iterated Bing-doubles of Hopf links and/or of twisted Bing-doubles of the unknot. Connecting the $N_r$ by thickenings of arcs that miss $\mathcal W$ yields a single $4$-ball $N$, and the constructions proceed as before, except that $\Gamma$ will now be a collection of disjoint claspers $\Gamma_r$. 
\end{proof}


\subsection{From clasper concordance to Whitney towers}\label{subsec:clasper-to-w-tower-concordance}

The following theorem implies the ``if'' direction of Theorem~\ref{thm:clasper-concordance} from the introduction. It follows directly from \cite{CT1,CT2,CST}, but we state it here for completeness, and outline the proof:
\begin{thm}\label{thm:clasper-forest-implies-w-tower-forest}
If a link $L$ is (twisted) clasper concordant to the unlink, then $L$ bounds a (twisted) Whitney tower $\cW$ with intersection forest $t(\cW)=\amalg_i\, t(\cC_i)$, where $\cC_i$ are the collections of claspers in the clasper concordance. 
\end{thm}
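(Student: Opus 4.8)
The plan is to assemble the Whitney tower by combining standard local pieces, one for each clasper in the concordance, using the dictionary between clasper surgery and capped grope cobordism (Theorem~\ref{thm:clasper-surgery-equals-grope-cobordism}) together with its four-dimensional upgrade to grope \emph{concordance} (Theorem~\ref{thm:clasper-concordance-equals-grope-concordance}). The starting observation is that a clasper concordance from $L$ to the unlink $U$ is, by Theorem~\ref{thm:clasper-concordance-equals-grope-concordance}, realized by a single grope concordance $G$ in $S^3\times I$ from $L$ to $U$ whose associated collection of trees is $t(G)=\amalg_i\, t(\cC_i)$. So the task reduces to: given a grope concordance from $L$ to the unlink with associated trees $z=\amalg_i t(\cC_i)$, produce a (twisted) Whitney tower bounded by $L$ in $B^4$ with intersection forest equal to $z$.

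The next step is to convert the grope concordance into a Whitney tower in $B^4$. First cap off the unlink end of the grope concordance $G\subset S^3\times I$ with standard disjoint embedded disks in $B^4$; this turns $G$ into a capped grope $G'$ in $B^4$ whose bottom stage is bounded by $L$ and whose associated trees are still $z$ (the caps are clean and contribute nothing). Now apply the standard grope-to-Whitney-tower conversion: a capped grope of class $k$ (equivalently, with branches/trees of order $k-1$) bounded by $L$ in a $4$--manifold can be resolved into a Whitney tower bounded by $L$ with the same associated trees — this is the grope/Whitney tower correspondence of \cite{CT1,CST} (see also \cite{S1,ST2}). In the twisted setting, a grope branch carrying at most one twisted clean cap converts to a twisted Whitney disk of the corresponding order carrying the matching $\iinfty$-tree with the same integer twisting coefficient, by the twisted analogue of the same correspondence as developed in \cite{CST1}. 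Thus one obtains a (twisted) Whitney tower $\cW$ on immersed disks bounded by $L$ in $B^4$ with $t(\cW)=t(G')=z=\amalg_i t(\cC_i)$, which is exactly the claim. Since concordances are themselves (trivially) clasper concordances contributing no trees, they are absorbed harmlessly into $G$ and do not affect $t(\cW)$.

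A few bookkeeping points deserve care. One must check that the grope concordances produced by Theorem~\ref{thm:clasper-concordance-equals-grope-concordance} can be made \emph{disjoint} for the distinct tree-summands (so that the resulting Whitney disks and their trees are embedded and the forest is a genuine disjoint union); this follows from general position once the construction is performed in a neighborhood of the spine, exactly as in the cited references. One must also track signs and orientations: the sign $\epsilon_\Gamma$ of a clasper, the sign $\epsilon_p$ of an unpaired intersection, and the sign assigned to a grope tree are all defined compatibly in \cite{CT2,CST,CST1}, so the equality $t(\cW)=\amalg_i t(\cC_i)$ holds as an equality of multisets of \emph{signed} (and, for $\iinfty$-trees, integer-coefficient) trees, not merely unsigned ones. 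Finally, the degree/order bookkeeping for the twisted case — a degree $k$ twisted clasper has framed trees of degree $\geq k$ and $\iinfty$-trees of degree $\geq k/2$, matching exactly the order conventions of Definition~\ref{def:w-tower-order}(2) — is an immediate consequence of the tree equality.

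The main obstacle is not conceptual but one of citation management: the grope-to-Whitney-tower resolution with \emph{precise} control of the associated trees (including the twisted $\iinfty$-trees and their integer coefficients) is exactly the content of \cite{CT1,CT2,CST,CST1}, so the real work is assembling the correct references and verifying that the twisted refinement is covered there; since the theorem statement explicitly says it "follows directly from \cite{CT1,CT2,CST}", the expected write-up is a short outline rather than a from-scratch argument.
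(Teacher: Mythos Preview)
Your proposal is correct and follows essentially the same route as the paper's proof: invoke Theorem~\ref{thm:clasper-concordance-equals-grope-concordance} to obtain a (twisted) grope concordance from $L$ to the unlink with the correct trees, then convert the grope to a Whitney tower via the standard grope--Whitney tower correspondence (the paper cites \cite[Thm.6]{S1} and \cite[Fig.10]{CST2} for the twisted case) and cap off the unlink end. The only cosmetic difference is that you cap off before converting while the paper surgers first and caps off after; this is immaterial, and your additional remarks on signs, disjointness, and degree bookkeeping are accurate but more detailed than the paper's two-sentence outline.
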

\begin{proof}
The (twisted) capped grope concordance between $L$ and the unlink from Theorem~\ref{thm:clasper-concordance-equals-grope-concordance} can be surgered to a (twisted) Whitney tower by \cite[Thm.6]{S1} and capped off.
(The correspondence between twisted caps and twisted Whitney disks is illustrated in e.g.~\cite[Fig.10]{CST2}.)
\end{proof}


\section{Proof of Theorem~\ref{thm:twisted-self-Ck-equals-Milnor-Arf-equals-tower}}\label{sec:proof-thm:twisted-self-Ck-equals-Milnor-Arf-equals-tower}
Theorem~\ref{thm:twisted-self-Ck-equals-Milnor-Arf-equals-tower} states that for an $m$-component link $L$ in the $3$--sphere the following three properties are equivalent for each positive integer $k$:
 $L$ is twisted self $C_k$-concordant to the unlink;
 $L$ bounds a $k$-repeating twisted Whitney tower of order $mk-1$; and,
for all $1\leq r\leq k$, $L$ has vanishing $r$-repeating Milnor invariants and $r$-repeating higher-order Arf invariants. 

The proof given in this section uses $k$-repeating versions of the classification of the order $n$ twisted Whitney tower filtration on links  
given in \cite{CST1,CST2,CST3,CST4}. 
Results from these papers will be summarized as needed during the proofs, with details given for the new aspects that arise in the $k$-repeating setting. Rather than recreating here the many arguments and constructions from \cite{CST1,CST2,CST3,CST4}, the necessary definitions will be given along with descriptions of how to adapt proofs to the current $k$-repeating setting. Inevitably, a complete understanding of Theorem~\ref{thm:twisted-self-Ck-equals-Milnor-Arf-equals-tower} will depend heavily on having already understood \cite{CST1,CST2,CST3,CST4}.

Here is an outline of this section:

\begin{itemize}

\item
Sections~\ref{subsec:multiplicities} fixes notions of multiplicity, section~\ref{subsec:k-rep-twisted-w-towers} defines $k$-repeating (twisted) Whitney towers and shows that a $k$-repeating twisted Whitney tower of order $mk-1$ can be modified to have only mono-labeled trees. This latter result is used in section-\ref{subsec:k-rep-w-towers-and-self-Ck-conc} to show that bounding a $k$-repeating Whitney tower of order $mk-1$ is equivalent to being self $C_k$-concordant to the unlink.

\item
Sections~\ref{subsec:intro-Milnor-review}--\ref{subsec:intro-higher-order-arf}
describe the classification of the twisted Whitney tower filtration on links by order in terms of Milnor invariants and higher-order Arf invariants, as summarized in Corollary~\ref{cor:intro-mu-arf-classify-twisted}.

\item
In sections~\ref{subsec:k-free-lie}--\ref{subsec:k-repeating-twisted-classification} 
this classification is adapted to the twisted $k$-repeating setting, including definitions of the first non-vanishing $k$-repeating total Milnor invariants, the $k$-repeating twisted Whitney tower obstruction theory, and the $k$-repeating higher-order Arf invariants. The proof of Theorem~\ref{thm:twisted-self-Ck-equals-Milnor-Arf-equals-tower} is completed by Corollary~\ref{cor:k-repeating-mu-arf-classify-k-repeating-twisted}.

\end{itemize}

Throughout this section the parameter $k$ denotes a positive integer.

\subsection{Multiplicities}\label{subsec:multiplicities}
Recall from Section~\ref{sec:trees} that our trees are finite, unitrivalent and vertex-oriented.  Univalent vertices come equipped with labels from the index set $\{1,2,\ldots,m\}$, except that a rooted tree has one designated univalent vertex (the root) which is indicated by not having a label, and a $\iinfty$-tree has one univalent vertex which is labeled by the twist symbol $\iinfty$.

For a tree $T$ (rooted or un-rooted) such that $T$ is not a $\iinfty$-tree, the \emph{$i$-multiplicity} $r_i(T)$ is defined to be the number of univalent vertices labeled by the index $i$, and the \emph{multiplicity} $r(T)$ is the maximum of $r_i(T)$ over the index set.

For a $\iinfty$-tree $T^\iinfty$ with underlying rooted tree $T$, the \emph{$i$-multiplicity} and \emph{multiplicity} are defined by $r_i(T^\iinfty)=r_i(\langle T,T\rangle)=2\cdot r_i(T)$ 
 and $r(T^\iinfty)=r( \langle T,T\rangle)=2\cdot r(T)$, respectively. 
 
Via the usual identification of rooted trees with non-associative bracketings, these notions of \emph{$i$-multipicity} and \emph{multiplicity} apply as well to Lie brackets and iterated commutators of generators in a free group.

Similarly, \emph{$i$-multiplicities} and \emph{multiplicities} are defined for intersection points $p$ and Whitney disks $W_J$ in a Whitney tower as the multiplicities of the corresponding trees $t_p$ and $J$, or $J^\iinfty$ if $W_J$ is twisted. 

\subsection{$k$-repeating twisted Whitney towers of order $n$}\label{subsec:k-rep-twisted-w-towers}
The following definition of twisted $k$-repeating Whitney towers of order $n$ is a relaxation of the definition of twisted Whitney towers of order $n$ given
in section~\ref{subsec:w-tower-order}:

%
%
%

\begin{defn}\label{def:k-rep-twisted-w-tower}
A Whitney tower $\cW$ is a \emph{$k$-repeating twisted Whitney tower of order $n$} if every framed tree in $t(\cW)$ has order $\geq n$ or has multiplicity $>k$,
and every $\iinfty$-tree in $t(\cW)$ has order $\geq \frac{n}{2}$ or has multiplicity $>k$.
\end{defn}
So in a $k$-repeating twisted Whitney tower of order $n$ any intersections of multiplicity $>k$ are not required to be paired by Whitney disks, even if they are of order $<n$, and any Whitney disks of multiplicity $>k$ are allowed to be twisted, even if they are of order $< \frac{n}{2}$.

The following result will be used to prove Proposition~\ref{prop:order-mk-1-W-tower-equals-self-Ck-concordance} in the subsequent section showing that a link bounds a $k$-repeating twisted Whitney tower of order $mk-1$ if and only if it is self $C_k$-concordant to the unlink:
\begin{lem}\label{lem:reduce-to-mono-labeled} 
If $A=W_1,\ldots,W_m$ supports a $k$-repeating twisted Whitney tower $\cW$ of order $mk-1$, then $A$ is regularly homotopic rel boundary to $A'$ supporting a $k$-repeating twisted Whitney tower $\cW'$ of order $mk-1$ such that all trees in $t(\cW')$ are mono-labeled. 
\end{lem}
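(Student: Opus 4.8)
The plan is to start from the intersection forest $t(\cW)$ and eliminate all trees that are not mono-labeled, trading them for mono-labeled trees without lowering order below $mk-1$ and without introducing framed trees of order $<mk-1$ and multiplicity $\leq k$ (respectively $\iinfty$-trees of order $<\frac{mk-1}{2}$ and multiplicity $\leq k$). First I would invoke Lemma~\ref{lem:split-w-tower} to assume $\cW$ is split, so every tree in $t(\cW)$ is either a framed tree $\epsilon_p\cdot t_p$ of order exactly $mk-1$ or an $\iinfty$-tree $\omega(W_J)\cdot J^\iinfty$ with the twisted Whitney disk $W_J$ clean and $\omega(W_J)=\pm 1$; the $k$-repeating hypothesis also allows some trees of lower order provided they have multiplicity $>k$, but by the standard order-raising moves (as in Lemma~\ref{lem:collapse-univalent-edges}, cited in section~\ref{subsec:w-tower-order}) I would first push all such low-order, high-multiplicity trees up so that every tree in $t(\cW)$ has order exactly $mk-1$ (framed) or exactly $\frac{mk-1}{2}$ ($\iinfty$), leaving only the task of making these mono-labeled.

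The key observation is a counting/pigeonhole argument: a framed tree of order $mk-1$ has $mk+1$ univalent vertices, and an $\iinfty$-tree of order $\frac{mk-1}{2}$ corresponds via the inner-product doubling to a tree with $mk+1$ univalent vertices as well, so its multiplicity is $2r(J)$ where $\langle J,J\rangle$ has $mk+1$ leaves. If such a tree were \emph{not} mono-labeled, then no single index could account for more than $mk$ of the $mk+1$ univalent vertices in a way that forces multiplicity $>k$ — but in fact the relevant point is the converse: I want to show that any tree surviving in a $k$-repeating twisted Whitney tower of order $mk-1$ \emph{of order exactly $mk-1$} which is not mono-labeled can be geometrically cancelled or converted. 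The natural mechanism is that a non-mono-labeled tree, having two univalent vertices with distinct labels, supports a geometric move (a controlled Whitney move / IHX-type manipulation, or a "label-refining" finger move as developed in \cite{CST1,CST2}) that replaces it by trees of strictly higher order or by trees of multiplicity $>k$ — at which point, since $\cW$ is already of order $mk-1$ with the multiplicity relaxation built in, those new trees are permitted. Iterating this process terminates because order is bounded: once every univalent vertex carries the same label, by the pigeonhole argument a mono-labeled tree of order $mk-1$ has $i$-multiplicity $mk+1>k$ for that label $i$ (indeed $\geq m+1>k$ already once $m\geq 1$, $k\geq 1$), so it automatically satisfies the multiplicity condition and poses no further obstruction.

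The main obstacle I expect is making the "label-refining" move precise and checking it preserves the $k$-repeating order condition at each stage — in particular ensuring that each application genuinely increases a well-founded complexity (e.g.~lexicographically: the number of non-mono-labeled trees, then something measuring how far each is from mono-labeled) so that the process terminates, and verifying that the newly-created intersections and twisted Whitney disks either have order $\geq mk-1$ (resp.~$\geq \frac{mk-1}{2}$) or have multiplicity $>k$. The cleanest route is probably to argue one tree at a time: isolate a non-mono-labeled tree $t_p$ in a $4$-ball neighborhood using Lemmas~\ref{lem:framed-tree-bing-link} and~\ref{lem:twisted-tree-bing-link}, so that locally the relevant link is an iterated Bing double along $t_p$, then use the local model to re-Bing-double / re-clasp in a way that refines the labels, appealing to Lemma~\ref{lem:clasper-realizes-bing-double} and Theorem~\ref{thm:clasper-forest-implies-w-tower-forest} to return to a Whitney tower whose forest differs from $t(\cW)$ only by replacing $\epsilon_p\cdot t_p$ with trees of higher order or of multiplicity $>k$. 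Summing these local modifications over all non-mono-labeled trees yields $A'$ and $\cW'$ as required.
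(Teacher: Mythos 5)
You have correctly identified the two essential ingredients — the pigeonhole observation and Lemma~\ref{lem:collapse-univalent-edges} — but you have the mechanism running in the wrong direction. Lemma~\ref{lem:collapse-univalent-edges} is an order-\emph{lowering} move, not an order-raising one: deleting the Whitney disk whose tree contains the chosen $i$-labeled edge (or doing the corresponding Whitney move) replaces a tree of order $n$ by trees of order $n-1$, while decreasing $r_i$ by one (framed case) or two (twisted case) and leaving every other $j$-multiplicity unchanged. The proof of the present lemma is then immediate: by pigeonhole (which you stated, though somewhat garbled), every tree $T$ in $t(\cW)$ — whether of order $\geq mk-1$ or of low order with multiplicity $>k$ — has some index $j$ with $r_j(T)=r(T)\geq k+1$. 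Collapsing \emph{every} $i$-labeled edge of $T$ for $i\neq j$ produces a tree mono-labeled by $j$; its order has dropped, but its $j$-multiplicity is unchanged and so still $\geq k+1>k$, which is precisely the escape clause in Definition~\ref{def:k-rep-twisted-w-tower}. Nothing has to be pushed \emph{up} in order, and the process trivially terminates because each collapse reduces the finite number of ``wrong'' labels.

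Your proposed alternatives are the gap. The ``label-refining'' or ``re-Bing-doubling'' move you invoke does not exist in the cited literature, and it is not clear such a move could replace a non-mono-labeled tree by trees of \emph{higher} order without also creating lower-order, multiplicity-$\leq k$ debris, so the termination and order bookkeeping you flag as ``the main obstacle'' would in fact be a real problem for that route. Your preliminary step of ``pushing all low-order, high-multiplicity trees up to order exactly $mk-1$'' also misreads the remark in section~\ref{subsec:w-tower-order}: the standard manipulations there let one \emph{eliminate} excess-order trees, not raise the order of low-order ones, and in any case this normalization is not needed — the collapsing argument applies uniformly to all trees, since in a $k$-repeating tower of order $mk-1$ every tree, of whatever order, already has some index of multiplicity $\geq k+1$. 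The detour through Lemmas~\ref{lem:framed-tree-bing-link}, \ref{lem:twisted-tree-bing-link}, \ref{lem:clasper-realizes-bing-double} and Theorem~\ref{thm:clasper-forest-implies-w-tower-forest} is likewise unnecessary overhead once Lemma~\ref{lem:collapse-univalent-edges} is applied in the right direction.
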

Note that since framed trees of order $\geq mk-1$ have $\geq mk+1$ univalent vertices, each tree in $t(\cW)$ has $j$-multiplicity $\geq k$ for some $j$.
The idea of the proof of Lemma~\ref{lem:reduce-to-mono-labeled} is to modify $\cW$ in a way that collapses away the 
other $i$-labeled edges for $i\neq j$ of each tree in $t(\cW)$ to yield the desired $\cW'$.

To state this precisely in the following lemma we introduce the
operation of ``collapsing an $i$-labeled edge'' in a tree $T$ of positive order, which means the following: Given an edge $e$ of $T$ that has a univalent vertex labeled by $i$, retract $e$ onto its trivalent vertex, which is then considered to be an interior point of the single edge formed by fusing the two edges of $T$ that were adjacent to $e$. In the notation of Definition~\ref{def:Tree-ops}, for framed trees this collapsing operation can be described as $\langle (J_1,J_2),i \rangle\mapsto\langle J_1,J_2\rangle$; and for rooted trees or $\iinfty$-trees a subtree 
of the form $(J,i)$ in the original tree goes to the subtree $J$ in the collapsed tree.

\begin{lem}\label{lem:collapse-univalent-edges}
Let $A$ support a split Whitney tower $\cW$.
\begin{enumerate}
\item\label{item:framed-collapse}
Let $t_p$ be a tree of positive order corresponding to an unpaired intersection $p\in\cW$, and let $i$ be an index such that $r_i(t_p)\geq 1$. Then $\cW$ can be modified near $t_p\subset\cW$ to yield $\cW'$ on $A'$ regularly homotopic rel boundary to $A$ such
that $t(\cW')$ contains the same trees as $t(\cW)$ except that $\epsilon_p\cdot t_p$ is replaced by a pair of oppositely signed trees $+t_q$ and $-t_q$, where $t_q$ is the result of collapsing an $i$-labeled edge of $t_p$. So in particular, $r_i( t_q)=r_i(t_p)-1$.

\item
Let $J^\iinfty$ be a twisted tree corresponding to a twisted Whitney disk $W_J\subset\cW$, and let $i$ be an index such that $r_i(J)\geq 1$. Then $\cW$ can be modified near $J^\iinfty\subset\cW$ to yield $\cW'$ on $A'$ regularly homotopic rel boundary to $A$ such
that $t(\cW')$ contains the same trees as $t(\cW)$ except that $\omega(W_J)\cdot J^\iinfty$ is replaced by either
\begin{enumerate}

\item\label{item:twisted-collapse-next-to-rooted-edge}
the signed tree $\omega(W_J)\cdot\langle I,I \rangle$, where $I$ is the result of collapsing an $i$-labeled edge of $J$ which is adjacent to the rooted edge of $J$; or 

\item\label{item:twisted-collapse-away-from-rooted-edge}
two copies of the signed $\iinfty$-tree $\omega(W_J)\cdot I^\iinfty$ plus the signed tree $\omega(W_J)\cdot\langle I,I \rangle$, where $I$ is the result of collapsing an $i$-labeled edge of $J$ which is \emph{not} adjacent to the rooted edge of $J$.

\end{enumerate}

So in particular, $r_i(I^\iinfty)=r_i(\langle I,I \rangle)=r_i(J^\iinfty)-2$.

\end{enumerate}
\end{lem}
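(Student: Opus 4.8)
The plan is to realize each collapse by a single Whitney move, carried out on one of the two lower-order sheets bounding the Whitney disk that contains the trivalent vertex adjacent to the chosen $i$-labeled univalent vertex, and then to read off the change in $t(\cW)$ from the standard local description of a Whitney move. Splitness of $\cW$ is exactly what makes each such move local, so that only the single indicated tree of $t(\cW)$ is affected; after standard manipulations we may also assume the Whitney disks along the relevant branch are embedded. Concretely, let $e$ be the chosen $i$-labeled univalent edge and $v$ the trivalent vertex at its other end. Since $e$ runs from the order-zero sheet $W_i$ into the Whitney disk $W$ containing $v$ by crossing the boundary arc $W\cap W_i$, we have $W=W_{(i,G)}$ for some rooted tree $G$, with $v$ the trivalent vertex of $(i,G)$ incident to the rooted edge, and $W_{(i,G)}$ pairs an oppositely signed pair $a_+,a_-\in W_i\cap W_G$; collapsing $e$ is precisely the operation of replacing the subtree $(i,G)$ by $G$.

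\emph{The framed case, item~\ref{item:framed-collapse}.} Here $W_{(i,G)}$ is framed since it supports the framed tree $t_p$. Perform a Whitney move on $W_G$ using $W_{(i,G)}$. This is supported near $W_{(i,G)}$; it deletes the canceling pair $a_\pm$ (which contributes nothing to $t(\cW)$), consumes $W_{(i,G)}$, and replaces a bigon of $W_G$ by two oppositely oriented parallel copies of $W_{(i,G)}$. By splitness the interior of $W_{(i,G)}$ carries a single singularity: either the unpaired point $p$ itself (when $t_p=\langle (i,G),K\rangle$), or a single boundary arc of the next Whitney disk on the branch recording $p$. Framedness of $W_{(i,G)}$ makes the two parallel copies disjoint, so this singularity --- and hence, inductively up the branch, the whole portion of $\cW$ supporting $p$ --- is doubled into two oppositely oriented copies. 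The result is $\cW'$ in which $\epsilon_p\cdot t_p$ is replaced by $+t_q$ and $-t_q$, with $t_q$ the collapse of $(i,G)$ to $G$ in $t_p$, and in which nothing else changes; in particular $r_i(t_q)=r_i(t_p)-1$.

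\emph{The twisted case, items~\ref{item:twisted-collapse-next-to-rooted-edge} and~\ref{item:twisted-collapse-away-from-rooted-edge}.} If the chosen $i$-edge is incident to the rooted edge of $J$, then $W_{(i,G)}$ is the clean $\omega$-twisted disk $W_J$ itself (so $G=I$), and we perform the Whitney move on $W_I$ using $W_J$: as above it deletes $a_\pm$ and consumes $W_J$, but now the $\omega$-twisting forces the Whitney section over $\partial W_J$ to have $\omega$ zeros over $W_J$, so the two parallel copies of $W_J$ meet each other in $\omega$ signed points, which become $\omega$ self-intersections of the new $W_I$ with tree $\langle I,I\rangle$; since $W_J$ was clean nothing else is doubled, so $\omega(W_J)\cdot J^{\iinfty}$ is replaced by $\omega(W_J)\cdot\langle I,I\rangle$. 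If instead the chosen $i$-edge is not incident to the root, then $v$ lies below the twisted top disk $W_J$, so $W_{(i,G)}$ is framed and the Whitney move on $W_G$ using $W_{(i,G)}$ proceeds exactly as in the framed case until the doubling reaches $W_J$; there $W_J$ is doubled into two still-$\omega$-twisted copies, each with the collapsed tree $I$, contributing $\omega(W_J)\cdot I^{\iinfty}$ twice, while the $\omega$-twisting of $W_J$ makes the section separating the two copies have $\omega$ zeros, producing $\omega$ further intersections between the copies and hence the extra framed term $\omega(W_J)\cdot\langle I,I\rangle$. No other trees change, and $r_i(I^{\iinfty})=r_i(\langle I,I\rangle)=r_i(J^{\iinfty})-2$.

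\emph{Main obstacle.} The delicate step is the bookkeeping in item~\ref{item:twisted-collapse-away-from-rooted-edge}: one must verify that forcing the twisted disk $W_J$ to be doubled yields \emph{exactly} two $\omega$-twisted copies \emph{together with} the $\omega$ intersection points between them --- and not, say, two framed copies, or two twisted copies with no extra intersections --- and, in the framed case, that the two copies of $p$ genuinely acquire opposite signs. Both points reduce to tracking Whitney sections and their relative Euler numbers through the chain of doublings, which is the same analysis underlying the geometric boundary-twist and interior-twist relations used in \cite{CST1}.
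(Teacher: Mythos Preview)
Your argument is correct, and for items~(ii)(a) and~(ii)(b) it essentially coincides with the paper's: both perform the Whitney move guided by the framed disk $W_{(i,G)}$ adjacent to the chosen $i$-edge (or by the twisted $W_J$ itself in case~(ii)(a)) and then recover the higher stages as parallel copies. The paper spells out this parallel-copy reconstruction a bit more explicitly than your phrase ``inductively up the branch, the whole portion of $\cW$ supporting $p$ is doubled,'' but the content is the same.

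Where you diverge is item~(i). You handle the framed case by the same Whitney-move-and-double mechanism, whereas the paper takes a shortcut: it first invokes \cite[Lem.~14]{ST2} to slide the unpaired intersection $p$ along $t_p$ until it sits on the chosen $i$-labeled edge, so that now $p\in W_J\cap W_i$ with $t_p=\langle J,i\rangle$ and $J=(J_1,J_2)$; then it simply \emph{deletes} the single Whitney disk $W_J$, leaving the two intersections in $W_{J_1}\cap W_{J_2}$ that $W_J$ used to pair as the new unpaired points with trees $\pm\langle J_1,J_2\rangle=\pm t_q$. No parallel-copy reconstruction is needed at all. Your uniform approach has the virtue of treating all three cases by a single construction; the paper's approach is shorter for~(i) but imports an external lemma and still needs the doubling argument for~(ii)(b). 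Either route is fine.
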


\begin{proof}[Proof of Lemma~\ref{lem:reduce-to-mono-labeled}]
Apply Lemma~\ref{lem:collapse-univalent-edges} as needed: For each tree $T\in t(\cW)$ there exists $j$ such that $r_j(T)=r(T)\geq k+1$. So collapsing all $i$-labeled edges of $T$ for $i\neq j$ yields a tree mono-labeled by $j$, still of multiplicity $\geq k+1$. 
\end{proof}

We remark that Lemma~\ref{lem:collapse-univalent-edges} can be applied further until all trees are mono-labeled with multiplicity \emph{exactly} $k+1$.

\begin{proof}[Proof of Lemma~\ref{lem:collapse-univalent-edges}]

\textbf{Statement~(\ref{item:framed-collapse}):}
Consider first the case that $t_p=\langle J,i\rangle$ corresponds to an unpaired intersection $p=W_J\cap W_i$ between a Whitney disk $W_J$ and an order zero surface $W_i$, with $r_i(t_p)\geq 1$. Then deleting the Whitney disk $W_J$ from $\cW$ eliminates $p$ and creates two unpaired intersections $\pm q$ which used to be paired by $W_J$. The associated signed trees are $\pm t_q=\pm\langle J_1,J_2\rangle$, where $J=(J_1,J_2)$.

Now recall from section~\ref{subsec:trees-in-towers} that the tree $t_p=\langle J_1,J_2\rangle$ associated to any unpaired intersection $p\in W_{J_1}\cap W_{J_2}$ 
can be embedded $t_p\subset\cW$ with $p$ as an interior point of an edge of $t_p$. The reason that the extra data of this ``preferred'' edge containing $p$ is not taken into account in the intersection forest $t(\cW)$ is that local modifications can ``move'' the unpaired intersection onto any chosen edge of $t_p$, as described in \cite[Lem.14]{ST2} (see Figures~10 and 11 of \cite{ST2} and note that \cite{ST2} uses the dot product notation $J_1\cdot J_2$ for our inner product $\langle J_1, J_2\rangle$).  
So for any $t_p\subset \cW$ with $r_i(t_p)\geq 1$, the previous paragraph can be used to replace $t_p$ by $\pm t_q$ after applying \cite[Lem.14]{ST2} to arrange for an $i$-labeled edge of $t_p$ to contain the unpaired intersection.

\textbf{Statement~(\ref{item:twisted-collapse-next-to-rooted-edge}):}
Let $\pm(I,i)^\iinfty\in t(\cW)$ be associated to a $\pm 1$-twisted Whitney disk $W_{(I,i)}$.
Performing a boundary-twist of $W_{(I,i)}$ into $W_I$ converts $W_{(I,i)}$ into a framed Whitney disk that has a single unpaired intersection with $W_I$ (see \cite[Fig.18]{CST1} or \cite[Chap.1.3]{FQ}). This exchanges $\pm(I,i)^\iinfty$ for $\pm \langle (I,i),I\rangle$ in $t(\cW)$, which can then be exchanged for $\pm\langle I,I\rangle$ by the above proof of Statement~(\ref{item:framed-collapse}). (Alternatively, one could just do the $W_{(I,i)}$-Whitney move on $W_I$ using the original twisted $W_{(I,i)}$. This Whitney move (which is a regular homotopy) would create a single self-intersection of $W_I$ (from the unit twisting) with associated tree $\pm\langle I,I\rangle$.)

\textbf{Statement~(\ref{item:twisted-collapse-away-from-rooted-edge}):}
Let $\pm J^\iinfty\in t(\cW)$ be associated to a $\pm 1$-twisted Whitney disk $W_J$ such that $J$ has an $i$-labeled vertex which is not adjacent to its rooted edge. This means that $J$ contains a subtree of the form $((I_1,i),I_2)$ which corresponds to a Whitney disk $W_{((I_1,i),I_2)}$ supporting $W_J$, or possibly $W_{((I_1,i),I_2)}=W_J$. Do the $W_{(I_1,i)}$-Whitney move on $W_{I_1}$ as illustrated in Figure~\ref{collapsing-fig}. This eliminates the pair of intersections between $W_i$ and $W_{I_1}$ that were paired by $W_{(I_1,i)}$, and creates two new pairs of intersections $\{p^+,p^-\}$ and $\{q^+,q^-\}$
in $W_{I_1}\cap W_{I_2}$. 
The $W_{(I_1,i)}$-Whitney move on $W_{I_1}$ also eliminates $W_{(I_1,i)}$ and all Whitney disks supported by $W_{(I_1,i)}$, including $W_J$. We will describe how to locally recover the desired trees:

The new intersection pairs  $\{p^+,p^-\}$ and $\{q^+,q^-\}$ admit Whitney disks $W^p_{(I_1,I_2)}$ and $W^q_{(I_1,I_2)}$, respectively, which are parallel copies of
the Whitney disk $W_{((I_1,i),I_2)}$ (Figure~\ref{collapsing-fig}).
Consider first the case that 
$W_{((I_1,i),I_2)}=W_J$. In this case each of the parallel copies $W^p_{(I_1,I_2)}$ and $W^q_{(I_1,I_2)}$ are also $\pm 1$-twisted, and there is a single intersection $r\in W^p_{(I_1,I_2)}\cap W^q_{(I_1,I_2)}$ with associated tree $t_r=\pm\langle (I_1,I_2),(I_1,I_2)\rangle$. So the effect on $t(\cW)$ is that $\pm J^\iinfty$ has been exchanged for $+I^\iinfty -I^\iinfty \pm\langle I,I \rangle$, where $I=(I_1,I_2)$ is the result of collapsing an $i$-labeled edge of $J=((I_1,i),I_2)$.

Now considering the case that 
$W_{((I_1,i),I_2)}$ supports $W_J$, the Whitney disks $W^p_{(I_1,I_2)}$ and $W^q_{(I_1,I_2)}$ are framed and disjoint since they are parallels of 
$W_{((I_1,i),I_2)}$. Each of $W^p_{(I_1,I_2)}$ and $W^q_{(I_1,I_2)}$ will inherit the intersections $W_{((I_1,i),I_2)}$ had with some $W_{I_3}$, and by taking parallels of the Whitney disks that were supported by $W_{((I_1,i),I_2)}$ all the way up to $W_J$ we see that $W^p_{(I_1,I_2)}$ and $W^q_{(I_1,I_2)}$ each support Whitney disks leading up to parallels $W_I^p$ and $W_I^q$ of $W_J$ whose trees $I$ are the result of collapsing an $i$-labeled vertex of $J$.
Since $W_I^p$ and $W_I^q$ are parallel copies of $W_J$, they are each $\pm 1$-twisted and have a single unpaired intersection $r=W_I^p\cap W_I^q$.
So the effect on $t(\cW)$ is that $\pm J^\iinfty$ has been exchanged for $+I^\iinfty -I^\iinfty \pm\langle I,I \rangle$, where $I$ is the result of collapsing an $i$-labeled edge of $J$.
\end{proof}

\begin{figure}[ht!]
        \centerline{\includegraphics[scale=.45]{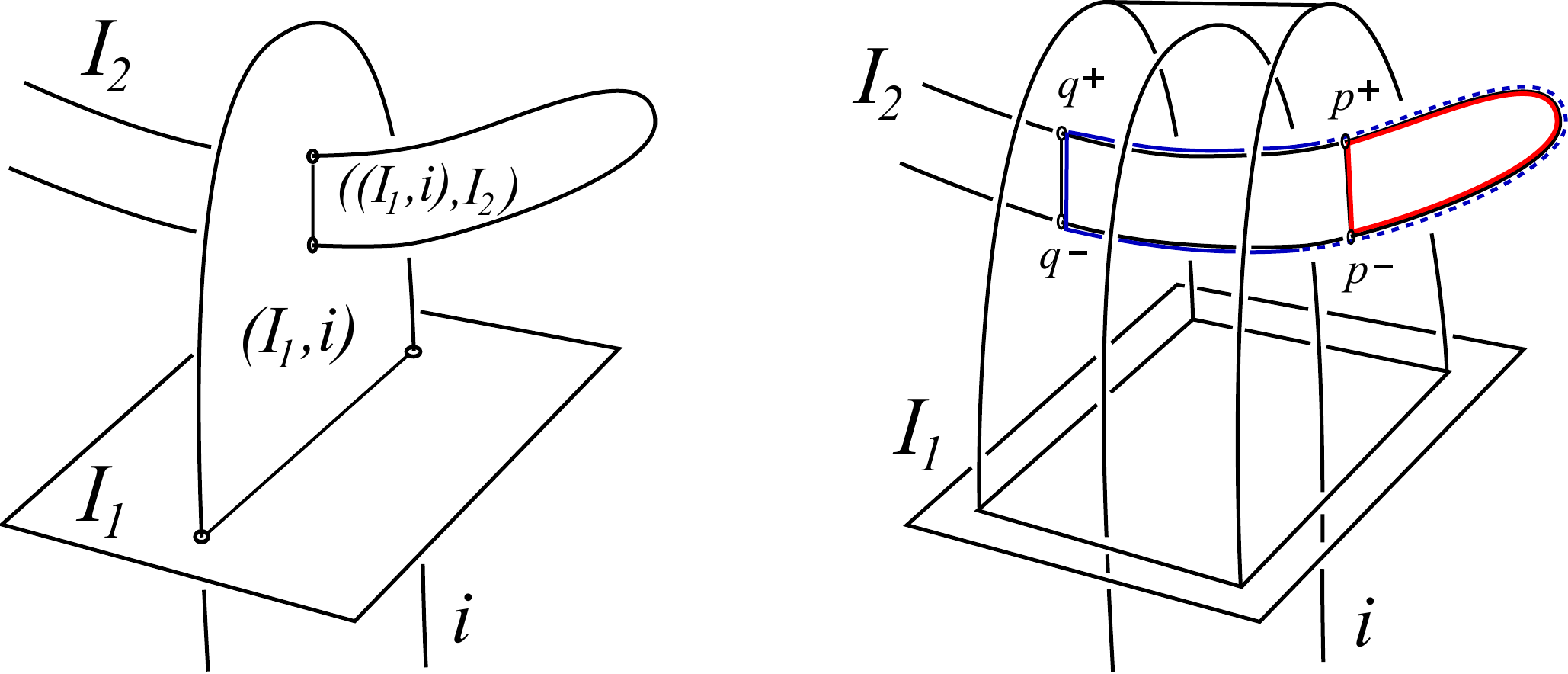}}
        \caption{Left: A $4$-ball neighborhood of the Whitney disk $W_{(I_1,i)}$ pairing $W_{I_1}\pitchfork W_i$, with $W_{(I_1,i)}\pitchfork W_{I_2}$ paired by $W_{((I_1,i),I_2)}$. Right: Doing the $W_{(I_1,i)}$ Whitney move on $W_i$ creates $\{p^+,p^-\}$ and $\{q^+,q^-\}$ in $W_{I_1}\cap W_{I_2}$.
The Whitney disk $W^p_{(I_1,I_2)}$ pairing $p^\pm$ is a trimmed parallel copy of $W_{((I_1,i),I_2)}$, with $\partial W^p_{(I_1,I_2)}$ shown in red.
The Whitney disk $W^q_{(I_1,I_2)}$ pairing $q^\pm$ is formed from a parallel copy of $W_{((I_1,i),I_2)}$, and $\partial W^q_{(I_1,I_2)}$ is indicated in blue.
The dashed blue subarc indicates that part of $W^q_{(I_1,I_2)}$ lies in a nearby `past or future' coordinate via the convention of considering $4$-space as $3$-space crossed with a time interval. In the case that $W_{((I_1,i),I_2)}$ equals the twisted Whitney disk $W_J$, then the figure is slightly inaccurate in that the pictured $W_{((I_1,i),I_2)}$ is framed.}
        \label{collapsing-fig}
\end{figure}        



\subsection{$k$-repeating twisted Whitney towers and twisted self $C_k$-concordance}\label{subsec:k-rep-w-towers-and-self-Ck-conc}

The following result covers the equivalence of the first two statements in Theorem~\ref{thm:twisted-self-Ck-equals-Milnor-Arf-equals-tower}:

\begin{prop}\label{prop:order-mk-1-W-tower-equals-self-Ck-concordance}
An $m$-component link $L\subset S^3$ bounds a $k$-repeating twisted Whitney tower of order $mk-1$ if and only if $L$ is twisted self $C_k$-concordant to the unlink.

\end{prop}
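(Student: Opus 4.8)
The plan is to read Proposition~\ref{prop:order-mk-1-W-tower-equals-self-Ck-concordance} off the tree-by-tree correspondence between Whitney towers and clasper concordances to the unlink furnished by Theorem~\ref{thm:clasper-concordance} --- that is, Theorems~\ref{thm:w-tower-forest-implies-clasper-forest} and~\ref{thm:clasper-forest-implies-w-tower-forest} --- together with Lemma~\ref{lem:reduce-to-mono-labeled}, which produces the mono-labeling required of self $C_k$-concordances. The connecting observation is an elementary dictionary for mono-labeled trees: a framed tree whose univalent vertices all carry a single label $j$ has $r_j$ equal to its number of univalent vertices, hence multiplicity $=\operatorname{order}+2=\operatorname{degree}+1$; and a twisted tree $J^\iinfty$ whose underlying rooted tree $J$ is mono-labeled has multiplicity $2\,r_j(J)=2(\operatorname{order}(J)+1)$. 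Consequently, among mono-labeled trees, the ``degree'' bounds defining twisted self $C_k$-concordance (framed degree $\geq k$, twisted degree $\geq k/2$) are equivalent to ``multiplicity $>k$'', which is exactly the escape clause in Definition~\ref{def:k-rep-twisted-w-tower}.

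For the ``if'' direction, let $L$ be twisted self $C_k$-concordant to the unlink, via clasper collections $\cC_1,\dots,\cC_r$ whose intersection forests consist only of mono-labeled trees of framed degree $\geq k$ or twisted degree $\geq k/2$. Theorem~\ref{thm:clasper-forest-implies-w-tower-forest} then yields a twisted Whitney tower $\cW$ on $L$ with $t(\cW)=\amalg_i t(\cC_i)$. By the dictionary every tree of $t(\cW)$ has multiplicity $>k$, so the order clauses of Definition~\ref{def:k-rep-twisted-w-tower} are irrelevant and $\cW$ is a $k$-repeating twisted Whitney tower of any order, in particular of order $mk-1$.

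For the ``only if'' direction, let $L$ bound a $k$-repeating twisted Whitney tower $\cW$ of order $mk-1$. I would first apply Lemma~\ref{lem:reduce-to-mono-labeled} to replace $\cW$ by one of the same order with all trees mono-labeled, keeping the name $\cW$. Since $m\geq 1$, every mono-labeled tree in $t(\cW)$ then has multiplicity $>k$: if it meets the order clause of Definition~\ref{def:k-rep-twisted-w-tower} --- order $\geq mk-1$ (framed) or $\geq (mk-1)/2$ (twisted) --- the dictionary gives multiplicity $\geq mk+1>k$, and otherwise this is immediate. Now Theorem~\ref{thm:w-tower-forest-implies-clasper-forest} writes $L$ as the image of the unlink under a concordance, surgery on a single collection $\cC$ with $t(\cC)=t(\cW)$, and a further concordance; the forest $t(\cC)$ is mono-labeled of multiplicity $>k$, hence of framed degree $\geq k$ and twisted degree $\geq k/2$ by the dictionary. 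This sequence is a twisted self $C_k$-concordance from $L$ to the unlink.

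Essentially all of the work is packaged into Theorem~\ref{thm:clasper-concordance} and Lemma~\ref{lem:reduce-to-mono-labeled}, so I anticipate no serious obstacle beyond bookkeeping. The one point demanding care is verifying the dictionary in the twisted case and confirming that order $mk-1$ (equivalently, length $mk$) is the sharp threshold: at that order a framed tree has $mk+1$ univalent vertices, so the pigeonhole principle forces some label to repeat at least $k+1$ times, which is precisely what lets the collapsing moves of Lemma~\ref{lem:collapse-univalent-edges} (used inside Lemma~\ref{lem:reduce-to-mono-labeled}) reduce it to a mono-labeled tree without dropping out of the relevant filtration --- below that order this can fail.
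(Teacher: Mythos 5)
Your proof is correct and follows essentially the same route as the paper: the ``if'' direction reads off Theorem~\ref{thm:clasper-forest-implies-w-tower-forest}, and the ``only if'' direction combines Lemma~\ref{lem:reduce-to-mono-labeled} with Theorem~\ref{thm:w-tower-forest-implies-clasper-forest}. The explicit multiplicity--degree dictionary you write out is what the paper compresses into the one-line remark that $t(\cW)$ consists of mono-labeled trees of multiplicity $k+1$, so the two arguments are the same modulo bookkeeping.
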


%

\begin{proof}
The ``if'' direction follows directly from Theorem~\ref{thm:clasper-forest-implies-w-tower-forest}: A twisted self $C_k$-concordance gives a twisted Whitney tower $\cW$ bounded by $L$ which is a $k$-repeating Whitney tower of order $n$ \emph{for all $n$}, since $t(\cW)$ consists of only mono-labeled trees of multiplicity $k+1$. 

For the ``only if'' direction, given a $k$-repeating twisted Whitney tower $\cW$ of order $mk-1$ bounded by $L$, Lemma~\ref{lem:reduce-to-mono-labeled} yields a $k$-repeating twisted Whitney tower $\cW'$ of order $mk-1$ bounded by $L$ such that $t(\cW')$ contains only mono-labeled trees of multiplicity $\geq k+1$. Now Theorem~\ref{thm:w-tower-forest-implies-clasper-forest}
implies that $L$ is self $C_k$-concordant to the unlink.
%
\end{proof}

Showing the equivalence of the second and third statements in Theorem~\ref{thm:twisted-self-Ck-equals-Milnor-Arf-equals-tower} will occupy the rest of this section.


\subsection{Quick review of first non-vanishing Milnor invariants.}\label{subsec:intro-Milnor-review} 
Let $L\subset S^3$ be an $m$-component link with fundamental group $G=\pi_1(S^3\setminus L)$.
If the
longitudes of $L$ lie in the $(n+1)$-th term of the
lower central series $G_{n+1}$, then a choice of meridians induces an isomorphism
$
\frac{G_{n+1}}{G_{n+2}}\cong\frac{F_{n+1}}{F_{n+2}}
$
where $F=F(m)$ is the free group on $\{x_1,x_2,\ldots,x_m\}$.

Let $\sL=\sL(m)$ denote the free Lie algebra (over $\Z$) on generators $\{X_1,X_2,\ldots,X_m\}$. It is $\N$-graded, $\sL=\oplus_n \sL_n$, where the degree~$n$ part $\sL_n$ is the
additive abelian group of length $n$ brackets, modulo Jacobi
identities and self-annihilation relations $[X,X]=0$.
The multiplicative abelian group $\frac{F_{n+1}}{F_{n+2}}$ of
length $n+1$ commutators is isomorphic to
$\sL_{n+1}$, with $x_i$ mapping to  $X_i$ and commutators mapping to Lie brackets.

In this setting, denote by $l_i$ the image of the $i$-th longitude in  $\sL_{n+1}$ under the above isomorphisms and
define the \emph{order $n$ Milnor invariant}
$\mu_n(L)$ by
$$
\mu_n(L):=\sum_{i=1}^m X_i \otimes l_i \in \sL_1 \otimes
\sL_{n+1}.
$$
This definition of $\mu_n(L)$ is the first non-vanishing ``total'' Milnor invariant of order $n$, and corresponds to {\em all} Milnor invariants of \emph{length} $n+2$ in the original formulation of \cite{M1,M2}. 
The original $\bar{\mu}$-invariants are computed from the longitudes via the Magnus expansion as integers modulo indeterminacies coming from invariants of shorter length. Since we will only be concerned with first non-vanishing $\mu$-invariants, we do not use the ``bar'' notation $\bar{\mu}$.

It turns out that  $\mu_n(L)$ lies in the kernel $\sD_n$
of the bracket map $\sL_1 \otimes \sL_{n+1}\rightarrow \sL_{n+2}$ 
(by ``cyclic symmetry'' \cite{FT2}).

\subsection{Intersection invariants for twisted Whitney towers}\label{subsec:W-tower-int-trees}

The intersection forest $t(\cW)$ (Definition~\ref{subsec:int-forests}) of an order $n$ twisted Whitney tower $\cW$ determines an intersection invariant $\tau_n^\iinfty(\cW)$ which represents the obstruction to the existence of an order $n+1$ twisted Whitney tower on the same order $0$ surfaces and takes values in a finitely generated abelian group $\cT_n^\iinfty$ as we briefly describe next. For details see \cite{CST1,ST2}.

Denote by $\cT=\cT(m)$ the quotient of the free abelian group on framed trees (as in section~\ref{sec:trees}) by the following local \emph{antisymmetry} (AS) and \emph{Jacobi} (IHX) relations: 
\begin{figure}[h]
\centerline{\includegraphics[scale=.65]{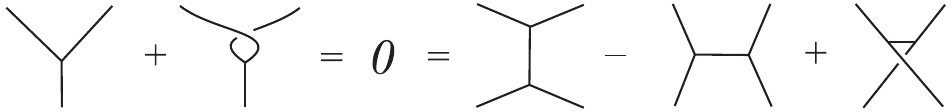}}
         \label{fig:ASandIHXtree-relations}
\end{figure}

Since the AS and IHX relations are homogeneous with respect to order, $\cT$ inherits a grading $\cT=\oplus_n\cT_n$, where $\cT_n=\cT_n(m)$ is the free abelian group on order $n$ trees, modulo AS and IHX relations.

As a consequence of the AS relations the signs of the framed trees in $t(\cW)$ only depend on the orientation of the order zero surfaces after mapping to $\cT_n$ \cite[Sec.2.5]{CST2}.

For a twisted Whitney tower $\cW$ of order $n$, the twisted intersection invariant $\tau_n^\iinfty(\cW)$ 
is defined by taking the signed sum of all order $n$ framed trees and order $\frac{n}{2}$ $\iinfty$-trees in the intersection forest $t(\cW)$ in a finitely generated abelian group $\cT_n^\iinfty$
which is defined as follows: 

\begin{defn}[\cite{CST1}]\label{def:T-infty}
In odd orders, the group $\cT^{\iinfty}_{2j-1}$ is the quotient of $\cT_{2j-1}$ by the \emph{boundary-twist relations}: 
\[
 i\,-\!\!\!\!\!-\!\!\!<^{\,J}_{\,J}\,\,=\,0
\] 
where $J$ ranges over all order $j-1$ subtrees. These relations correspond to the fact that
via the boundary-twisting operation \cite[Fig.18]{CST1} any order $2j-1$ tree $ i\,-\!\!\!\!\!-\!\!\!<^{\,J}_{\,J}\,\in t(\cW)$ can be replaced by the $\iinfty$-tree $ i\,-\!\!\!\!\!-\!\!\!<^{\,J}_{\,\iinfty}$ of order $j$ at the cost of only creating trees of order $\geq 2j$ in $t(\cW)$.

In even orders, the group $\cT^{\iinfty}_{2j}$ is the quotient of the free abelian group on trees of order $2j$ 
and $\iinfty$-trees of order $j$ by the following relations:
\begin{enumerate}
     \item AS and IHX relations on order $2j$ trees
   \item \emph{symmetry} relations: $(-J)^\iinfty = J^\iinfty$
  \item \emph{twisted IHX} relations: $I^\iinfty=H^\iinfty+X^\iinfty- \langle H,X\rangle $
   \item {\em interior-twist} relations: $2\cdot J^\iinfty=\langle J,J\rangle $
\end{enumerate}
The AS and IHX relations are as pictured above, but they only apply to framed trees (not to $\iinfty$-trees). 
The \emph{symmetry relation} corresponds to the fact that the relative 
Euler number of a Whitney disk is independent of its orientation, with the minus sign denoting that the cyclic edge-orderings at the trivalent vertices of $-J$ differ from those of $J$ at an odd number of vertices.
\end{defn}
As explained in \cite{CST1}, the \emph{twisted IHX relation} corresponds to the effect of performing a Whitney move in the presence of a twisted Whitney disk, and the \emph{interior-twist relation} corresponds to the fact that creating a 
$\pm1$ self-intersection
in a Whitney disk changes its twisting by $\mp 2$.
The IHX relations on framed trees can also be realized geometrically, and we have the following obstruction theory for order $n$ twisted Whitney towers:
\begin{thm}[Thm.1.9 of \cite{CST1}]\label{thm:twisted-order-raising}
A link $L$ bounds an order $n$ twisted Whitney tower $\cW$ with $\tau_n^\iinfty(\cW)=0\in\cT_n^\iinfty$ if and only if $L$ bounds an order $n+1$ twisted Whitney tower.$\hfill\square$
\end{thm}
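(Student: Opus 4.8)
The plan is as follows. The ``if'' direction is immediate: if $L$ bounds an order $n+1$ twisted Whitney tower $\cW'$, then $t(\cW')$ contains no framed trees of order $n$ and no $\iinfty$-trees of order $n/2$, so $\cW'$ is in particular an order $n$ twisted Whitney tower with $\tau_n^\iinfty(\cW')=0$. For the ``only if'' direction, suppose $L$ bounds an order $n$ twisted Whitney tower $\cW$ with $\tau_n^\iinfty(\cW)=0\in\cT_n^\iinfty$. First I would apply Lemma~\ref{lem:split-w-tower} to arrange that $\cW$ is split, so that the order-$n$ part of $t(\cW)$ is exactly a sum of signed order-$n$ framed trees from unpaired intersections, together with $\iinfty$-trees of order $n/2$ with coefficients $\pm 1$ from clean twisted Whitney disks of twisting $\pm 1$. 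The aim is to modify $\cW$ through order-$n$ twisted Whitney towers bounded by $L$ until this order-$n$ part is empty, at which point $\cW$ has become an order $n+1$ twisted Whitney tower.

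The core technical input is a geometric realization of the defining relations of $\cT_n^\iinfty$. I would establish: for any order-$n$ twisted Whitney tower $\cV$ bounded by $L$ and any defining relation $\rho$ of $\cT_n^\iinfty$ --- an IHX relation among order-$n$ framed trees, or (in the appropriate parity) a symmetry, twisted-IHX, interior-twist, or boundary-twist relation, the AS relations being realized automatically by reorienting order-$0$ sheets --- there is an order-$n$ twisted Whitney tower $\cV'$ bounded by $L$ whose intersection forest differs from $t(\cV)$ only in that the signed trees appearing in $\rho$ have been added to its order-$n$ part, possibly along with new framed trees of order $>n$ and new $\iinfty$-trees of order $>n/2$. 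Each such modification is one of the familiar controlled moves on Whitney towers --- introducing canceling pairs via finger moves, the geometric IHX (Jacobi) move of \cite{S1,CST2}, boundary and interior twists \cite{CST1}, and the Whitney move across a twisted Whitney disk --- performed in a small ball on an unlink and grafted into $\cV$ by tubing, with univalent ends attached to the correct order-$0$ sheets (possible since the trees of a given relator all carry the same univalent labels). Granting this, $\tau_n^\iinfty(\cW)=0$ means the order-$n$ part of $t(\cW)$ is a finite integral combination of relators in the free abelian group on order-$n$ framed trees and order-$n/2$ $\iinfty$-trees, so applying the realization lemma finitely many times (once, with the appropriate sign, for each relator in this combination) produces $\cW'$ whose order-$n$ part vanishes in that free group. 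Hence the order-$n$ unpaired intersections of $\cW'$ occur in algebraically canceling pairs and its order-$n/2$ twisted Whitney disks occur in oppositely twisted pairs, and each pair can be removed by one more local modification: a canceling pair of order-$n$ intersections (after arranging the two configurations to be parallel copies) is paired by a Whitney disk of order $n+1$, so any intersections or twisting it acquires contribute only framed trees of order $\geq n+1$ and $\iinfty$-trees of order $\geq n+1$; and an oppositely twisted pair of order-$n/2$ twisted Whitney disks is tubed together into a clean framed Whitney disk, eliminated by its own Whitney move (equivalently, traded via the interior-twist move for a canceling pair of order-$n$ framed trees handled as above). The result has no framed trees of order $n$ and no $\iinfty$-trees of order $n/2$, i.e.\ it is an order $n+1$ twisted Whitney tower bounded by $L$.

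The hard part will be the geometric realization of the IHX and twisted-IHX relations: producing, inside a ball, a local Whitney tower on an unlink whose intersection forest is precisely the chosen relator together with strictly higher-order trees, and grafting it onto $\cW$ without creating any new order-$n$ intersections or twistings --- and in the twisted case also tracking how boundary twists, interior twists and symmetry let order-$n/2$ twisted Whitney disks be exchanged with order-$n$ framed trees, so that the endgame bookkeeping stays consistent. These controlled moves and their realization statements are precisely what is developed in \cite{S1,ST2,CST1,CST2}, on which the cited Theorem~1.9 of \cite{CST1} ultimately rests.
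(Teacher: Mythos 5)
Your outline matches the three-step proof of Theorem~1.9 of \cite{CST1} that the paper itself only sketches during the proof of its $k$-repeating analogue Theorem~\ref{thm:k-rep-twisted-order-raising}: geometrically realize the defining relations of $\cT_n^\iinfty$ to reduce the order-$n$ part of $t(\cW)$ to algebraically canceling pairs, pass to \emph{simple} (right- or left-normed) trees via controlled IHX moves, then cancel geometrically by introducing a layer of order-$(n+1)$ Whitney disks and merging oppositely twisted order-$n/2$ Whitney disks into framed ones. The one step you compress is the second one --- the IHX normalization is what licenses your parenthetical ``after arranging the two configurations to be parallel copies'' --- but since you explicitly identify the controlled-move machinery of \cite{S1,ST2,CST1,CST2} as the hard part, this is essentially the same proof with an acceptable compression.
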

An outline of the proof of this theorem will be given during the proof-sketch of its $k$-repeating analogue 
Theorem~\ref{thm:k-rep-twisted-order-raising} in section~\ref{subsec:twisted-invariant-def}.

\subsection{The summation maps $\eta_n$}\label{subsec:eta-map}
The connection between $\tau^\iinfty_n(\cW)$ and $\mu_n(L)$ is via a homomorphism $\eta_n : \cT^\iinfty_n \to \sD_n$, which is most easily described by regarding rooted trees of order $n$ as elements of $\sL_{n+1}$ in the usual way:
For $v$ a univalent vertex of an order $n$ framed tree $t$, denote by $B_v(t)\in\sL_{n+1}$ the Lie bracket of generators $X_1,X_2,\ldots,X_m$ determined by the formal bracketing of indices
which is gotten by considering $v$ to be a root of $t$.

\begin{defn}\label{def:eta}
Denoting the label of a univalent vertex $v$ by $\ell(v)\in\{1,2,\ldots,m\}$, the
map $\eta_n:\cT^\iinfty_n\rightarrow \sL_1 \otimes \sL_{n+1}$
is defined on generators by
$$
\eta_n(t):=\sum_{v\in t} X_{\ell(v)}\otimes B_v(t)
\quad \, \,
\mbox{and}
\quad \, \,
\eta_n(J^\iinfty):= \frac{1}{2}\,\eta_n(\langle J,J \rangle).
$$
The first sum is over all univalent vertices $v$ of $t$, and the second expression lies in $\sL_1 \otimes \sL_{n+1}$ 
because the coefficients of $\eta_n(\langle J,J \rangle)$ are even. Here $J$ is a rooted tree of order $j$ for $n=2j$.
\end{defn}

The image of $\eta_n$ is contained in the bracket kernel $\sD_n<\sL_1 \otimes \sL_{n+1}$ 
(see e.g.~\cite[Lem.32]{CST2}
).

\begin{thm}[\cite{CST1}]\label{thm:Milnor invariant}
If $L$ bounds a twisted Whitney tower $\cW$ of order $n$, then the order $q$ Milnor invariants $\mu_q(L)$ vanish for $q<n$, and
\[
\mu_n(L) = \eta_n \circ\tau^\iinfty_n(\cW) \in \sD_n.
\]
\end{thm}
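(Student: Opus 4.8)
\textbf{Proof proposal for Theorem~\ref{thm:Milnor invariant}.}

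The plan is to reduce the computation of $\mu_n(L)$ to a local model at each unpaired intersection and each twisted Whitney disk in a split Whitney tower $\cW$ bounded by $L$, and then to match this local model with the definition of $\eta_n$ on the corresponding tree. First I would invoke Lemma~\ref{lem:split-w-tower} to assume $\cW$ is split, so that $t(\cW)$ consists of framed trees of order exactly $n$ and $\iinfty$-trees of order exactly $n/2$, each embedded in $\cW$, with each Whitney disk embedded and each non-trivially twisted Whitney disk clean with twisting $\pm 1$. The vanishing statement $\mu_q(L)=0$ for $q<n$ is then a consequence of the fact that the longitudes of $L$ lie in $G_{n+1}$: this is the standard argument (as in \cite{CST1,CST2}) that bounding an order $n$ (twisted) Whitney tower forces the meridional pushoffs of the longitudes to be products of $(n+1)$-fold commutators, using that the order $0$ disks together with the Whitney disks exhibit each longitude as lying deep in the lower central series. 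So the real content is the displayed formula for $\mu_n(L)$ itself.

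For the formula, the key step is to compute each longitude $l_i\in \sL_{n+1}$ in terms of the trees in $t(\cW)$. Here I would use the same strategy as in \cite{CST1}: a longitude $\lambda_i$ of $L$ bounds the order $0$ disk $W_i$ pushed off itself, and one reads the commutator it represents in $F_{n+1}/F_{n+2}$ by tracking how $\partial W_i$ interacts with the Whitney disks and with the other sheets it meets. By the splitness of $\cW$ and the Bing-doubling picture of Lemmas~\ref{lem:framed-tree-bing-link} and~\ref{lem:twisted-tree-bing-link} (the local links $L^N$ are iterated Bing-doubles realizing the trees), each unpaired intersection $p$ with tree $t_p$ contributes, for each univalent vertex $v$ of $t_p$ labelled $i$, precisely the bracket $B_v(t_p)$ to $l_i$, with sign $\epsilon_p$; and each clean twisted Whitney disk $W_J$ of order $n/2$ contributes via the relation $\omega(W_J)\cdot J^\iinfty \mapsto \tfrac12\,\langle J,J\rangle$-type term, i.e.\ the contribution of a twisting is the ``half'' of the contribution of the self-intersection it could be traded for by an interior twist. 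Summing over all $i$ then gives
\[
\mu_n(L)=\sum_{i=1}^m X_i\otimes l_i = \sum_{i} X_i \otimes \Bigl(\sum_{p}\epsilon_p \sum_{v:\ell(v)=i} B_v(t_p) + \sum_{J}\omega(W_J)\cdot(\text{twisted contribution})\Bigr),
\]
which is exactly $\eta_n\circ\tau_n^\iinfty(\cW)$ by Definition~\ref{def:eta} and the definition of $\tau_n^\iinfty$. One must check this is well defined, i.e.\ independent of the choices made in $\cW$: this follows because the target group $\cT_n^\iinfty$ is built precisely so that the relations (AS, IHX, boundary-twist, symmetry, twisted IHX, interior-twist) are exactly the moves relating different split Whitney towers with the same boundary, and $\eta_n$ is shown to be well defined on $\cT_n^\iinfty$ in \cite{CST1} (and lands in $\sD_n$, as noted after Definition~\ref{def:eta}).

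The main obstacle I expect is the bookkeeping in the local computation of the longitudes: showing that a single unpaired intersection $p$ with tree $t_p$ really contributes the sum $\sum_{v}X_{\ell(v)}\otimes B_v(t_p)$ over \emph{all} univalent vertices (not just one), and getting the signs to come out consistently with the AS orientation conventions. This is where the Bing-double model is essential --- in the local $4$-ball $N$ of Lemma~\ref{lem:framed-tree-bing-link}, the link $L^N$ is an iterated Bing-double along $t_p$ whose Milnor invariant is computed by Cochran's realization result \cite{C} to be exactly $\eta_n(\epsilon_p\cdot t_p)$ --- together with a ``locality'' argument showing that the global $\mu_n(L)$ is the sum of the local contributions coming from the disjoint $N_r$'s (using that $L$ cobounds, with the split union of the $L^{N_r}$, a planar surface in the complement, so the longitudes add in $\sL_{n+1}$). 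The twisted case requires the additional input, spelled out in \cite{CST1,CST2}, that a clean $\pm1$-twisted Whitney disk $W_J$ behaves like ``half'' a self-intersection of type $\langle J,J\rangle$, which is precisely the content of the interior-twist relation and is what forces the factor $\tfrac12$ in the definition of $\eta_n$ on $\iinfty$-trees; dealing carefully with the parity (why $\eta_n(\langle J,J\rangle)$ has even coefficients) is a small but necessary point. Since all of these ingredients are established in \cite{CST1}, the proof is really an assembly of known facts, and I would present it as such, citing \cite{CST1} for the detailed verifications and emphasizing only the reduction to the split, Bing-doubled local models.
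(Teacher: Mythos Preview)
The paper does not actually prove this theorem; it is quoted from \cite{CST1}. The closest thing to a ``paper's own proof'' is the argument given here for the $k$-repeating analogue, Theorem~\ref{thm:k-repeating-mu}, together with Lemma~\ref{lem:k-rep-w-tower-complement}. That argument proceeds quite differently from your proposal: rather than localizing to Bing-double models, it analyzes the fundamental group $\cG=\pi_1(B^4\setminus\cW)$ of the Whitney tower complement directly. Whitney tower--grope duality (as in \cite[Prop.~25]{ST3} and \cite[Lem.~3.11]{Cha2}) identifies generators of $H_2(B^4\setminus\cW)$ as Clifford tori and oval-bundles, whose attaching maps are iterated commutators read off from the edges of the trees in $t(\cW)$; this yields a presentation of $\cG/\cG_{n+2}$ and shows that the link longitudes, pushed into $B^4\setminus\cW$, are products of commutators corresponding exactly to $\eta_n$ applied to the trees (see \cite[Lem.~3.8,~3.9]{Cha2}). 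No splitting into local $4$-balls, no appeal to Cochran's realization for Bing doubles, and no additivity-under-band-sum step is needed.

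Your approach via the local models of Lemmas~\ref{lem:framed-tree-bing-link} and~\ref{lem:twisted-tree-bing-link} plus Cochran's computation is a legitimate alternative, and your identification of the ``locality'' step as the main obstacle is accurate. The concern is that the planar cobordism from $L$ to the split union of the $L^{N_r}$ is not a concordance, so you cannot simply say ``the longitudes add in $\sL_{n+1}$''; you must argue that the first non-vanishing total Milnor invariant is additive under the internal band sums that arise. This is true, but establishing it cleanly essentially amounts to knowing that $\mu_n$ descends to the abelian group $\W_n^\iinfty$---which in \cite{CST1} is itself proved using the complement analysis you are trying to avoid. So your route works but is not more elementary; the complement approach computes the longitudes once and for all without a separate additivity argument, which is why the paper (and \cite{CST1,CST2,Cha2}) takes that path.
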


\subsection{Classification of the twisted Whitney tower filtration}\label{subsec:twisted-filtration-classification}
%
%
%
%

Following \cite{CST1}, the set of $m$-component framed links in $S^3$ which bound order $n$ twisted Whitney towers in $B^4$ is denoted by 
$\bW^\iinfty_n$; and the quotient of $\bW^\iinfty_n$ by the equivalence relation of order $n+1$ \emph{twisted Whitney tower concordance} is 
denoted by $\W^\iinfty_n$. Here a twisted Whitney tower concordance between two links is a twisted Whitney tower on a singular concordance of immersed annuli in $S^3\times I$ between the links. 
As a consequence of the twisted Whitney tower obstruction theory 
(Theorem~\ref{thm:twisted-order-raising}), the component-wise band-sum operation makes $\W^\iinfty_n$ a finitely generated abelian group. (See \cite{CST1} for details.)

The quotient $\W^\iinfty_n$  
is the {\em associated graded} of the filtration $\bW^\iinfty_n$ in the sense that $L\in \bW^\iinfty_{n+1}$ if and only if $L\in\bW^\iinfty_n$ and $[L]=0\in\W^\iinfty_n$, with $0$ corresponding to the unlink.

Using Cochran's Bing-doubling techniques (as in section~\ref{subsec:Bing-along-trees}), we constructed in \cite[sec.3.2]{CST1} twisted \emph{realization} epimorphisms
$$
R^\iinfty_n : \cT^\iinfty_n \sra\W^\iinfty_n
$$
which send $g\in\cT^{\iinfty}_n$ to the equivalence class of links bounding an order $n$ twisted Whitney tower $\cW$ with $\tau^{\iinfty}_n(\cW)=g$.

From Theorem~\ref{thm:Milnor invariant} there is a commutative triangle diagram of epimorphisms:

\begin{equation}\label{eqn:triangle}
    \xymatrix{
\cT^\iinfty_n \ar@{->>}[r]^{R_n^\iinfty} \ar@{->>}[rd]_{\eta_n} & \W^\iinfty_n \ar@{->>}[d]^{\mu_n}\\
& \sD_{n}
}
\tag{$\bigtriangledown$}
\end{equation}

The following partial classification of $\W^\iinfty_n$ is a consequence of our proof \cite{CST3} of 
a combinatorial conjecture of J. Levine \cite{L2}:
\begin{thm}[\cite{CST1}] \label{thm:twisted-three-quarters-classification} The maps $\eta_n:\cT^\iinfty_n \to \sD_n$ are isomorphisms for $n\equiv 0,1,3\,\mod 4$.
As a consequence, both the Milnor invariants $\mu_n\colon \W^\iinfty_n\to \sD_n$  and the twisted realization maps $R^\iinfty_n : \cT^\iinfty_n \to\W^\iinfty_n$ are isomorphisms for these orders.
\end{thm}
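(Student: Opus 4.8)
The plan is to reduce the whole statement to the injectivity of $\eta_n$ and then extract that injectivity from our resolution \cite{CST3} of Levine's conjecture \cite{L2}. All three maps $\eta_n$, $R_n^\iinfty$, $\mu_n$ in $(\bigtriangledown)$ are already epimorphisms: $R_n^\iinfty$ by the Bing-doubling realization construction of \cite[sec.3.2]{CST1}, $\eta_n$ by realizability of $\sD_n$ by twisted Whitney towers (see \cite{CST1,CST3}), and $\mu_n$ because $\eta_n=\mu_n\circ R_n^\iinfty$ is. Granting this, it suffices to prove that $\eta_n\colon\cT_n^\iinfty\to\sD_n$ is \emph{injective}: then $R_n^\iinfty$ is injective, being the first factor of the injection $\eta_n=\mu_n\circ R_n^\iinfty$, hence an isomorphism; consequently $\mu_n=\eta_n\circ(R_n^\iinfty)^{-1}$ is an isomorphism, and $\eta_n$, now both injective and surjective, is one too.

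\textbf{Identifying $\cT_n^\iinfty$ and relating $\eta_n$ to the framed Levine map.} The group $\cT_n^\iinfty$ is built from the quasi-Lie tree group $\cT_n$ --- which can carry $2$-torsion, coming from trees that equal their own reflection across a trivalent vertex (i.e.\ contain a repeated inner branch) --- by imposing the $\iinfty$-relations of Definition~\ref{def:T-infty}. For $n$ odd, the boundary-twist relations (which set every order-$(2j-1)$ tree of the form $\langle(J,J),i\rangle$ to zero) are designed precisely to kill this $2$-torsion, so $\cT_n^\iinfty$ becomes a quotient of the ``honest Lie'' tree group. For $n=2j$ even, the interior-twist relation $2\cdot J^\iinfty=\langle J,J\rangle$ exhibits the order-$j$ $\iinfty$-trees as a ``square root'' of the same torsion, glued onto the order-$2j$ framed trees through the symmetry and twisted-IHX relations. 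In both cases $\eta_n$ is compatible, via Definition~\ref{def:eta}, with the framed Levine map $\cT_n\to\sD_n$; and since $\sD_n\subset\sL_1\otimes\sL_{n+1}$ is torsion-free, $\eta_n$ automatically annihilates all of the $2$-torsion of $\cT_n$. Injectivity of $\eta_n$ is thereby equivalent to the assertion that the $\iinfty$-relations capture \emph{exactly} that torsion --- neither less nor more.

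\textbf{Invoking Levine's conjecture; the main obstacle.} The substantive input is \cite{CST3}: the framed map $\cT_n\to\sD_n$ is an isomorphism after inverting $2$, and its integral kernel and cokernel are explicitly computed finite $2$-torsion groups which vanish except when $n\equiv 2\bmod 4$. Feeding this through the description of $\cT_n^\iinfty$ above shows that for $n\equiv 0,1,3\bmod 4$ the $\iinfty$-relations repair the $2$-primary discrepancy exactly, so $\eta_n$ is an isomorphism, and the theorem follows by the Reduction step. The hard part is entirely inside \cite{CST3} --- a delicate combinatorial and homological analysis in the free (quasi-)Lie algebra pinning down this $2$-torsion; what remains here is the bookkeeping that the three families of relations in Definition~\ref{def:T-infty} neither over-kill nor under-kill, together with the observation that when $n\equiv 2\bmod 4$ the repair genuinely fails, leaving a residual $\z$ --- exactly the domain of the higher-order Arf invariant --- which is why that congruence class must be excluded.
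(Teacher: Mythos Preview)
Your outline is correct in spirit and matches what the paper indicates: this theorem is quoted from \cite{CST1} without proof here, with the remark that it is ``a consequence of our proof \cite{CST3} of a combinatorial conjecture of J.~Levine \cite{L2}.'' Your reduction via the triangle $(\bigtriangledown)$ to the injectivity of $\eta_n$, followed by invoking the resolution of Levine's conjecture in \cite{CST3}, is exactly the intended route.

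Two small corrections are in order. First, your justification of the surjectivity of $\eta_n$ (``by realizability of $\sD_n$ by twisted Whitney towers'') is misdirected: $\eta_n$ is a purely combinatorial map from a tree group to a Lie-algebraic kernel, and its surjectivity is an algebraic fact (established in \cite{CST1,CST2,CST3}), not a statement about Whitney towers. One clean way to see it is that already the framed summation map $\cT_n\to\sD_n$ is surjective (this is part of the Levine picture), and $\cT_n$ surjects onto $\cT_n^\iinfty$ for $n$ odd while for $n$ even the $\iinfty$-generators map into $\sD_n$ by definition. Second, your closing sentence understates the residual at $n\equiv 2\bmod 4$: the kernel of $\eta_{4j-2}$ is not a single copy of $\Z_2$ but rather $\Z_2\otimes\sL_j$ (Proposition~\ref{prop:kerEta4j-2}), generated by all symmetric $\iinfty$-trees $\iinfty-\!\!\!\!-\!\!-\!\!\!<^{\,J}_{\,J}$ with $J$ ranging over order $j-1$ rooted trees. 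With these adjustments your sketch is accurate.
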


Since $\sD_n$ is a free abelian group of known rank for all $n$ \cite{O}, this gives a complete computation of $\W^\iinfty_n$ in three quarters of the cases.

The following result computes the kernel of $\eta_n$ for all $n\equiv 2\mod 4$:
\begin{prop}[\cite{CST1}]\label{prop:kerEta4j-2}
The map sending $1\otimes J $ to $ \iinfty-\!\!\!\!-\!\!-\!\!\!<^{\,J}_{\,J}\,\,\in\cT^\iinfty_{4j-2}$ 
for rooted trees $J$ of order $j-1$ defines an isomorphism $\Z_2 \otimes \sL_j\cong\Ker(\eta_{4j-2}:\cT_{4j-2}^\iinfty\to\sD_{4j-2})$.
\end{prop}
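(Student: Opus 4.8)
The plan is to check four things about the stated homomorphism $\phi\colon \Z_2\otimes\sL_j\to\cT^\iinfty_{4j-2}$, $1\otimes J\mapsto (J,J)^\iinfty=\,\iinfty-\!\!\!\!-\!\!-\!\!\!<^{\,J}_{\,J}$: that $\phi$ is well defined, that $\im(\phi)\subseteq\Ker(\eta_{4j-2})$, that $\phi$ is injective, and that $\phi$ is onto $\Ker(\eta_{4j-2})$. The first two are relation-chasing in $\cT^\iinfty_{4j-2}$ using the defining relations of Definition~\ref{def:T-infty} together with the combinatorics of the free Lie algebra; injectivity I would package with the last point, which is the substantive step and is where the explicit structure of $\sD_{4j-2}$ and our proof~\cite{CST3} of Levine's conjecture enter.

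For well-definedness one must check that $\phi$ is $2$-torsion-valued and that the antisymmetry and Jacobi relations of $\sL_j$ are respected. The first is a relation-chase using the interior-twist relation $2\cdot(J,J)^\iinfty=\langle(J,J),(J,J)\rangle$ and the fact that $(J,J)=[B_J,B_J]$ is a self-bracket; the second is the ``quadratic refinement'' bookkeeping — antisymmetry is absorbed by the symmetry relation $(-J)^\iinfty=J^\iinfty$ together with $(-J,-J)=(J,J)$, and a Jacobi relation among the $J$'s passes, via the twisted-IHX relation applied to $(J,J)^\iinfty$ and the mod-$2$ collapse of the resulting cross-terms, to the corresponding relation modulo $2$ among the $(J,J)^\iinfty$. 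Both are of the same nature as the well-definedness check for the $\eta_n$-formula in \cite{CST1,CST2} and I would adapt those arguments. For the image in the kernel: by Definition~\ref{def:eta}, $\eta_{4j-2}((J,J)^\iinfty)=\tfrac12\,\eta_{4j-2}(\langle(J,J),(J,J)\rangle)$, and every term of the bracket-formula for $\eta_{4j-2}$ on $\langle(J,J),(J,J)\rangle$ contains $B_{(J,J)}=[B_J,B_J]=0$ as a bracket factor (by multilinearity of the bracket), so the whole expression vanishes in $\sL_1\otimes\sL_{2j}$; hence $\im(\phi)\subseteq\Ker(\eta_{4j-2})$.

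For injectivity and surjectivity onto $\Ker(\eta_{4j-2})$ I would put elements of $\cT^\iinfty_{4j-2}$ in a normal form: using the symmetry, twisted-IHX and interior-twist relations, every class is a sum $x+\sum_\alpha c_\alpha K_\alpha^\iinfty$ with $x$ a combination of order-$(4j-2)$ framed trees and the $K_\alpha$ a fixed system of basic-commutator-shaped rooted trees of order $2j-1$, $c_\alpha\in\{0,1\}$, where those $K_\alpha$ not of the form $(J,J)$ represent, under $\eta_{4j-2}$, ``half'' of a distinguished primitive element of $\sD_{4j-2}$. Applying $\eta_{4j-2}$ and invoking (i) that its restriction to framed trees is the non-twisted realization map controlled by Theorem~\ref{thm:twisted-three-quarters-classification} and the resolution of Levine's conjecture, together with the explicit $\Z$-basis of the free abelian group $\sD_{4j-2}$ of \cite{O}, and (ii) that the $\eta_{4j-2}(K_\alpha^\iinfty)$ for non-square $K_\alpha$ are linearly independent over $\Z$ both of each other and of the framed-tree images, one reads off that a class in $\Ker(\eta_{4j-2})$ must have $x=0$ and all non-square coefficients zero, leaving only a $\Z_2$-combination of the $(J,J)^\iinfty$, i.e. an element of $\im(\phi)$; the same independence statement gives injectivity of $\phi$, so $\phi$ is the claimed isomorphism.

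The genuinely hard step is (ii): disentangling, after applying $\eta_{4j-2}$, the contribution of the ``square'' $\iinfty$-trees $(J,J)^\iinfty$ from the framed trees and the remaining $\iinfty$-trees, i.e. proving that $\eta_{4j-2}$ kills nothing outside $\im(\phi)$. This does not follow formally from Theorem~\ref{thm:twisted-three-quarters-classification}, which asserts $\eta_n$ is an isomorphism precisely in the residues where it has no kernel; in the residue $2\bmod 4$ the map genuinely has a kernel, and pinning it down requires matching the $\eta_{4j-2}$-images of the generators of $\cT^\iinfty_{4j-2}$ against the known rank and basis of $\sD_{4j-2}$ — exactly the fine combinatorial input behind our proof of Levine's conjecture.
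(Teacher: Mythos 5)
The paper does not prove this Proposition; it is imported from \cite{CST1}, where $\Ker(\eta_{4j-2})$ is pinned down by a diagram chase through the quasi-Lie bracket kernels $\sD'_n$ (the bracket kernels computed in the free \emph{quasi}-Lie algebra $\sL'$, which has antisymmetry but no self-annihilation), using the isomorphism $\eta'_n\colon\cT_n\to\sD'_n$ established in \cite{CST3} (Levine's conjecture) together with the short exact sequence $0\to\Z_2\otimes\sL_{2j}\to\sL'_{4j}\to\sL_{4j}\to 0$ induced by the quasi-Lie squaring map $J\mapsto[J,J]$. Your four-part plan has the right shape, and your argument that $\im(\phi)\subseteq\Ker(\eta_{4j-2})$ is correct: every rooted bracket of $\langle(J,J),(J,J)\rangle$ contains the factor $[B_J,B_J]=0$.

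There are, however, two gaps. For well-definedness, the interior-twist relation reduces $2\,(J,J)^\iinfty=0$ to showing $\langle(J,J),(J,J)\rangle=0$ in $\cT^\iinfty_{4j-2}$, but ``$(J,J)=[B_J,B_J]$ is a self-bracket'' is not a reason: $\cT_{4j-2}$ has no self-annihilation relation. The vanishing does hold, by an AS--IHX computation (run IHX at the central edge of $\langle(J,J),(J,J)\rangle$, whose four branches are all copies of $J$; combined with the AS two-torsion this kills the class), but that needs to be said. The substantive gap is the injectivity/exactness step. Theorem~\ref{thm:twisted-three-quarters-classification} is silent in the residue $n\equiv 2\bmod 4$, so citing it there does not help. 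The injectivity of $\eta_{4j-2}$ on the framed subgroup $\cT_{4j-2}$ is true, but it comes from the quasi-Lie isomorphism $\eta'_{4j-2}\colon\cT_{4j-2}\cong\sD'_{4j-2}\subseteq\sD_{4j-2}$, which your sketch never invokes in the form needed; and your independence claim (ii) for the non-square $\eta_{4j-2}(K_\alpha^\iinfty)$ is essentially a restatement of the Proposition rather than a step toward proving it. Note also that \cite{O} supplies the rank of $\sD_{4j-2}$, not a $\Z$-basis adapted to your proposed normal form. The normal-form-and-basis route might be made to work, but as written it would need to rediscover the comparison between $\sD_n$ and $\sD'_n$ that the argument in \cite{CST1} carries out directly via the snake lemma.
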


It follows from the above commutative triangle diagram \eqref{eqn:triangle} that $\Z_2 \otimes \sL_j$ is also an upper bound on
the kernels of the epimorphisms $R^\iinfty_{4j-2}$ and $\mu_{4j-2}$, and the calculation of $\W^\iinfty_{4j-2}$ is completed by \emph{higher-order Arf invariants} defined on the kernel of 
$\mu_{4j-2}$, as we describe next.


\subsection{Higher-order Arf invariants}\label{subsec:intro-higher-order-arf}

Let $\sK^\iinfty_{4j-2}$ denote the kernel of $\mu_{4j-2}: \W^\iinfty_{4j-2} \sra \sD_{4j-2}$. It follows from the triangle diagram \eqref{eqn:triangle} and Proposition~\ref{prop:kerEta4j-2} above that mapping $1\otimes J$ to 
$R^\iinfty_{4j-2}( \iinfty-\!\!\!\!\!-\!\!\!<^{\,J}_{\,J}\,\,)$ induces a surjection $\alpha^\iinfty_j: \Z_2 \otimes \sL_j
\sra \sK^\iinfty_{4j-2}$, for all $j\geq 1$.
Denote by $\overline{\alpha^\iinfty_{j}}$ the induced isomorphism on $(\mathbb Z_2\otimes {\sf L}_{j})/\Ker \alpha^\iinfty_{j}$.

\begin{defn}\label{def:Arf-j}  
The \emph{higher-order Arf invariants} are defined by
$$
\Arf_{j}:=(\overline{\alpha^\iinfty_{j}})^{-1}:\sK^\iinfty_{4j-2}\to(\mathbb Z_2\otimes {\sf L}_{j})/\Ker \alpha^\iinfty_{j}.
$$
\end{defn}

So from the triangle diagram \eqref{eqn:triangle}, Theorem~\ref{thm:twisted-three-quarters-classification}, Proposition~\ref{prop:kerEta4j-2} and Definition~\ref{def:Arf-j} we see that the groups $\W^\iinfty_n$ are computed by the Milnor and higher-order Arf invariants:
\begin{cor}[\cite{CST1}]\label{cor:intro-mu-arf-classify-twisted} 
The groups ${\sf W}^\iinfty_{n}$ are classified by Milnor invariants $\mu_n$ and, in addition, higher-order Arf invariants $\Arf_j$ for $n=4j-2$.

In particular, a link bounds an order $n+1$ twisted Whitney tower if and only if its Milnor invariants and higher-order Arf invariants vanish up to order $n$.

\end{cor}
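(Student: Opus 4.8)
The plan is to reduce the Corollary to an assembly of the results already in hand, splitting according to the residue of $n$ modulo $4$. When $n\not\equiv 2\bmod 4$, Theorem~\ref{thm:twisted-three-quarters-classification} already says that $\mu_n\colon\W^\iinfty_n\to\sD_n$ is an isomorphism; since $\sD_n$ is free abelian of known rank \cite{O}, this classifies (indeed computes) $\W^\iinfty_n$ in terms of the single invariant $\mu_n$, with no Arf invariant needed. So the only orders demanding attention are $n=4j-2$.

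For $n=4j-2$ I would argue from the defining short exact sequence
\begin{equation*}
0\longrightarrow\sK^\iinfty_{4j-2}\longrightarrow\W^\iinfty_{4j-2}\xrightarrow{\ \mu_{4j-2}\ }\sD_{4j-2}\longrightarrow 0,
\end{equation*}
in which $\sK^\iinfty_{4j-2}$ is by definition the kernel of $\mu_{4j-2}$ (and $\mu_{4j-2}$ is onto by the triangle diagram~\eqref{eqn:triangle}). By Definition~\ref{def:Arf-j}, together with the surjectivity of $\alpha^\iinfty_j$ coming from~\eqref{eqn:triangle} and Proposition~\ref{prop:kerEta4j-2}, the higher-order Arf invariant $\Arf_j$ is an isomorphism from $\sK^\iinfty_{4j-2}$ onto $(\z\otimes\sL_j)/\Ker\alpha^\iinfty_j$. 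Hence a class in $\W^\iinfty_{4j-2}$ is pinned down by $\mu_{4j-2}$ of the class together with $\Arf_j$ of its difference from any lift of that image; in particular, for a link $L$ bounding an order $4j-2$ twisted Whitney tower, $[L]=0\in\W^\iinfty_{4j-2}$ if and only if $\mu_{4j-2}(L)=0$ (so that $[L]\in\sK^\iinfty_{4j-2}$) and then $\Arf_j([L])=0$. Combined with the previous paragraph this gives the first assertion: $\W^\iinfty_n$ is classified by $\mu_n$ and, for $n=4j-2$, by $\Arf_j$ in addition.

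For the ``in particular'' statement I would use the associated-graded property recalled in section~\ref{subsec:twisted-filtration-classification}: $L$ lies in $\bW^\iinfty_{q+1}$ if and only if $L\in\bW^\iinfty_q$ and $[L]=0\in\W^\iinfty_q$. Every link lies in $\bW^\iinfty_0$, so iterating this equivalence for $q=0,1,\dots,n$ shows that $L$ bounds an order $n+1$ twisted Whitney tower if and only if $[L]=0\in\W^\iinfty_q$ for all $q\leq n$ (at each stage $\mu_q$ is defined on $[L]$ precisely because $L$ already bounds an order $q$ twisted Whitney tower, by Theorem~\ref{thm:Milnor invariant}). By the first two paragraphs, this vanishing in every graded piece is equivalent to $\mu_q(L)=0$ for all $q\leq n$ together with $\Arf_j([L])=0$ for every $j$ with $4j-2\leq n$, i.e.\ to the Milnor and higher-order Arf invariants of $L$ vanishing up to order $n$.

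The genuine content here lies entirely in the cited results --- the twisted order-raising obstruction theory (Theorem~\ref{thm:twisted-order-raising}), the identity $\mu_n=\eta_n\circ\tau^\iinfty_n$ (Theorem~\ref{thm:Milnor invariant}), the resolution of Levine's conjecture underlying Theorem~\ref{thm:twisted-three-quarters-classification}, and the kernel computation of Proposition~\ref{prop:kerEta4j-2}. The only real care in assembling them is bookkeeping: which invariant is available at which stage (the Arf invariant $\Arf_j$ only makes sense once $\mu_{4j-2}$ has been seen to vanish), and the fact that the target of $\Arf_j$ may a priori be a proper quotient of $\z\otimes\sL_j$ --- whether $\Ker\alpha^\iinfty_j$ is nonzero is the open higher-order Arf conjecture --- which is harmless for the classification statement but is the point one must be careful not to overstate.
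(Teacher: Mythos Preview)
Your proposal is correct and follows essentially the same approach as the paper, which simply records that the corollary is assembled from the triangle diagram~\eqref{eqn:triangle}, Theorem~\ref{thm:twisted-three-quarters-classification}, Proposition~\ref{prop:kerEta4j-2}, and Definition~\ref{def:Arf-j}. You have supplied the routine bookkeeping the paper leaves implicit, including the case split on $n\bmod 4$, the short exact sequence argument at $n=4j-2$, and the iteration through the associated graded for the ``in particular'' clause.
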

For the case $j=1$, the classical Arf invariants of the link components correspond to $\Arf_1$ \cite{CST2},
but it remains an open problem whether $\Arf_j$ is non-trivial for any $j>1$. 
The links $R^\iinfty_{4j-2}( \iinfty-\!\!\!\!\!-\!\!\!\!<^{\,J}_{\,J}\,\,)$ realizing the image of $\Arf_{j}$ can all be constructed as internal band sums of iterated Bing doubles of knots having non-trivial classical Arf invariant \cite{CST2}. Such links (which are all boundary links) are known not to be slice
by work of J.C. Cha \cite{Cha}, providing evidence in support of the following Conjecture:

\begin{conj}[\cite{CST1}]\label{conj:Arf-j}
 $\Arf_{j}:\sK^\iinfty_{4j-2}\to\mathbb Z_2\otimes {\sf L}_{j}$ are isomorphisms for all $j$.
\end{conj}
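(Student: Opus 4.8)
This statement is an open conjecture, so what follows is a strategy rather than a complete proof. Since $\alpha^\iinfty_j\colon\mathbb{Z}_2\otimes\sL_j\sra\sK^\iinfty_{4j-2}$ is surjective by construction, and $\Arf_j$ is by definition the inverse of the isomorphism it induces on $(\mathbb{Z}_2\otimes\sL_j)/\Ker\alpha^\iinfty_j$, the conjecture is equivalent to the assertion that $\alpha^\iinfty_j$ is \emph{injective}. Unwinding this through Proposition~\ref{prop:kerEta4j-2} and the triangle diagram \eqref{eqn:triangle}, it is in turn equivalent to the realization map $R^\iinfty_{4j-2}$ being injective on $\Ker(\eta_{4j-2})$, hence to the classes $[L_J]\in\W^\iinfty_{4j-2}$ being $\mathbb{Z}_2$-linearly independent, where $L_J$ denotes the link realizing the generator $1\otimes J$ of $\Ker(\eta_{4j-2})$ from Proposition~\ref{prop:kerEta4j-2} (an internal band sum of iterated Bing doubles of a knot of Arf invariant one, as in \cite{CST2}) and $J$ runs over a basis of $\sL_j$. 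By the twisted case of Theorem~\ref{thm:clasper-concordance} this becomes a purely $3$-dimensional assertion: no nontrivial band sum of the $L_J$ is twisted $C_{4j}$-concordant to the unlink. The whole problem therefore reduces to constructing a $\mathbb{Z}_2\otimes\sL_j$-valued link invariant $\Phi_j$ that is (i) additive under band sum, (ii) invariant under twisted $C_{4j}$-concordance, i.e.\ under concordance and under surgery on any clasper whose trees all have framed degree $\geq 4j$ (or twisted degree $\geq 2j$), and (iii) satisfies $\Phi_j(L_J)=1\otimes J$.

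Two complementary routes present themselves for building $\Phi_j$. \textbf{Route A: refine the intersection invariant.} One would introduce a finer invariant $\widetilde\tau$ of order $4j-2$ twisted Whitney towers, valued in a group $\widetilde\cT$ surjecting onto $\cT^\iinfty_{4j-2}$ but retaining a $\mathbb{Z}_2\otimes\sL_j$ worth of extra data --- for instance by recording the twisting of the relevant clean Whitney disks in $\mathbb{Z}_4$ rather than collapsing it through the interior-twist relation $2\cdot J^\iinfty=\langle J,J\rangle$, or by attaching a quadratic refinement to the framing data in the spirit of the form-theoretic description of $\Arf_1$ in \cite{CST2}. One then has to re-run the proof of the order-raising theorem, Theorem~\ref{thm:twisted-order-raising}, verifying that each geometric move it uses (finger moves, Whitney moves, boundary twists, the geometric realization of IHX) respects $\widetilde\tau$, so that $\widetilde\tau$ descends to an invariant $\Phi_j$ on $\W^\iinfty_{4j-2}$; finally one computes $\widetilde\tau$ directly on the explicit iterated-Bing-double model of $L_J$. \textbf{Route B: upgrade non-sliceness.} The links $L_J$ are boundary links whose Cheeger--Gromov $\rho$-invariants are bounded away from $0$ by Cha's work \cite{Cha}. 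Here one would need to show that these lower bounds survive not just concordance but degree $\geq 4j$ clasper surgery --- estimating the change of $\rho$ across a single such clasper move in terms of its degree, uniformly over the finitely many claspers occurring in a $C_{4j}$-concordance --- and then extract from these $\rho$-invariants, using several representations to separate the various $L_J$ and their band sums, a $\mathbb{Z}_2\otimes\sL_j$-valued reduction that is additive and still detects each $L_J$.

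The main obstacle, in either approach, is exactly the invariance demanded in (ii); this is precisely the content that has kept the conjecture open. In Route A every relation defining $\cT^\iinfty_{4j-2}$ was forced by an honest geometric move, so any candidate refinement $\widetilde\cT$ must still be invariant under all of those moves, and the genuine danger is that the geometric order-raising process in fact trivializes the $\mathbb{Z}_2\otimes\sL_j$ summand, in which case $\widetilde\tau$ cannot exist and $\Arf_j$ really does vanish. In Route B the $\rho$-invariants are not of finite type, so nothing a priori controls their behavior under a single degree-$4j$ clasper surgery, and making Cha's estimates quantitative and uniform under clasper moves is the crux. I would expect the algebraic route to be the more tractable concrete attack, guided by the case $j=1$ --- where $\Arf_1$ equals the sum of the classical Arf invariants of the components, itself a concordance invariant and the mod $2$ reduction of a finite-type invariant --- so that one is effectively searching for a ``higher-order $v_2$'': a finite-type invariant, independent of the Milnor invariants, of the order needed to detect $L_J$ while having a mod $2$ reduction that is a concordance invariant. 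No such invariant is presently known, which is why the statement remains a conjecture.
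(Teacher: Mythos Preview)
You have correctly identified that this statement is an open conjecture, and indeed the paper offers no proof: it is simply recorded as Conjecture~\ref{conj:Arf-j}, imported from \cite{CST1}, with the explicit remark that ``it remains an open problem whether $\Arf_j$ is non-trivial for any $j>1$.'' So there is nothing to compare your proposal against --- the paper's ``proof'' is the empty one.

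Your reformulation is accurate: the conjecture is equivalent to injectivity of $\alpha^\iinfty_j$, hence to injectivity of $R^\iinfty_{4j-2}$ restricted to $\Ker(\eta_{4j-2})$, and your translation via Theorem~\ref{thm:clasper-concordance} into a statement about twisted $C_{4j}$-concordance is a nice use of the paper's main result. Your two proposed routes are reasonable sketches of the known landscape, and your diagnosis of the obstacle --- that invariance under the full collection of geometric moves in the order-raising argument is exactly what makes a refined $\widetilde\tau$ hard to construct, and that $\rho$-invariants are not controlled under clasper surgery --- is on target. One small caution on Route~A: the quadratic-refinement idea was in fact pursued in \cite{CST4}, and it does \emph{not} by itself produce the missing invariant; the universal quadratic form there recovers exactly $\cT^\iinfty_{4j-2}$ and no more. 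So any genuine refinement would have to go beyond the framing/quadratic data already encoded in the twisted theory.
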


Conjecture~\ref{conj:Arf-j} would imply that  $\W_{4j-2}^\iinfty\cong \cT^\iinfty_{4j-2} \cong (\Z_2 \otimes \sL_j) \oplus \sD_{4j-2}$ where the second isomorphism (is non-canonical and)
already follows from Proposition~\ref{prop:kerEta4j-2}.

\subsection{The $k$-repeating (labeled) free Lie algebra}\label{subsec:k-free-lie}
Recall from section~\ref{subsec:intro-Milnor-review} that $\sL=\oplus_n \sL_n$ denotes the free $\Z$-Lie algebra on $\{X_1,X_2,\ldots,X_m\}$, 
which we identify with the additive abelian group on rooted trees
modulo IHX and self-annihilation relations,
with the subgroup $\sL_n$ of degree $n$ brackets corresponding to the subgroup of order $n-1$ rooted trees.
Here the IHX relation corresponds to the Jacobi identity, and the self-annihilation relations set a tree equal to zero if it contains a sub-tree of the form $(J,J)$.

Define the \emph{$k$-repeating free Lie algebra} $\sL^k=\oplus_n \sL^k_n$ on the $X_i$ 
to be the quotient of
$\sL$ by the relations which set equal to zero all rooted trees of multiplicity $>k$.
So $\sL^k_n$ is the subgroup of $\sL_n$ spanned by order $n-1$ rooted trees (degree $n$ brackets) of multiplicity $\leq k$. 


Elaborating on the identification of $\sL$ with rooted trees modulo IHX and self-annihilation, we now identify $\sL_1\otimes\sL_{n+1}$ with the abelian group on order $n$ rooted trees with \emph{roots labeled} by an index from $\{1,2,\ldots,m\}$,
modulo IHX and self-annihilation relations. We refer to a rooted tree whose root vertex is labeled by an index from $\{1,2,\ldots,m\}$ as a \emph{root-labeled} tree.
Define $(\sL_1\otimes\sL_{n+1})^k$ to be the quotient
of $\sL_1\otimes\sL_{n+1}$ by the relations which set equal to zero all root-labeled trees of multiplicity $>k$.

\subsection{$k$-repeating quotients}\label{subsec:k-nilpotent-quotients}
For a fixed set of generators of a group $G$, define the \emph{$k$-repeating quotient} $G^k$ to be the quotient of $G$ by the subgroup generated by arbitrary length iterated commutators 
of the generators with multiplicity $>k$, which by basic properties of commutators is a normal subgroup of $G$.

Note that for $F$ the free group on $\{x_1,x_2,\ldots,x_m\}$ the (multiplicative) lower central quotient $F_{n+1}^k/F^k_{n+2}$ of the $k$-repeating quotient $F^k$ is isomorphic to the (additive) subgroup $\sL^k_{n+1}<\sL_{n+1}$.

\subsection{$k$-repeating Milnor invariants}\label{subsec:k-rep-milnor-invariants}

%

In this section we define the first non-vanishing total $k$-repeating order $n$ Milnor invariant $\mu^k_n(L)$, 
which, roughly speaking, differs from $\mu_n(L)$ as in section~\ref{subsec:intro-Milnor-review} in that $\mu^k_n(L)$
ignores all iterated commutators of multiplicity $>k$.

For a link $L=\cup_{i=1}^m L_i\subset S^3$ with $G:=\pi_1(S^3\setminus L)$ we have the following presentation 
for the $(n+2)$th lower central quotient $G/G_{n+2}$: 
$$
G/G_{n+2}\cong \langle x_1,x_2,\ldots,x_m\,  | \,  [x_i,w_i], F_{n+2} \,\rangle
$$
where $x_i$ is represented by a meridian to $L_i$, with $w_i$ a word in the generators representing a $0$-parallel longitude of $L_i$. 

\textbf{Assumption:} We assume that each $w_i$ is contained in the subgroup of $G/G_{n+2}$ generated by length $(n+1)$ commutators and arbitrary length commutators $c$ such that $r([x_i,c])>k$.

Under this assumption $(G/G_{n+2})^k$
is isomorphic to the free $k$-repeating nilpotent quotient $(F/F_{n+2})^k\cong F^k/F^k_{n+2}$. Via this isomorphism we now consider $w_i\in F^k/F^k_{n+2}$, and denote by 
$l_i^k\in\sL^k_{n+1}<\sL_{n+1}$ the image of $w_i$ in $F_{n+1}^k/F^k_{n+2}\cong\sL^k_{n+1}<\sL_{n+1}$.
The \emph{total $k$-repeating order $n$ Milnor invariant}
$\mu^k_n(L)$ is then defined by
$$
\mu^k_n(L):=\sum_{i=1}^m X_i \otimes l^k_i \in (\sL_1 \otimes
\sL_{n+1})^k.
$$
Note that the vanishing of $\mu^k_q(L)$ for all $q<n$ implies the above Assumption.
This total $k$-repeating order $n$ Milnor invariant $\mu^k_n(L)$ determines all Milnor invariants $\mu_{\cI}(L)$ of \emph{length} $n+2$ where the multi-index $\cI$ has multiplicity $\leq k$. 

The image of $\mu^k_n(L)$ is contained in the kernel $\sD^k_n<\sD_n$ of the (restricted) bracket map $(\sL_1 \otimes
\sL_{n+1})^k\to\sL^k_{n+2}$.

In the case $k=1$, which corresponds to Milnor's original ``non-repeating'' link homotopy $\mu$-invariants, the longitudinal elements $l_i^1<\sL^{1}_{n+1}$ are length $n+1$ commutators in the \emph{Milnor group} of the sublink of $L$ gotten by deleting the $i$th component \cite{M1}.

\subsection{$k$-repeating nilpotent quotients and twisted Whitney towers}\label{subsec:k-rep-w-tower-complement}

\begin{lem}\label{lem:k-rep-w-tower-complement}
Let $L=\cup_{i=1}^m\subset S^3$ be a link bounding a $k$-repeating order $n$ twisted Whitney tower $\cW\subset B^4$, with
$G=\pi_1(S^3\setminus L)$ and $\cG=\pi_1(B^4\setminus \cW)$. 
Then the following three statements hold:
\begin{enumerate}
\item\label{item:H1-free}
The integral first homology $H_1(B^4\setminus \cW)$ is isomorphic to $\oplus_{i=1}^m\Z$, with generators $ x_1,x_2,\ldots,x_m$, 
where each $x_i$ is represented by a meridian to the order $0$ disk $W_i$ with $\partial W_i=L_i$.

\item\label{item:nilpotent-presentation}
The nilpotent quotient $\cG/\cG_{n+2}$ has the presentation:
$$
\cG/\cG_{n+2}\cong \langle x_1,x_2,\ldots,x_m\,  | \,  c_1,c_2,\ldots, c_M, F_{n+2} \,\rangle
$$
where the $x_i$ are as in \emph{statement~(\ref{item:H1-free})}, and each word $c_j$ is an iterated commutator in the $x_i$ of multiplicity $r(c_j)>k$. 

\item\label{item:inclusion-iso}
The inclusion 
$S^3\setminus L \hra B^4\setminus \cW$ induces isomorphisms on the $k$-repeating nilpotent quotients:
$$G^k/G^k_{n+2}\cong \cG^k/\cG^k_{n+2}\cong F^k/F^k_{n+2}.$$

\end{enumerate}

\end{lem}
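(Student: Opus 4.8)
The plan is to deduce all three statements from an explicit $(n+2)$-nilpotent presentation of $\cG$, obtained by adapting the standard analysis of Whitney tower complements in \cite{CST1} while keeping careful track of the multiplicities of the relators. By Lemma~\ref{lem:split-w-tower} I may replace $\cW$ by a split Whitney tower with the same intersection forest, so I will assume $\cW$ is split throughout; as in \cite{CST1}, one checks that the finger moves used to split do not alter the nilpotent quotients at issue.

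For statements~(\ref{item:H1-free}) and~(\ref{item:nilpotent-presentation}): the complement $B^4\setminus\cW$ admits a handle decomposition in which each sheet of $\cW$ contributes a meridional $1$-handle and the $2$-handles impose the relations. A meridian to a Whitney disk $W_{(I_1,I_2)}$, pushed along $W_{(I_1,I_2)}$ to its boundary, becomes a commutator of conjugates of the meridians to $W_{I_1}$ and $W_{I_2}$ (the ``Whitney disk $=$ commutator'' principle), so recursively every Whitney-disk meridian is a redundant generator expressible as an iterated commutator of the order-$0$ meridians $x_1,\dots,x_m$; in particular these normally generate $\cG$ and generate $H_1$. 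Their independence in $H_1(B^4\setminus\cW)$ follows from the linking numbers with the order-$0$ disks $W_i$ (which represent classes in $H_2(B^4,\partial B^4)$, just as for embedded disks), giving statement~(\ref{item:H1-free}). After eliminating the Whitney-disk meridians, the surviving relators correspond to the singularities of $\cW$ not paired by Whitney disks: an unpaired intersection $p\in W_I\cap W_J$ yields a relator which is an iterated commutator bracketed according to $t_p=\langle I,J\rangle$, hence of length $\operatorname{order}(t_p)+2$ and multiplicity $r(t_p)$; a nontrivially twisted Whitney disk $W_J$ yields a relator which is an iterated commutator of multiplicity $r(J^\iinfty)$ and length $\operatorname{order}(\langle J,J\rangle)+2$ (conjugating factors may be dropped since only the normal closure matters). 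Since $\cW$ is a $k$-repeating twisted Whitney tower of order $n$, every framed tree in $t(\cW)$ has order $\geq n$ or multiplicity $>k$, and every $\iinfty$-tree has order $\geq n/2$ or multiplicity $>k$; relators of length $\geq n+2$ lie in $F_{n+2}$ and get absorbed into the ``$F_{n+2}$'' of the presentation, while the remaining relators $c_1,\dots,c_M$ are iterated commutators in the $x_i$ of multiplicity $>k$. This is statement~(\ref{item:nilpotent-presentation}).

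For statement~(\ref{item:inclusion-iso}): from statement~(\ref{item:nilpotent-presentation}), passing to the $k$-repeating quotient kills the normal closure of all iterated commutators of the generators of multiplicity $>k$, hence kills every $c_j$, so $\cG^k/\cG^k_{n+2}\cong F^k/F^k_{n+2}$. The inclusion $S^3\setminus L\hra B^4\setminus\cW$ sends a meridian of $L_i$ to the meridian $x_i$ of $W_i$, so the induced map $\phi\colon G\to\cG$ is an isomorphism on $H_1$; since any group is generated, modulo any term of its lower central series, by lifts of a generating set of its abelianization, $\phi$ induces a surjection $G^k/G^k_{n+2}\twoheadrightarrow\cG^k/\cG^k_{n+2}$, and likewise (the meridians of $L$ generating $G$) a surjection $F^k/F^k_{n+2}\twoheadrightarrow G^k/G^k_{n+2}$. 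Composing, $F^k/F^k_{n+2}\twoheadrightarrow G^k/G^k_{n+2}\twoheadrightarrow\cG^k/\cG^k_{n+2}\cong F^k/F^k_{n+2}$ is the map $x_i\mapsto x_i$, i.e.\ the identity; as $F^k/F^k_{n+2}$ is finitely generated nilpotent it is Hopfian, so this composite, being a surjective endomorphism, is an isomorphism, forcing both displayed surjections to be isomorphisms. Hence $G^k/G^k_{n+2}\cong\cG^k/\cG^k_{n+2}\cong F^k/F^k_{n+2}$.

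The main obstacle is statement~(\ref{item:nilpotent-presentation}): writing down the handle (or CW) presentation of the Whitney tower complement precisely enough to certify that the leftover relators are exactly the iterated commutators dictated by the trees in $t(\cW)$, with matching multiplicities and lengths. The case of an order-$n$ Whitney tower is carried out in \cite{CST1}, and the only genuinely new input is the bookkeeping observation that in a $k$-repeating twisted Whitney tower of order $n$ every piece of data allowed to appear in low order (an unpaired intersection or a twisting) has multiplicity $>k$, so that after absorbing the long relators into $F_{n+2}$ the remaining relators are commutators of multiplicity $>k$. A secondary technical point, handled as in \cite{CST1}, is the reduction to a split $\cW$.
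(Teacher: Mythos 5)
Your proposal is essentially correct and follows the same overall route as the paper: compute $H_1(B^4\setminus\cW)$, present $\cG/\cG_{n+2}$ by iterated commutators dictated by $t(\cW)$, observe that every low-order datum in a $k$-repeating tower forces multiplicity $>k$, and conclude that the $k$-repeating quotients agree with the free one. The two places where you diverge are worth noting. For statement~(\ref{item:nilpotent-presentation}) the paper does not build a handle presentation directly; it instead invokes Krushkal's presentation lemma (\cite[Lem.13]{Kr2}) reducing the claim to identifying generators of $H_2(B^4\setminus\cW)$ Alexander dual to $H_1(\cW)$, and then computes their attaching maps by Whitney tower--grope duality (\cite[Prop.25]{ST3}, \cite[Lem.3.11]{Cha2}). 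In that formulation one relator arises for each \emph{edge} of each tree in $t(\cW)$ — Clifford tori for the edges through unpaired intersections and circle bundles over ``ovals'' for the edges crossing Whitney-disk boundary arcs — not one relator per unpaired intersection or twisted disk as your phrasing suggests; this makes no difference to the conclusion since all such relators inherit the order and multiplicity of the underlying tree, but it is the precise statement that Krushkal's lemma and \cite{ST3} certify and that a raw handle count would need to reproduce. For statement~(\ref{item:inclusion-iso}) your Hopfian argument (surjectivity $F^k/F^k_{n+2}\twoheadrightarrow G^k/G^k_{n+2}\twoheadrightarrow\cG^k/\cG^k_{n+2}$ with composite the identity on generators) is a cleaner route than the paper's direct claim that the kernel of $G/G_{n+2}\twoheadrightarrow\cG/\cG_{n+2}$ sits inside the preimage of the $c_j$-relations, and I would regard it as an improvement. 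One small caveat: you invoke Lemma~\ref{lem:split-w-tower} and assert the splitting finger moves do not alter the relevant nilpotent quotients — this is true but deserves a sentence of justification (the finger moves introduce only canceling intersection pairs and clean Whitney disks, so the extra relators they create are already trivial modulo $\cG_{n+2}$), whereas the references the paper leans on handle this internally.
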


\begin{proof}
Statement~(\ref{item:H1-free}):
Since $H_1(B^4)=0$, any $1$-cycle in $H_1(B^4\setminus \cW)$ can be represented by a union of circles, each of which is a meridian to a Whitney disk or an order zero disk in $\cW$. Each meridian to a Whitney disk is null-homologous since it bounds a punctured Clifford torus displaying it as a commutator in $\pi_1(B^4\setminus \cW)$ of meridians to the two lower order surfaces paired by the Whitney disk, see \cite[Lem.3.6]{Cha2} or \cite[Fig.14]{CST2}. So $H_1(B^4\setminus \cW)$ is generated by $\{x_1,x_2,\ldots,x_m\}$, where each $x_i$ a meridian to the order zero surface 
$W_i$.

Any relation in $H_1(B^4\setminus \cW)$ is represented by a surface $S$ with $\partial S$ a union of meridians to the $W_i$. But since any closed surface in $B^4$ has zero algebraic intersection with each $W_i$, the relation represented by $S$ restricts for each $i$ to a trivial relation in $x_i$, so 
$H_1(B^4\setminus \cW)\cong\oplus_{i=1}^m\Z$.

Statement~(\ref{item:nilpotent-presentation}):
It suffices (e.g.~\cite[Lem.13]{Kr2}) to show that
the commutators $c_j$ are $2$-cell attaching maps for surfaces representing a generating set for 
$H_2(B^4\setminus \cW)$ which are Alexander dual to a generating set for $H_1(\cW)$.

The generators of $H_2(B^4\setminus \cW)$ are tori which are Alexander duals to the sheet-changing loops in $\cW$ which generate $H_1(\cW)$ (see \cite[Prop.25]{ST3}), and these tori are either Clifford tori around un-paired intersections, or normal circle-bundles over circles around boundary-arcs of Whitney disks (these circles are called ``ovals'' in \cite[Prop.25]{ST3}). The attaching maps for these generators are iterated commutators in the $x_i$ which are determined by $t(\cW)$ as follows.
Let $e$ be an edge of a framed tree $t_p\in t(\cW)$, and let $\Sigma$ be a torus dual to a sheet-changing loop through a transverse intersection point in $\cW$ corresponding to $e$. (If $p\in e$ then $\Sigma$ is a Clifford torus around $p$, and if $p\notin e$ then $\Sigma$ is a normal circle-bundle over an oval around the boundary of a Whitney disk where $e$ changes sheets.)  
By Whitney tower-grope duality \cite[Prop.25]{ST3} (see also \cite[Lem.3.11]{Cha2} and \cite[Lem.33]{CST2}), the attaching map of the $2$-cell of $\Sigma$ is the iterated commutator in the $x_i$ corresponding to the rooted tree gotten by
attaching a new rooted edge to $e$ and creating a new trivalent vertex in what used to be an interior point of $e$.
If $t_p$ has order $n$, then this commutator will be length $n+2$ so the attaching map will be trivial in $\cG/\cG_{n+2}$.
If $t_p$ has order $<n$, then $t_p$ and the resulting attaching map commutator must have multiplicity $>k$. 
The same argument applied to the tree $\langle J,J \rangle$ describes the attaching maps for tori corresponding to edges in a $\iinfty$-tree in $J^\iinfty\in t(\cW)$ (see proof of \cite[Lem.3.11(2)]{Cha2}).

Statement~(\ref{item:inclusion-iso}):
By the previous statement, the inclusion $S^3\setminus L\hra B^4\setminus \cW$ induces an isomorphism on first homology, and the kernel of the induced surjection $G/G_{n+2}\sra \cG/\cG_{n+2}$ is contained in the inverse image of the iterated commutator relations $c_j$ in the presentation of Statement~(\ref{item:nilpotent-presentation}). Since the map on first homology sends the $i$th meridian to the $i$th meridian, the inverse image of each $c_j$ has multiplicity $>k$. It follows that the induced map on $k$-repeating quotients is an isomorphism 
$G^k/G^k_{n+2}\cong(G/G_{n+2})^k\cong (\cG/\cG_{n+2})^k\cong\cG^k/\cG^k_{n+2}$.
The isomorphism $\cG^k/\cG^k_{n+2}\cong F^k/F^k_{n+2}$ follows directly from the presentation of Statement~(\ref{item:nilpotent-presentation}).
\end{proof}

\subsection{$k$-repeating intersection invariants for twisted Whitney towers}\label{subsec:twisted-invariant-def}

Recall the definition of $\cT^\iinfty_n$ from Definition~\ref{def:T-infty} of section~\ref{subsec:W-tower-int-trees}, as well as the definitions of multiplicities for framed trees and $\iinfty$-trees given in section~\ref{subsec:multiplicities}.

\begin{defn}\label{def:k-rep-infty-tree-groups}
For each $n$, the group ${\cT^k_n}^\iinfty$ is defined to be the subgroup of $\cT^\iinfty_n$ spanned by order $n$ trees of multiplicity $\leq k$ and order $n/2$ $\iinfty$-trees of multiplicity $\leq k$.
\end{defn}
Since the relations in $\cT^\iinfty_n$ are homogeneous with respect to tree multiplicities, ${\cT^k_n}^\iinfty$ is a direct summand of  $\cT^\iinfty_n$.

\begin{defn}\label{def:k-rep-tau-infty}
The \emph{$k$-repeating order $n$ intersection invariant}
${\tau^k_n}^{\iinfty}(\cW)$ of a $k$-repeating order
$n$ twisted Whitney tower $\cW$ is defined to be
$$
{\tau^k_n}^{\iinfty}(\cW):=\sum \epsilon_p\cdot t_p + \sum \omega(W_J)\cdot J^\iinfty\in{\cT^k_n}^\iinfty
$$
where the first sum is over all order $n$ intersections $p$ such that $r(t_p)\leq k$, and the second sum is over all order $n/2$
Whitney disks $W_J$ with twisting $\omega(W_J)\in\Z$ such that $r(J^\iinfty)\leq k$.

\end{defn}

We have the following $k$-repeating version of the obstruction theory described in Theorem~\ref{thm:twisted-order-raising}: 
\begin{thm}\label{thm:k-rep-twisted-order-raising}
A link $L$ bounds a $k$-repeating order $n$ twisted Whitney tower $\cW$ with ${\tau^k_n}^\iinfty(\cW)=0\in{\cT^k_n}^\iinfty$ if and only if $L$ bounds a $k$-repeating order $n+1$ twisted Whitney tower.
\end{thm}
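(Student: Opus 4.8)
The plan is to adapt the proof of Theorem~\ref{thm:twisted-order-raising} (Theorem~1.9 of \cite{CST1}) to the $k$-repeating setting; the only genuinely new ingredient is a bookkeeping check on how the standard geometric moves interact with tree multiplicities. The ``if'' direction is immediate: if $L$ bounds a $k$-repeating order $n+1$ twisted Whitney tower $\cW'$, then $\cW'$ is in particular a $k$-repeating order $n$ twisted Whitney tower, and by Definition~\ref{def:k-rep-twisted-w-tower} no framed tree of $t(\cW')$ has order exactly $n$ with multiplicity $\leq k$, and no $\iinfty$-tree has order exactly $n/2$ with multiplicity $\leq k$ (an $\iinfty$-tree of order $n/2$ has order $<(n+1)/2$, so has multiplicity $>k$); hence ${\tau^k_n}^\iinfty(\cW')=0$ by Definition~\ref{def:k-rep-tau-infty}.

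For the ``only if'' direction, I would first invoke Lemma~\ref{lem:split-w-tower} to replace $\cW$ by a split $k$-repeating order $n$ twisted Whitney tower with the same intersection forest, hence the same $k$-repeating intersection invariant. Since the relations defining $\cT^\iinfty_n$ in Definition~\ref{def:T-infty} are homogeneous with respect to multiplicity, ${\cT^k_n}^\iinfty$ is the direct summand of $\cT^\iinfty_n$ spanned by its multiplicity $\leq k$ generators (Definition~\ref{def:k-rep-infty-tree-groups}), and ${\tau^k_n}^\iinfty(\cW)$ records precisely the order $n$ framed trees and order $n/2$ $\iinfty$-trees of $t(\cW)$ that have multiplicity $\leq k$; the remaining trees of $t(\cW)$ all have multiplicity $>k$, are permitted in a $k$-repeating order $n+1$ twisted Whitney tower, and will be left untouched. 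The hypothesis ${\tau^k_n}^\iinfty(\cW)=0$ therefore furnishes a finite sequence of applications of the AS, IHX, and boundary-twist relations (for $n$ odd), or the AS, IHX, symmetry, twisted-IHX, and interior-twist relations (for $n$ even), each involving only multiplicity $\leq k$ trees, reducing the recorded part of $t(\cW)$ to the empty sum.

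I would then realize this sequence geometrically, exactly as in the proof of Theorem~1.9 of \cite{CST1} (and \cite{ST2} for the framed moves): AS by reorienting order zero surfaces or Whitney disks; IHX by the geometric Jacobi construction for Whitney towers; boundary-twist by the boundary-twisting operation converting an order $2j-1$ framed tree into the corresponding order $j$ twisted tree as in Definition~\ref{def:T-infty}; symmetry by reorienting a Whitney disk; twisted-IHX by a Whitney move performed in the presence of a twisted Whitney disk; and interior-twist by introducing a $\pm 1$ self-intersection, which changes a twisting by $\mp 2$. Once the relations have been realized, the recorded trees have been traded, up to oppositely signed cancelling pairs of order $n$ framed trees, for trees of order $\geq n+1$ and (in the odd case) $\iinfty$-trees of order $(n+1)/2$; the cancelling pairs are then eliminated by the usual controlled move of a finger move followed by a Whitney move. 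Each of these moves is a regular homotopy rel $\partial$ or a modification of the interior of the Whitney tower, so the boundary link $L$ is unchanged, and each move creates only trees of order $\geq n+1$ (of arbitrary multiplicity) or $\iinfty$-trees of order $(n+1)/2$ (of multiplicity $\leq k$), all of which are allowed in a $k$-repeating order $n+1$ twisted Whitney tower. Since no move lowers the order of any tree, the resulting Whitney tower $\cW''$ bounded by $L$ has every framed tree of order $\geq n+1$ or of multiplicity $>k$ and every $\iinfty$-tree of order $\geq (n+1)/2$ or of multiplicity $>k$; that is, $\cW''$ is a $k$-repeating order $n+1$ twisted Whitney tower.

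The step I expect to be the main obstacle is the multiplicity bookkeeping underlying the previous paragraph: one must check that each geometric realization of a relation, and each cancellation of an oppositely signed pair, creates \emph{only} trees of order $\geq n+1$ or of multiplicity $>k$, and never a fresh order $\leq n$, multiplicity $\leq k$ tree apart from the ones being manipulated. This should go through because the geometric IHX and cancellation constructions of \cite{CST1,ST2} raise the order of every newly created tree, while the boundary-twist, symmetry, twisted-IHX, and interior-twist moves act on a single tree without altering its univalent labels (hence without changing its multiplicity) except to raise order; and the pre-existing low-order, multiplicity $>k$ debris lies disjointly from the local model of each move. It is also worth noting, as in \cite{CST1}, that the IHX moves merely permute the recorded multiplicity $\leq k$ order $n$ framed trees among themselves, so the geometric process faithfully mirrors the purely algebraic reduction of ${\tau^k_n}^\iinfty(\cW)$ to zero in ${\cT^k_n}^\iinfty$.
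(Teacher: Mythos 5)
Your proposal follows essentially the same route as the paper's proof: both directions are handled by observing that the definitions make the ``if'' direction immediate and that the order-raising constructions from the proof of Theorem~1.9 of \cite{CST1} (AS/IHX/boundary-twist/twisted-IHX/interior-twist realizations and geometric cancellation) can be applied to the multiplicity~$\leq k$ trees while leaving the multiplicity~$>k$ debris untouched, since the relations and geometric moves are homogeneous in multiplicity and raise order. The paper organizes the ``only if'' argument as three explicit steps (pair trees realizing relations, IHX-normalize, geometric cancellation) rather than your ``algebraic reduction, then geometric realization, then cancel pairs,'' but the content and the multiplicity bookkeeping are the same.
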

\begin{proof}
The ``if'' direction holds since any $k$-repeating order $n+1$ twisted Whitney tower is also a $k$-repeating order $n$ twisted Whitney tower.

The ``only if'' direction follows from the observation that the order-raising constructions in the proof of Theorem~\ref{thm:twisted-order-raising} can be applied to all trees of multiplicity $\leq k$ in $t(\cW)$, while ignoring the presence of any lower-order trees in $t(\cW)$ which have multiplicity $>k$, as we summarize here:

The condition ${\tau^k_n}^\iinfty(\cW)=0\in{\cT^k_n}^\iinfty$
means that the trees in $t(\cW)$ of multiplicity $\leq k$ represent $0\in\cT_n^\iinfty$, and any framed trees of order $<n$ or $\iinfty$-trees of order $<n/2$ in $t(\cW)$ must have multiplicity $>k$.

There are three main steps to the proof of Theorem~\ref{thm:twisted-order-raising} in \cite[Sec.4.1]{CST1}, using also \cite[Sec.4]{ST2} and \cite{CST}.
First, controlled modifications of $\cW$ realizing the relations in $\cT_n^\iinfty$ are used to arrange that the order $n$ trees and order $n/2$ trees in $t(\cW)$ all occur in oppositely-signed ``algebraically canceling'' pairs (see the start of section~4 of \cite{CST1}). Since the relations are homogeneous in multiplicities, this step does not create any trees of smaller (or greater) multiplicities and is supported away from all previously existing trees.
In our current $k$-repeating setting, all trees of multiplicity $\leq k$ can be paired while any trees of multiplicity $>k$ do not need to be paired. 
Secondly, the paired trees are converted into pairs of ``simple'' (right- or left-normed) trees by IHX constructions which are again multiplicity-preserving.
In both of these first two steps all intersections are completely controlled, and the constructions can be assumed to be supported away from any trees of multiplicity $>k$. 

The final third step converts algebraic cancellation into ``geometric cancellation'', which means that a new layer of order $n+1$ Whitney disks can be constructed for the pairs of order $n$ intersections, and the pairs of order $n/2$ twisted Whitney disks can be combined into single framed Whitney disks.
In this step the new layer of order $n+1$ Whitney disks have uncontrolled intersections, but all of these new intersections are of order $\geq n+1$. And the construction combining the twisted Whitney disk pairs into framed Whitney disks (Figures 21--22 in \cite[Sec.4.1]{CST1}) creates only new twisted Whitney disks of order $>n/2$, which are supported near the original twisted Whitney disk pairs, along with intersections of order $\geq n$ among these new twisted Whitney disks. 
\end{proof}

\subsection{The $k$-repeating twisted Whitney tower filtration}\label{subsec:k-rep-twisted-filtration}
Denote by ${\bW_n^k}^\iinfty$ the set of $m$-component framed links in $S^3$ which bound $k$-repeating order $n$ twisted Whitney towers in $B^4$; and let 
${\W_n^k}^\iinfty$ denote the quotient of ${\bW_n^k}^\iinfty$ by the equivalence relation of \emph{$k$-repeating order $n+1$ twisted Whitney tower concordance}.
 As a consequence of the $k$-repeating twisted Whitney tower obstruction theory (Theorem~\ref{thm:k-rep-twisted-order-raising}), 
 the component-wise band-sum operation makes ${\W_n^k}^\iinfty$ into a finitely generated abelian group, with a surjective $k$-repeating twisted realization map ${R^k_n}^\iinfty : {\cT_n^k}^\iinfty \to{\W_n^k}^\iinfty$. This follows from the $k$-repeating analogue of \cite[Sec.3]{CST1}.

\subsection{The $k$-repeating summation maps $\eta^k_n$}\label{subsec:k-rep--eta-map}
The $k$-repeating summation maps $\eta^k_n:{\cT^k_n}^\iinfty\rightarrow \sD^k_n$ are defined as the restrictions
of the maps $\eta_n:\cT_n^\iinfty\rightarrow \sD_n$ of Definition~\ref{def:eta} in section~\ref{subsec:eta-map}.

The following $k$-repeating version of Theorem~\ref{thm:Milnor invariant} will yield in the subsequent section a $k$-repeating analogue of the diagram \eqref{eqn:triangle} from section~\ref{subsec:twisted-filtration-classification}:

\begin{thm}\label{thm:k-repeating-mu} 
If $L$ bounds a
$k$-repeating twisted Whitney tower $\cW$ of order $n$, then $\mu^k_q(L)=0$ for $q<n$, and
\[
\mu^k_n(L)=\eta^k_n({\tau^k_n}^\iinfty(\cW))  \in \sD^k_n.
\]
\end{thm}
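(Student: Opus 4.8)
The plan is to deduce Theorem~\ref{thm:k-repeating-mu} from the non-repeating statement Theorem~\ref{thm:Milnor invariant}, using the compatibility results of Lemma~\ref{lem:k-rep-w-tower-complement} as the geometric input and the functoriality of the relevant algebraic maps as the bookkeeping. First I would recall the two ``quotient maps'' at play: the passage $\cT_n^\iinfty\to{\cT_n^k}^\iinfty$ which (since $\cT_n^\iinfty$ is graded by multiplicity and ${\cT_n^k}^\iinfty$ is the summand of multiplicity $\leq k$) is just projection onto a direct summand; and the passage $\sD_n\to\sD_n^k$ coming from $(\sL_1\otimes\sL_{n+1})\to(\sL_1\otimes\sL_{n+1})^k$, which kills all root-labeled trees of multiplicity $>k$. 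By Definition~\ref{def:k-rep--eta-map}, $\eta_n^k$ is literally the restriction of $\eta_n$ to the multiplicity-$\leq k$ summand, and one checks immediately that $\eta_n$ carries the multiplicity-$>k$ summand of $\cT_n^\iinfty$ into the multiplicity-$>k$ part of $\sL_1\otimes\sL_{n+1}$ (the map $\eta_n$ is multiplicity-preserving on each summand because each term $X_{\ell(v)}\otimes B_v(t)$ has the same $i$-multiplicities as $t$ for every $i$). Hence there is a commuting square relating $(\eta_n,\tau_n^\iinfty)$ to $(\eta_n^k,{\tau_n^k}^\iinfty)$, and the content of the theorem is really that both $\mu^k$ and ${\tau^k}^\iinfty$ are the images of their non-repeating counterparts under these projections.

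The key steps, in order, would be: (1) Observe that a $k$-repeating order $n$ twisted Whitney tower $\cW$ need not be an honest order $n$ twisted Whitney tower, so one cannot directly quote Theorem~\ref{thm:Milnor invariant}; instead invoke Lemma~\ref{lem:k-rep-w-tower-complement}(\ref{item:inclusion-iso}), which says the inclusion $S^3\setminus L\hookrightarrow B^4\setminus\cW$ induces isomorphisms on $k$-repeating nilpotent quotients $G^k/G^k_{n+2}\cong\cG^k/\cG^k_{n+2}\cong F^k/F^k_{n+2}$. (2) Use Lemma~\ref{lem:k-rep-w-tower-complement}(\ref{item:H1-free})--(\ref{item:nilpotent-presentation}) to identify the meridional generators $x_i$ and to see that the $i$-th longitude of $L$, pushed into $B^4\setminus\cW$, bounds (a punctured surface built from) the order $0$ disk $W_i$, so that its class in $\cG^k/\cG^k_{n+2}$ is trivial through length $n$; this gives the vanishing $\mu_q^k(L)=0$ for $q<n$ and shows the Assumption of section~\ref{subsec:k-rep-milnor-invariants} is satisfied. (3) Compute $l_i^k\in\sL_{n+1}^k$ directly from $\cW$: the longitude of $L_i$, now homotopic in $B^4\setminus\cW$ to a loop bounding the surgered $W_i$, can be expressed in terms of the meridians of the higher-order Whitney disks and unpaired intersections via the Clifford-torus/oval attaching relations of Lemma~\ref{lem:k-rep-w-tower-complement}(\ref{item:nilpotent-presentation}); reading these off by Whitney-tower--grope duality (\cite[Prop.25]{ST3}, \cite[Lem.33]{CST2}) produces exactly the bracketings that appear in $\eta_n(t_p)$ and $\eta_n(J^\iinfty)$, now taken modulo multiplicity $>k$. (4) Assemble: summing the contributions over all order $n$ trees (and order $n/2$ $\iinfty$-trees) of multiplicity $\leq k$ gives $\sum_i X_i\otimes l_i^k = \eta_n^k({\tau_n^k}^\iinfty(\cW))$, while the contributions from trees of multiplicity $>k$ vanish after projecting to $\sD_n^k$.

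The cleanest way to organize steps (3)--(4) is probably to run the argument from the proof of Theorem~\ref{thm:Milnor invariant} in \cite{CST1} verbatim but carried out in the $k$-repeating quotient throughout: that proof reduces $\mu_n(L)=\eta_n(\tau_n^\iinfty(\cW))$ to the grope-duality description of the longitudes in $\cG/\cG_{n+2}$, and every modification it makes to $\cW$ to split/normalize the tower is multiplicity-preserving (indeed Lemma~\ref{lem:split-w-tower} and the splitting moves do not change $t(\cW)$), so it descends. The only genuinely new point is that the longitude may now also receive contributions from the ``extra'' commutator relations $c_j$ of multiplicity $>k$ in Lemma~\ref{lem:k-rep-w-tower-complement}(\ref{item:nilpotent-presentation}); but these die in $F^k/F^k_{n+2}$ by construction, which is precisely why one works with the $k$-repeating quotient rather than the full nilpotent quotient.

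The main obstacle I anticipate is step (3): making rigorous the claim that the longitude's expression in $\cG^k/\cG^k_{n+2}$ is computed by the same grope-duality combinatorics as in the untwisted, genuinely order-$n$ case, despite the presence of low-order Whitney disks of multiplicity $>k$ in $\cW$. One must check that those low-order ``defect'' Whitney disks and their unpaired intersections contribute only commutators of multiplicity $>k$ to the longitude — equivalently, that the relevant subtrees all inherit the $>k$ multiplicity — which should follow because any tree passing through such a Whitney disk contains its (multiplicity-$>k$) rooted subtree as a subtree, hence itself has multiplicity $>k$. This is exactly the monotonicity-of-multiplicity-under-subtree-containment bookkeeping already used in the proof of Lemma~\ref{lem:k-rep-w-tower-complement}, so it should go through, but it needs to be stated carefully.
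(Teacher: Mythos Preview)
Your proposal is correct and follows essentially the same route as the paper: invoke Lemma~\ref{lem:k-rep-w-tower-complement} for the $k$-repeating nilpotent isomorphisms, compute the longitudes via the Clifford-torus/grope-duality description of the attaching maps (the paper cites \cite[Lem.3.8, Lem.3.9]{Cha2} rather than \cite[Prop.25]{ST3} and \cite[Lem.33]{CST2}, but these are the same ingredients), and observe that the low-order multiplicity-$>k$ trees contribute only commutators of multiplicity $>k$, which die in $F^k/F^k_{n+2}$. Your anticipated ``main obstacle'' is exactly the point the paper isolates in its final paragraph, and your resolution of it is the same as theirs.
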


\begin{proof}
For $G=\pi_1(S^3\setminus L)$ and $\cG=\pi_1(B^4\setminus \cW)$,  Lemma~\ref{lem:k-rep-w-tower-complement} gives isomorphisms $G^k/G^k_{n+2}\cong\cG^k/\cG^k_{n+2}\cong F^k/F^k_{n+2}$, with the first isomorphism induced by inclusion.
Then by (the proof of) \cite[Lem.3.8, Lem.3.9]{Cha2} the link longitudes are represented by the products of iterated commutators of meridians corresponding to the image under the $\eta_n^k$-map of the trees in $t(\cW)$ representing ${\tau^k_n}^\iinfty(\cW)$. 
Since $\cW$ is $k$-repeating order $n$ it follows that
the Assumption of section~\ref{subsec:k-rep-milnor-invariants} is satisfied, so $\mu^k_q(L)=0$ for $q<n$, and each $i$th longitude maps to $l_i^k\in\sL^{k,i}_{n+1}$.

To see that $\mu^k_n(L)=\eta^k_n({\tau^k_n}^\iinfty(\cW))$, observe that the calculation given in \cite[Lem.3.9]{Cha2} of the link longitudes as iterated commutators of meridians determined by the trees in  
$t(\cW)$
holds in $\cG/\cG_{n+2}$ even when $\cW$ is a $k$-repeating order $n$ twisted Whitney tower, rather than an ordinary order $n$ Whitney tower as considered in \cite{Cha2}. So in the current $k$-repeating setting the presence of framed trees which have both multiplicity $>k$ and order $<n$, and $\iinfty$-trees which have 
both multiplicity $>k$ and order $<\frac{n}{2}$, only contribute factors in $\cG/\cG_{n+2}$ to the longitudes which map trivially to $l_i^k\in\sL^{k,i}_{n+1}<\sL^k_{n+1}<\sL_{n+1}$, while the order $n$ framed trees and order $n/2$ $\iinfty$-trees contribute longitude factors in $\cG_{n+1}/\cG_{n+2}$
which map to $X_i\otimes l_i^k\in(\sL_1 \otimes\sL_{n+1})^k$, determining $\mu^k_n(L)$.
\end{proof}

\subsection{The $k$-repeating twisted Whitney tower classification}\label{subsec:k-repeating-twisted-classification}
From Theorem~\ref{thm:k-repeating-mu} we have the following commutative triangle of groups and maps:

\begin{equation}\label{eqn:k-triangle}
    \xymatrix{
{\cT^k_n}^\iinfty \ar@{->>}[r]^{{R^k_n}^\iinfty} \ar@{->>}[rd]_{\eta^k_n} & {\W^k_n}^\iinfty \ar@{->>}[d]^{\mu^k_n}\\
& \sD^k_{n}
}
\tag{$\bigtriangledown^k$}
\end{equation}

It follows from Theorem~\ref{thm:twisted-three-quarters-classification} that the maps $\eta^k_n:{\cT_n^k}^\iinfty \to \sD^k_n$ are isomorphisms for $n\equiv 0,1,3\,\mod 4$, since they are restrictions of the maps $\eta_n$ to the $k$-repeating direct summands.
So the $k$-repeating Milnor invariants $\mu^k_n\colon {\W^k_n}^\iinfty\to \sD^k_n$ and the $k$-repeating twisted realization maps ${R^k_n}^\iinfty : {\cT_n^k}^\iinfty \to{\W_n^k}^\iinfty$ are isomorphisms for these orders.

Restricting to direct summands again, for $k\geq 4$ Proposition~\ref{prop:kerEta4j-2} yields isomorphisms $\Z_2 \otimes \sL^{\lfloor k/4\rfloor}_j\cong\Ker(\eta^k_{4j-2}:{\cT^{k^\iinfty}_{4j-2}}\to\sD^k_{4j-2})$
defined by sending $1\otimes J $ to $ \iinfty-\!\!\!\!-\!\!-\!\!\!\!<^{\,J}_{\,J}\,\,\in{\cT^{k^\iinfty}_{4j-2}}$ 
for rooted trees $J\in\sL^{\lfloor k/4\rfloor}_j$ of order $j-1$, where $\lfloor \,\cdot\,\rfloor$ denotes the floor function.

For $\sK^{k^\iinfty}_{4j-2}$ denoting the kernel of $\mu^k_{4j-2}: {\W^{k^\iinfty}_{4j-2}} \sra \sD^k_{4j-2}$, it follows from the $k$-repeating triangle diagram \eqref{eqn:k-triangle} 
that mapping $1\otimes J$ to 
$R^{k^\iinfty}_{4j-2}( \iinfty-\!\!\!\!\!-\!\!\!\!<^{\,J}_{\,J}\,\,)$ induces a surjection $\alpha_j^{k^\iinfty}: \Z_2 \otimes \sL^{\lfloor k/4\rfloor}_j
\sra \sK^{k^\iinfty}_{4j-2}$, for all $j\geq 1$.
Denoting by $\overline{\alpha^{k^\iinfty}_{j}}$ the induced isomorphism on $(\mathbb Z_2\otimes {\sf L}^{\lfloor k/4\rfloor}_{j})/\Ker \alpha_j^{k^\iinfty}$,
for $k\geq 4$ the \emph{$k$-repeating higher-order Arf invariants} are defined by
$$
\Arf^k_{j}:=(\overline{\alpha_j^{k^\iinfty}})^{-1}:\sK^\iinfty_{4j-2}\to(\mathbb Z_2\otimes {\sf L}^{\lfloor k/4\rfloor}_{j})/\Ker \alpha_j^{k^\iinfty}.
$$

We have the following $k$-repeating analogue of Conjecture~\ref{conj:Arf-j}:
\begin{conj}\label{conj:k-rep-Arf-j} 
$\Arf^k_{j}$ is an isomorphism for all $k$ and $j$.
\end{conj}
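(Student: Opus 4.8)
The plan is to reduce Conjecture~\ref{conj:k-rep-Arf-j} to Conjecture~\ref{conj:Arf-j}, disposing of the case $j=1$ unconditionally. Since $\Arf^k_j=(\overline{\alpha_j^{k^\iinfty}})^{-1}$ is by construction an isomorphism onto $(\z\otimes\sL^{\lfloor k/4\rfloor}_j)/\Ker\alpha_j^{k^\iinfty}$, the conjecture is equivalent to $\Ker\alpha_j^{k^\iinfty}=0$, i.e.\ to injectivity of the surjection $\alpha_j^{k^\iinfty}\colon\z\otimes\sL^{\lfloor k/4\rfloor}_j\sra\sK^{k^\iinfty}_{4j-2}$, and --- using the triangle \eqref{eqn:k-triangle} together with Theorem~\ref{thm:twisted-three-quarters-classification} and Proposition~\ref{prop:kerEta4j-2} --- equivalently to the assertion that the $k$-repeating twisted realization map ${R^k_{4j-2}}^\iinfty\colon{\cT^k_{4j-2}}^\iinfty\to{\W^k_{4j-2}}^\iinfty$ is an isomorphism. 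First I would record the elementary observation that when $k\ge 4j$ one has $\sL^{\lfloor k/4\rfloor}_j=\sL_j$ (a degree-$j$ bracket has multiplicity at most $j$), and that in this range every framed tree of order $<4j-1$ and every $\iinfty$-tree of order $<2j-1$ automatically has multiplicity $\le k$. Hence for $k\ge 4j$ the $k$-repeating notions of order $4j-2$ twisted Whitney tower, of order $4j-1$ twisted Whitney tower concordance, of $\mu^k_{4j-2}$, of $\sK^{k^\iinfty}_{4j-2}$ and of $\Arf^k_j$ all agree verbatim with their non-repeating counterparts, so Conjecture~\ref{conj:k-rep-Arf-j} for such $(k,j)$ \emph{is} Conjecture~\ref{conj:Arf-j} for $j$; the content of the general conjecture is to push the non-repeating statement down to all $k\ge 4$. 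For $j=1$ the same remarks already give ${\W^k_2}^\iinfty=\W^\iinfty_2$ and $\Arf^k_1=\Arf_1$ for every $k\ge 4$, and $\Arf_1$ is the isomorphism $\z\otimes\sL_1\cong\sK^\iinfty_2$ induced by the classical Arf invariants of the link components \cite{CST2}, so the conjecture holds when $j=1$.

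For $j>1$ I would carry out the push-down as follows. Because the defining relations of $\cT^\iinfty_n$ are homogeneous in multiplicity, ${\cT^k_n}^\iinfty$ is a direct summand of $\cT^\iinfty_n$, the maps $\eta^k_n,\mu^k_n,{R^k_n}^\iinfty$ are the restrictions to this summand of $\eta_n,\mu_n,R^\iinfty_n$, and $\Ker\eta^k_{4j-2}\cong\z\otimes\sL^{\lfloor k/4\rfloor}_j$ is precisely the intersection of $\Ker\eta_{4j-2}\cong\z\otimes\sL_j$ with the multiplicity-$\le k$ summand. A nonzero class $\xi\in\Ker\alpha_j^{k^\iinfty}$ would thus produce a link $L$ realizing $\xi$ --- by the constructions of Section~\ref{subsec:Bing-along-trees} an internal band sum of iterated Bing doubles of knots with nontrivial classical Arf invariant --- bounding a $k$-repeating order $4j-1$ twisted Whitney tower concordance to the unlink in which, as in the proof of Theorem~\ref{thm:k-rep-twisted-order-raising}, every tree of multiplicity $>k$ already has order $\ge 4j-1$. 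The crucial step is then to promote this bounding object to an \emph{honest} order $4j-1$ twisted Whitney tower concordance: I would run the order-raising constructions of \cite{CST1} on the multiplicity-$\le k$ summand and then try to absorb the remaining multiplicity-$>k$ singularities, using the colored-link and colored-Whitney-tower formalism of Section~\ref{subsec:colored-links-and-towers}, into an order $4j-1$ twisted Whitney tower concordance for $L$ regarded over a suitably enlarged index set. Feeding the resulting non-repeating class into Conjecture~\ref{conj:Arf-j} over that enlarged index set would force $\xi=0$.

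I expect the genuine obstacle to be exactly this last promotion, and I do not believe it can be made purely formal: recoloring a Whitney tower does not change the multiplicity of a tree all of whose univalent vertices meet the same order-zero disk, so there is no cost-free way to turn a multiplicity-$>k$ configuration into one of lower multiplicity, and the whole difficulty of Conjecture~\ref{conj:Arf-j} for $j>1$ --- the lack of any twisted Whitney tower concordance invariant, beyond the Milnor invariants, capable of detecting the realizing boundary links --- resurfaces here unchanged. Cha's theorem \cite{Cha} shows these links are not topologically slice, but that is much weaker than nontriviality in the order $4j-1$ twisted Whitney tower concordance filtration, which is what would be needed. Thus, realistically, the plan above settles the conjecture only modulo Conjecture~\ref{conj:Arf-j}; a full proof would need the same missing ingredient --- a nontrivial higher-order Arf invariant --- together with a version of the coloring reduction that is uniform in $k$.
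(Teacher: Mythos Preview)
The statement you are attempting to prove is a \emph{conjecture}: the paper does not prove it, and immediately after stating it the authors write ``Regardless of the size of $\Ker\alpha_j^{k^\iinfty}$, we have \ldots'', making explicit that the question is left open. There is therefore no proof in the paper to compare your proposal against.

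Your proposal is not a proof either, and to your credit you say so. The pieces that are correct are (a) the observation that for $k\ge 4j$ the $k$-repeating and ordinary twisted filtrations coincide at order $4j-2$, so in that range Conjecture~\ref{conj:k-rep-Arf-j} is literally Conjecture~\ref{conj:Arf-j}, and (b) the unconditional $j=1$ case via the classical Arf invariant. For $j>1$ and $4\le k<4j$, however, your proposed reduction goes in the wrong direction. Granting Conjecture~\ref{conj:Arf-j} tells you that a nonzero $\xi\in\Z_2\otimes\sL^{\lfloor k/4\rfloor}_j\subset\Z_2\otimes\sL_j$ has nonzero image in $\W^\iinfty_{4j-2}$; what you need is nonvanishing in the \emph{coarser} quotient ${\W^k_{4j-2}}^\iinfty$, and the natural map $\W^\iinfty_{4j-2}\to{\W^k_{4j-2}}^\iinfty$ can a priori have kernel. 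Your ``promotion'' step --- upgrading a $k$-repeating order $4j-1$ twisted Whitney tower to an honest one --- is exactly the missing injectivity of this map on the relevant classes, and your coloring idea does not supply it: the problematic low-order trees of multiplicity $>k$ are mono-labeled, and recoloring cannot lower their multiplicity. So your plan does not settle the conjecture even modulo Conjecture~\ref{conj:Arf-j}; it identifies why the $k$-repeating statement is, if anything, at least as hard as the non-repeating one.
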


Regardless of the size of $\Ker\alpha_j^{k^\iinfty}$, we have the following $k$-repeating analogue of Corollary~\ref{cor:intro-mu-arf-classify-twisted}:
\begin{cor}\label{cor:k-repeating-mu-arf-classify-k-repeating-twisted} 
The groups ${{\sf W}^k_n}^\iinfty$ are classified by $k$-repeating Milnor invariants $\mu^k_n$ and, in addition, $k$-repeating higher-order Arf invariants $\Arf^k_j$ for $n=4j-2$ and $k\geq 4$.

In particular, a link bounds a $k$-repeating twisted Whitney tower of order $n+1$ if and only if its $k$-repeating Milnor invariants and $k$-repeating higher-order Arf invariants vanish up to order $n$.
$\hfill\square$
\end{cor}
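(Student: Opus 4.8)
The plan is to deduce the corollary formally from the commutative triangle \eqref{eqn:k-triangle}, the $k$-repeating obstruction theory (Theorem~\ref{thm:k-rep-twisted-order-raising}) and the longitude formula (Theorem~\ref{thm:k-repeating-mu}), paralleling the proof of Corollary~\ref{cor:intro-mu-arf-classify-twisted} in \cite{CST1}. First I would dispose of the orders $n\equiv 0,1,3\bmod 4$: here $\eta^k_n\colon{\cT^k_n}^\iinfty\to\sD^k_n$ is an isomorphism, being the restriction of $\eta_n$ to the $k$-repeating direct summand (Theorem~\ref{thm:twisted-three-quarters-classification}), so commutativity of \eqref{eqn:k-triangle} together with the surjectivity of ${R^k_n}^\iinfty$ and $\mu^k_n$ forces all three maps in the triangle to be isomorphisms. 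Hence ${\W^k_n}^\iinfty$ is classified by $\mu^k_n$ in these orders, and a link bounding a $k$-repeating order $n$ twisted Whitney tower is trivial in ${\W^k_n}^\iinfty$ precisely when its $k$-repeating Milnor invariant $\mu^k_n$ vanishes.

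For $n=4j-2$ I would first run a short diagram chase in \eqref{eqn:k-triangle}: since ${R^k_{4j-2}}^\iinfty$ is onto and $\mu^k_{4j-2}\circ{R^k_{4j-2}}^\iinfty=\eta^k_{4j-2}$, one obtains $\sK^{k^\iinfty}_{4j-2}=\Ker\mu^k_{4j-2}={R^k_{4j-2}}^\iinfty\bigl(\Ker\eta^k_{4j-2}\bigr)$. For $k\geq 4$, Proposition~\ref{prop:kerEta4j-2} restricted to the $k$-repeating summand identifies $\Ker\eta^k_{4j-2}$ with $\Z_2\otimes\sL^{\lfloor k/4\rfloor}_j$: the $\iinfty$-tree built from two copies of a rooted tree $J$ of order $j-1$ has multiplicity $4\,r(J)$, so it lies in ${\cT^k_{4j-2}}^\iinfty$ exactly when $r(J)\leq\lfloor k/4\rfloor$. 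Thus $\alpha^{k^\iinfty}_j\colon\Z_2\otimes\sL^{\lfloor k/4\rfloor}_j\sra\sK^{k^\iinfty}_{4j-2}$ is the induced surjection and $\Arf^k_j$ the resulting isomorphism onto $(\Z_2\otimes\sL^{\lfloor k/4\rfloor}_j)/\Ker\alpha^{k^\iinfty}_j$. Combined with the short exact sequence $0\to\sK^{k^\iinfty}_{4j-2}\to{\W^{k^\iinfty}_{4j-2}}\to\sD^k_{4j-2}\to 0$ coming from \eqref{eqn:k-triangle}, this shows that $(\mu^k_{4j-2},\Arf^k_j)$ is a complete invariant of ${\W^{k^\iinfty}_{4j-2}}$: two classes with the same $\mu^k$ differ by an element of $\sK^{k^\iinfty}_{4j-2}$, which $\Arf^k_j$ detects. (For $k<4$ one has $\lfloor k/4\rfloor=0$, hence $\sL^0_j=0$, $\sK^{k^\iinfty}_{4j-2}=0$, and $\mu^k_{4j-2}$ is already an isomorphism, which is why the Arf clause is stated only for $k\geq 4$.) This establishes the first assertion of the corollary.

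The ``in particular'' statement then follows by induction on $n$. As in the $k$-repeating analogue of \cite[Sec.3]{CST1} underlying section~\ref{subsec:k-rep-twisted-filtration}, a link $L$ bounds a $k$-repeating order $n+1$ twisted Whitney tower if and only if $L$ bounds a $k$-repeating order $n$ one and $[L]=0\in{\W^k_n}^\iinfty$; since every link bounds a $k$-repeating order $0$ twisted Whitney tower, iterating this down the filtration shows that $L$ bounds a $k$-repeating order $n+1$ twisted Whitney tower if and only if $[L]$ vanishes in ${\W^k_q}^\iinfty$ for every $0\leq q\leq n$. By the two cases treated above, $[L]=0\in{\W^k_q}^\iinfty$ is equivalent to $\mu^k_q(L)=0$ when $q\not\equiv 2\bmod 4$, and to $\mu^k_q(L)=0$ together with $\Arf^k_j(L)=0$ when $q=4j-2$ (the vanishing of $\mu^k_q(L)$ being exactly what puts $[L]$ in $\sK^{k^\iinfty}_{4j-2}$, where $\Arf^k_j$ is defined). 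Hence $L$ bounds a $k$-repeating order $n+1$ twisted Whitney tower precisely when all of its $k$-repeating Milnor and $k$-repeating higher-order Arf invariants vanish up to order $n$.

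I expect the only genuine care needed to be in the multiplicity bookkeeping of the restriction-to-direct-summands steps (chiefly the count $r(J)\leq\lfloor k/4\rfloor$ that identifies $\Ker\eta^k_{4j-2}$ above); the substantive content lies in results already in hand — the $k$-repeating order-raising theorem (Theorem~\ref{thm:k-rep-twisted-order-raising}) and the longitude computation (Theorem~\ref{thm:k-repeating-mu}) that make \eqref{eqn:k-triangle} a commutative triangle of epimorphisms, together with the fact, inherited from Theorem~\ref{thm:twisted-three-quarters-classification} and Proposition~\ref{prop:kerEta4j-2} by passing to summands, that $\eta^k_n$ behaves exactly like $\eta_n$. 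Granting those, the corollary is a formal consequence, which is why it is stated without a separate proof.
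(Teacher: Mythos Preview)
Your proposal is correct and follows exactly the approach the paper intends: the corollary carries only a $\square$ because it is meant to be read off from the discussion immediately preceding it in section~\ref{subsec:k-repeating-twisted-classification} (the triangle \eqref{eqn:k-triangle}, the restriction of Theorem~\ref{thm:twisted-three-quarters-classification} and Proposition~\ref{prop:kerEta4j-2} to $k$-repeating summands, and the definition of $\Arf^k_j$), and you have faithfully spelled out that deduction, including the multiplicity count $r\bigl((J,J)^\iinfty\bigr)=4\,r(J)$ that pins down the $\lfloor k/4\rfloor$.
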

This completes the proof of Theorem~\ref{thm:twisted-self-Ck-equals-Milnor-Arf-equals-tower}.


\end{document}